\documentclass[11pt]{article}
\usepackage{amsmath,amsfonts,amsthm,amssymb}
\usepackage[margin=1in,letterpaper]{geometry}
\usepackage{physics}
\usepackage{graphicx}
\usepackage{blkarray}
\usepackage{kbordermatrix}
\usepackage{amsthm}
\theoremstyle{definition}
\newtheorem{defi}{Definition}[section]
\newtheorem{theorem}[defi]{Theorem}
\newtheorem{corollary}[defi]{Corollary}
\newtheorem{lemma}[defi]{Lemma}
\newtheorem{prop}[defi]{Proposition}
\newtheorem{conj}[defi]{Conjecture}
\newtheorem{claim}[defi]{Claim}
\newtheorem{eg}[defi]{Example}
\usepackage{bm}


\newcommand{\bx}{\bm x}
\newcommand{\by}{\bm y}
\newcommand{\bp}{\bm p}
\newcommand{\bq}{\bm q}
\newcommand{\bc}{\bm c}
\newcommand{\bd}{\bm d}




\title{Generic Global Rigidity in $\ell_p$-Space and the Identifiability of the $p$-Cayley-Menger Varieties}
\author{Tomohiro Sugiyama and Shin-ichi Tanigawa\thanks{Department of Mathematical Informatics, Graduate School of Information Science and Technology, University of Tokyo, 7-3-1 Hongo, Bunkyo-ku, 113-8656,  Tokyo Japan. email: {\tt working-sugiyama@mist.i.u-tokyo.ac.jp, tanigawa@mist.i.u-tokyo.ac.jp}}}

\begin{document}

\maketitle

\begin{abstract}
The celebrated result of Gortler-Healy-Thurston (independently, Jackson-Jord{\'a}n for $d=2$) shows that the global rigidity of graphs realised in the $d$-dimensional Euclidean space is a generic property. 
Extending this result to the global rigidity problem in $\ell_p$-spaces remains an open problem.
In this paper we affirmatively solve this problem  when $d=2$ and $p$ is an even positive integer.

A key tool in our proof is a sufficient condition for the $d$-tangential weak non-defectivity of projective varieties due to Bocci, Chiantini, Ottaviani, and Vannieuwenhoven.  
By specialising the condition to the $p$-Cayley-Menger variety, which is the $\ell_p$-analogue of the Cayley-Menger variety for Euclidean distance, we provide an $\ell_p$-extension of the generic global rigidity theory of Connelly.
As a  by-product of our proof, we also offer a purely graph-theoretical characterisation of the $2$-identifiability of an orthogonal projection of the $p$-Cayley-Menger variety along a coordinate axis of the ambient affine space. 
\end{abstract}
\medskip \noindent {\bf Keywords:} graph rigidity, generic global rigidity, $\ell_p$-spaces, Cayley-Menger varieties, identifiability, tangential weak defectivity

\section{Introduction}
Generic rigidity is the central idea in graph rigidity theory. A classical result due to Gluck~\cite{gluck} and Asimov-Roth~\cite{asimow1978rigidity} states that local rigidity is a generic property, meaning that a graph is 
either locally rigid for almost all locations of vertices in the $d$-dimensional Euclidean space or 
flexible  for almost all locations of vertices in the same space.  
A deeper result due to Gortler-Healy-Thurston~\cite{gortler2010characterizing} (independently, Jackson-Jord{\'a}n~\cite{jackson2005connected} for $d=2$)
shows that global rigidity is also a generic property.
This fact is a cornerstone in recent developments of graph rigidity theory (see, e.g.,~\cite{JW}), and since then  there have been many efforts to extend it  to a wide range of global rigidity problems. However the current achievements in this research direction are not so extensive, and a generic property is known only for variants of Euclidean rigidity such as global rigidity on a cylinder~\cite{jackson2019}, on a flat torus~\cite{KST}, on linear subspaces~\cite{guler2021global,cruickshank2024global}, in pseudo-Euclidean spaces~\cite{GT}, or global rigidity problems under linear measurements such as direction rigidity. 
There are also negative results for other measurement functions such as Euclidean inner product~\cite{JJT16} or Euclidean volume~\cite{Southgate}, where the corresponding global rigidity problem may not necessarily be a generic property. 
Currently, there is no theory that characterizes which measurement functions give a global rigidity problem with the generic property.

In this paper, we consider the global rigidity problem in $\ell_p$-spaces, and show that global rigidity is a generic property in the $\ell_p$-plane if $p$ is an even positive integer.
To the best of our knowledge, this is the first generic global rigidity result for a high-degree polynomial measurement. 
The major difficulty in extending the result of Euclidean rigidity to a general $\ell_p$-space is the lack of the theory of stress matrices.
This theory, originated by Connelly~\cite{connelly2005generic}, extensively uses the fact that the squared Euclidean distance is a quadratic polynomial, and there is no straightforward counterpart argument for non-quadratic distance functions.
We will overcome this difficulty using a tool in \cite{cruickshank2023identifiability}.
Cruickshank, Mohammadi,  Nixon, and the second author~\cite{cruickshank2023identifiability} have recently established a close connection between the global rigidity problem under a general measurement and the identifiability problem of secant varieties. 
In this paper, we provide a more detailed exposition of this connection by specialising our focus to the $p$-Cayley-Menger variety, which is the $\ell_p$-analogue of the well-known Cayley-Menger variety for Euclidean distance.
It turns out that, when we apply a sufficient condition for the $d$-tangential weak non-defectivity  (and the $d$-identifiability) by Borcci, Chiantini, Ottaviani, and Vannieuwenhoven~\cite{BCC,chiantini2014algorithm} to the $p$-Cayley-Menger variety, we obtain an $\ell_p$-analogue of Connelly's sufficient condition.
We show that this condition is also necessary in the $\ell_p$-plane by using the inductive construction of 2-connected redundantly locally rigid graphs by Dewar, Hewetson, and Nixon~\cite{dewar2024uniquely}.

A by-product of our proof is a purely graph-theoretical characterisation of the $2$-identifiability of an axis-parallel projection of the $p$-Cayley-Menger variety. 
While the identifiability of projective varieties has been an active topic in algebraic geometry,  
there seems to be little knowledge on how the identifiability changes when a non-generic projection is applied.
We give a combinatorial and efficiently testable characterisation for when  the $2$-identifiability of  the $p$-Cayley-Menger variety is preserved by a projection along a coordinate axis of the ambient affine space.

Let us also review previous results on graph rigidity in $\ell_p$-space.
The theory of local rigidity in $\ell_p$-space was initiated by Kitson and Power~\cite{kitson2014infinitesimal}, who provided a neat characterisation of local rigidity in the $\ell_p$-plane. 
The result of Kitson and Power has been extended to more general normed spaces~\cite{D21,K15}, 
and the higher dimensional case has been also studied~\cite{dewarkitson2022}.
Global rigidity has not been well understood,
but recently Dewar, Hewetson, and Nixon~\cite{dewar2024uniquely,dewarnixon2022} gave a combinatorial characterisation of graphs that can be globally rigid in a generic realisation in an analytic normed space.
However, it is still not known whether the global rigidity is a generic property beyond Euclidean space. 
We also note that having the generic property provides a significant advantage in the detailed analysis of global rigidity properties, such as global linkedness, see, e.g.,~\cite{jordan2024globally} for results in the Euclidean case.

The paper is organised as follows.
In Section~\ref{sec:rigidity},
we introduce the basic notation and terminology on graph rigidity and then formally state our results.
In Section~\ref{sec:iden}, we explain  the identifiability and tangential weak defectivity, key new tools in our global rigidity analysis,
and prove a Connelly-type sufficient condition for generic global rigidity in general dimensions.
As an application of the tools introduced in Section~\ref{sec:iden}, in Section~\ref{sec:suff} we give a sufficient condition for generic global rigidity in general dimensions in terms of generic local rigidity.
In Section~\ref{sec:plane}, we prove our main result, a characterisation of generic global rigidity in the $\ell_p$-plane.

\medskip
Throughout the paper we use the following basic notations.
We consider a $d$-dimensional real vector space equipped with $\ell_p$-norm,
where the distance between two points 
$\bx=(x_1,\dots, x_d), \by=(y_1,\dots, y_d)\in \mathbb{R}^d$ is given by $\|\bx-\by\|_p:=\sqrt[p]{\sum_{i=1}^d (x_i-y_i)^p}$.
This metric space is called the $d$-dimensional $\ell_p$-space and is denoted by $\ell_p^d$.
When $p=2$, it is a Euclidean space.

Let $\mathbb{F}\in \{\mathbb{R},\mathbb{C}\}$.
For a point or a vector $\bx\in \mathbb{F}^d$,
$(\bx)_k$ denotes the $k$-th coordinate of $x$.
For a point $\bx = (x_1, \ldots, x_d) \in \mathbb{F}^d$ and $k \in (0, \infty)$, we write
$\bx^{k} = (x_1^k, \ldots, x_1^k)$.
A point $\bx$ in $\mathbb{F}^d$ is sometimes regarded as a matrix of size $1\times d$.

For a vector space $W$ and $X\subseteq W$, let $\langle X\rangle$ be the linear span of $X$ in $W$.
For a matrix $A$ and positive integers $i,j$,
the $(i,j)$-th entry of $A$ is denoted by $A[i,j]$.

\section{Rigidity in $\ell_p$-space and main theorems}\label{sec:rigidity}
In this section, we shall first review preliminary results on graph rigidity and then state our main results.
\subsection{Local and global rigidity}\label{subsec:local and global}

A $d$-dimensional {\em framework} is a pair $(G,\bp)$ of a graph $G=(V,E)$ and a map $\bp:V\rightarrow \mathbb{R}^d$.
The map $\bp$ is often referred to as a {\em point configuration}, and it is sometimes regarded as a point in $(\mathbb{R}^d)^n$, where $n=|V|$.
Two $d$-dimensional frameworks $(G,\bp)$ and  $(G,\bq)$ with the same underlying graphs  are said to be {\em equivalent} if 
\[
\|\bp(i)-\bp(j)\|_p=\|\bq(i)-\bq(j)\|_p \qquad (ij\in E(G)), 
\]
and they are {\em congruent} if 
there is an isometry $\iota$ of $\ell_p^d$ such that 
$\bq(i)=\iota (\bp(i))$ for all $i\in V(G)$.
When $p\neq 2$, $\iota$ is 
a translation, a permutation of coordinates,
or a reflection along a coordinate axis.

A framework $(G,\bp)$ is {\em globally rigid} in $\ell_p^d$
if every  framework equivalent to $(G,\bp)$ is congruent,
and it is called {\em locally rigid} if this happens in a small neighbour of $\bp$ in $(\mathbb{R}^d)^{n}$.
An equivalent way to define local rigidity is in terms of continuous motions: a framework $(G,\bp)$ is not locally rigid if and only if 
there is a continuous map $t \mapsto \bp^t\in (\mathbb{R}^d)^{n}$ for $t\in [0,1]$
such that $\bp^0=\bp$ and $(G,\bp^t)$ is equivalent but not congruent to $(G,\bp)$ for $t>0$. See, e.g.,~\cite{kitson2014infinitesimal} for more details.

A key tool to analyse rigidity is the following $\ell_p$-analogue of the rigidity map (or the measurement map).
For a positive integer $d$ and a real number $p$ with $1<p<\infty$, define $f_p^{\times d}: (\mathbb{R}^d)^{n}\rightarrow \mathbb{R}^{n\choose 2}$ by 
\begin{equation}\label{eq:f_d}
(f_p^{\times d}(\bp))_{ij}=\sum_{k=1}^d ( (\bp(i))_k-(\bp(j))_k)^p \qquad (\bp\in (\mathbb{R}^d)^{n}).
\end{equation}
For a graph $G=(V,E)$, define the projection map $\pi_G:\mathbb{R}^{n\choose 2}\rightarrow \mathbb{R}^{E}$ along coordinate axes onto $\mathbb{R}^E$, which is the subspace of $\mathbb{R}^{n\choose 2}$ indexed by the elements in $E$.
Then $\pi_G\circ f_p^{\times d}(\bp)$ is the list of the $p$-th powered edge lengths of $(G,\bp)$ when $p$ is an even integer.
Hence we have the following.
\begin{prop}\label{prop:reformulation}
Let $p$ is an even positive integer with $p \neq 2$ and $(G,\bp)$ be a framework in $\ell_p^d$.
Then $(G,\bp)$ is globally rigid if and only if 
any $\bq$ in the fibre of  $\pi_G\circ f_p^{\times d}(\bp)$ under $\pi_G\circ f_p^{\times d}$  can be obtained from $\bp$ by a translation, a permutation of coordinates, and the reflections along coordinate axes.
\end{prop}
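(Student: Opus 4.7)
The plan is to reduce global rigidity to a statement about fibres of the measurement map and then invoke the description of $\ell_p^d$-isometries recorded in the discussion immediately preceding the proposition.

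First I would verify that when $p$ is an even positive integer, the coordinatewise expression defining $f_p^{\times d}$ agrees with the $p$-th power of the $\ell_p$-norm: for any real $x,y$ and even $p$, $(x-y)^p = |x-y|^p$, hence
\[
(f_p^{\times d}(\bp))_{ij} \;=\; \sum_{k=1}^d \bigl((\bp(i))_k - (\bp(j))_k\bigr)^p \;=\; \|\bp(i)-\bp(j)\|_p^{\,p}.
\]
Because $p>0$ and $\|\cdot\|_p \ge 0$, the map $t \mapsto t^p$ is a strictly increasing bijection on $[0,\infty)$, so equality of $p$-th powers of non-negative edge lengths is the same as equality of edge lengths themselves. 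It follows that $(G,\bp)$ and $(G,\bq)$ are equivalent in $\ell_p^d$ precisely when $\pi_G \circ f_p^{\times d}(\bp) = \pi_G \circ f_p^{\times d}(\bq)$, i.e.\ exactly when $\bq$ lies in the fibre of $\pi_G \circ f_p^{\times d}(\bp)$ under $\pi_G \circ f_p^{\times d}$.

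Second, by the definition of global rigidity, $(G,\bp)$ is globally rigid iff every equivalent framework $(G,\bq)$ is congruent to $(G,\bp)$, i.e.\ $\bq = \iota \circ \bp$ for some isometry $\iota$ of $\ell_p^d$. The paragraph preceding the proposition already records the classical fact that for $p \neq 2$ every such isometry is a composition of a translation, a permutation of coordinates, and reflections along coordinate axes. Combining this description with the fibre reformulation of the previous step yields the claimed characterisation.

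The only substantive input is the classification of isometries of $\ell_p^d$ for $p\neq 2$, which rests on the Mazur--Ulam theorem (real normed-space isometries are affine) together with the identification of the linear isometry group of $\ell_p^d$ for $p \neq 2$ as the hyperoctahedral group of signed permutation matrices. Since these facts are invoked as background, no further obstacle arises and the proof reduces to the two bookkeeping steps above.
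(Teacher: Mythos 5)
Your proposal is correct and follows exactly the route the paper intends: identify $\pi_G\circ f_p^{\times d}$ with the list of $p$-th powered edge lengths (using that $p$ is even and $t\mapsto t^p$ is injective on $[0,\infty)$), and then invoke the classification of $\ell_p^d$-isometries for $p\neq 2$ stated just before the proposition. The paper treats this as immediate and gives no separate proof, so your two bookkeeping steps are precisely the missing justification.
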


\subsection{Generic local rigidity}\label{subsec:local}
It is an NP-hard problem to decide the local/global rigidity of a given framework in a Euclidean space, 
and a common tractable approach is to restrict our attention to generic frameworks. We say that a framework $(G,\bp)$ or a point-configuration  $\bp:V(G)\rightarrow \mathbb{R}^d$ is {\em generic} if the set of coordinates of $\bp$ is algebraically independent over $\mathbb{Q}$. 

For generic frameworks, local rigidity admits the following simple characterisation, which is the adaptation of a result of Gluck~\cite{gluck} or Asimov-Roth~\cite{asimow1978rigidity} to the $\ell_p$-setting. 

\begin{prop}(Kitson and Power \cite{kitson2014infinitesimal})\label{prop:inf}
Suppose $(G,\bp)$ is a generic framework and let $p$ be a real number with $1< p< \infty$ and  $p \neq 2$.
Then $(G,\bp)$ is locally rigid in $\ell_p^d$ if and only if 
$\rank J(\pi_G\circ f_p^{\times d})(\bp)=dn-d$.
\end{prop}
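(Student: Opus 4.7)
The plan is to adapt the classical argument of Gluck and Asimov-Roth to the $\ell_p$-setting. The key observation is that the connected component of the identity of the $\ell_p^d$-isometry group (for $p \neq 2$) is exactly the translation group $\mathbb{R}^d$, while the remaining isometries form a finite group $H$ of coordinate permutations and axial reflections. This dichotomy between a $d$-dimensional continuous part and a finite discrete part is what produces the codimension $d$ in the statement.

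For the upper bound $\rank J(\pi_G \circ f_p^{\times d})(\bp) \leq dn - d$, I would observe that $f_p^{\times d}$ is invariant under diagonal translations $\bp \mapsto \bp + (\bm v, \ldots, \bm v)$; differentiating along these directions exhibits a $d$-dimensional subspace of $\ker J(\pi_G \circ f_p^{\times d})(\bp)$. For the sufficiency direction, suppose the rank equals $dn - d$. By the constant rank theorem, the fibre $(\pi_G \circ f_p^{\times d})^{-1}(\pi_G \circ f_p^{\times d}(\bp))$ is a smooth $d$-dimensional submanifold in a neighbourhood of $\bp$. The translation orbit of $\bp$ is a smooth $d$-dimensional submanifold already contained in this fibre, so the two must coincide locally. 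Hence every framework $(G,\bq)$ equivalent to $(G,\bp)$ with $\bq$ close to $\bp$ is a pure translate of $\bp$ and therefore congruent, i.e. $(G,\bp)$ is locally rigid.

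For the necessity direction, suppose $\rank J(\pi_G \circ f_p^{\times d})(\bp) < dn - d$. Since $\bp$ is generic, the rank is locally constant at $\bp$ (the locus where it drops below the generic value is Zariski-closed and avoided by generic points). The constant rank theorem then produces a smooth fibre of dimension strictly greater than $d$ near $\bp$. Since the translation orbit is only $d$-dimensional, I can pick a smooth arc $t \mapsto \bp^t$ in the fibre with $\bp^0 = \bp$ whose initial velocity does not lie in the tangent space to the translation orbit, giving a continuous family of equivalent frameworks with $\bp^t$ not a translate of $\bp$ for small $t > 0$. To conclude flexibility, one must rule out congruence via the discrete part $H$: the full congruence class of $\bp$ decomposes as a disjoint union of $|H|$ translation orbits (affine $d$-planes in $(\mathbb{R}^d)^n$), and for generic $\bp$ these affine planes are pairwise disjoint, hence at pairwise positive distance. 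Thus for sufficiently small $t$ the configuration $\bp^t$ can only be congruent to $\bp$ via a pure translation, which is excluded by the choice of arc, so $(G,\bp)$ is flexible. The main obstacle is precisely this last step --- carefully separating the continuous and discrete parts of the $\ell_p^d$-isometry group to rule out discrete ``jumps'' of small magnitude --- together with, for non-integer $p$, noting that $f_p^{\times d}$ is defined and real-analytic only on the open set where every coordinatewise difference $(\bp(i))_k - (\bp(j))_k$ is nonzero, a condition that holds automatically at a generic $\bp$ so that all the smooth/analytic statements above apply on a genuine neighbourhood.
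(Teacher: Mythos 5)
The paper does not actually prove Proposition~\ref{prop:inf}: it is imported from Kitson and Power \cite{kitson2014infinitesimal}, so there is no internal proof to compare yours against. Your proposal is the expected Asimov--Roth/Gluck adaptation, and most of it is sound: translation-invariance gives the bound $\rank J(\pi_G\circ f_p^{\times d})(\bp)\le dn-d$; in the sufficiency direction maximal rank is automatically locally constant (lower semicontinuity), so the constant-rank/invariance-of-domain argument identifying the fibre locally with the translation orbit works and does not even need genericity; and your bookkeeping of the isometry group of $\ell_p^d$ (signed coordinate permutations times translations, so that the congruence class of a generic $\bp$ with $n\ge 2$ is a finite union of pairwise disjoint affine $d$-planes with a common direction space, hence at positive mutual distance) correctly rules out congruence via the discrete part in the necessity direction.

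The one genuine gap is the step ``the locus where the rank drops below the generic value is Zariski-closed and avoided by generic points.'' That justification is valid exactly when the measurement map is polynomial with rational coefficients, i.e.\ for integer $p$ (and, as written in (\ref{eq:f_d}), really for even $p$; for odd or non-integer $p$ the correct map uses $\abs{x}^p$, which you implicitly switch to in your final remark). The proposition, however, is stated for all real $p\in(1,\infty)$ with $p\neq 2$. For non-integer $p$ the Jacobian entries involve $\abs{x}^{p-1}$, the relevant minors are real-analytic but not polynomial, and algebraic independence of the coordinates of $\bp$ over $\mathbb{Q}$ does not by itself prevent $\bp$ from lying in the zero set of such a transcendental function; so ``generic $\Rightarrow$ rank locally constant at $\bp$'' needs a separate argument in that regime, and this is precisely the content one is importing from Kitson--Power (who work with regular placements and establish the equivalence there). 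If you only need the statement for even integers $p$ --- which is all this paper ever uses --- your proof is complete; for the full range of $p$ claimed in the proposition you must either restrict the statement or replace the Zariski-closedness argument at this point.
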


The equivalence between two properties in Proposition~\ref{prop:inf}
may not be valid if $(G,\bp)$ is not generic.
A framework $(G,\bp)$ is called {\em infinitesimally rigid} in $\ell_p^d$
if $\rank J(\pi_G\circ f_p^{\times d})(\bp)=dn-d$.

Proposition~\ref{prop:inf} implies that, for a  graph $G$ and a positive integer $d$,
either every generic framework $(G,\bp)$ is locally rigid in $\ell_p^d$ or 
no generic framework $(G,\bp)$ is locally rigid in $\ell_p^d$.
Hence, we may say that a graph $G$ is {\em locally rigid in $\ell_p^d$} if  a/every generic framework $(G,\bp)$ is locally rigid. Since this is a property of graphs, it is natural to ask a combinatorial characterization. For $d = 2$,  Kitson and Power gave a concrete answer. 
\begin{theorem}[Kitson and Power~\cite{kitson2014infinitesimal}]
Suppose $p$ is a real number with $1\leq  p\leq  \infty$ and  $p \neq 2$.
A graph $G$ is locally rigid in the $\ell_p$-plane if and only if 
$G$ is 2-tree-connected, i.e., $G$ contains two edge-disjoint spanning trees.
\end{theorem}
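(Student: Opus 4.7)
The plan is to reduce the statement to a linear-algebra computation via Proposition~\ref{prop:inf}, recognize the resulting linear matroid as the union of two copies of the cycle matroid of $G$, and conclude with the Nash--Williams--Tutte theorem. I shall focus on the case when $p$ is an even positive integer; the remaining cases in $1 \leq p \leq \infty$, $p \neq 2$ are handled by analogous differential calculations in the Kitson--Power framework.

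By Proposition~\ref{prop:inf}, it suffices to show for a generic $\bp$ that $\rank J(\pi_G \circ f_p^{\times 2})(\bp) = 2n - 2$ if and only if $G$ is $2$-tree-connected. A direct computation from~\eqref{eq:f_d} shows that the Jacobian row indexed by $ij \in E$ has the form
\[
r_{ij} \;=\; a_{ij}\, e^{(1)}_{ij} \;+\; b_{ij}\, e^{(2)}_{ij},
\]
where $a_{ij} = p\,((\bp(i))_1 - (\bp(j))_1)^{p-1}$, $b_{ij} = p\,((\bp(i))_2 - (\bp(j))_2)^{p-1}$, and $e^{(k)}_{ij} \in \mathbb{R}^{2n}$ is the signed $\pm 1$ indicator of the endpoints of $ij$ in the $k$-th coordinate block. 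The two coordinate translations always lie in the kernel of the Jacobian, so the rank is bounded above by $2n - 2$.

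For $k = 1, 2$, the matrix whose rows are just $e^{(k)}_{ij}$ is a signed incidence matrix of $G$ realising the cycle matroid $M(G)$. The key step is to identify the linear matroid of the full Jacobian with the matroid union $M(G) \vee M(G)$: a set $F \subseteq E$ of edges is independent in the Jacobian if and only if $F$ admits a partition $F = F_1 \sqcup F_2$ with each $F_k$ a forest in $G$. By the Nash--Williams--Tutte theorem, this matroid union attains its maximum possible rank $2(n - 1)$ if and only if $G$ contains two edge-disjoint spanning trees, yielding the desired equivalence.

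The principal obstacle is justifying the matroid-union identity, since the coefficients $a_{ij}, b_{ij}$ are not free generic parameters but specific polynomial expressions in $\bp$. One approach is to verify that for a generic $\bp$ the family $\{a_{ij}, b_{ij} : ij \in E\}$ is algebraically independent over $\mathbb{Q}$, which reduces the problem to the standard generic-coefficient matroid union theorem. Alternatively, because the Jacobian rank is generically constant and upper-semicontinuous in $\bp$, it suffices to exhibit a single configuration realising the matroid-union rank, which is straightforward by a direct perturbation (choosing for each $e \in F_1$ a placement with $b_e \approx 0$ and vice versa).
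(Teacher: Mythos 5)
The reduction via Proposition~\ref{prop:inf}, the form of the Jacobian rows, and the easy direction of your matroid claim are fine: if the rows indexed by $F$ are independent then a Laplace expansion of a non-vanishing minor along the two coordinate blocks produces a partition of $F$ into two forests, so the Jacobian matroid is always a weak image of $M(G)\vee M(G)$ and the rank never exceeds $2n-2$. The genuine gap is the converse (lower-bound) direction, which is the actual content of the theorem, and neither of your two remedies closes it. Remedy (a) is impossible: $\{a_{ij},b_{ij}:ij\in E\}$ consists of $2|E|$ polynomials in only $2n$ variables, and every $2$-tree-connected graph on $n\geq 3$ vertices has $|E|\geq 2n-2>n$, so this family can never be algebraically independent over $\mathbb{Q}$. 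Remedy (b) also fails: you cannot prescribe per-edge slopes independently, because slopes are determined by vertex coordinates. Concretely, take $G=K_4$; every decomposition into two edge-disjoint spanning trees consists of two Hamiltonian paths, say $T_1=\{12,23,34\}$ and $T_2=\{13,14,24\}$. Writing $u=x_2-x_1$, $v=x_3-x_2$, $w=x_4-x_3$, the requirement that $T_1$-edges be $x$-dominant and $T_2$-edges $y$-dominant forces $|u+v|\ll\max(|u|,|v|)$, $|v+w|\ll\max(|v|,|w|)$ and $|u+v+w|\ll\max(|u|,|v|,|w|)$ simultaneously, which telescopes to a contradiction (the first two give $v\approx-u$, $w\approx u$, hence $u+v+w\approx u$). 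So no placement of the required kind exists, for any tree decomposition of $K_4$ and any $p$.

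A further sanity check shows the approach cannot be complete as outlined: nothing in your argument uses $p\neq 2$, yet for $p=2$ the generic rank of $K_4$ in the plane is $5<6=2n-2$, while the rank of $M(G)\vee M(G)$ is $6$. Hence identifying the generic Jacobian matroid with the two-fold union of the cycle matroid genuinely depends on $p\neq 2$ and is precisely the nontrivial step you have assumed. Kitson and Power establish it not by a generic-coefficient or perturbation argument but by an inductive construction of $(2,2)$-tight graphs, verifying directly that each construction move preserves independence of the $\ell_p$ rigidity matrix for suitable frameworks (and the endpoint cases $p=1,\infty$ in the stated range are not even covered by Proposition~\ref{prop:inf}, which requires $1<p<\infty$, so they need the separate polyhedral-norm analysis). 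Note also that the paper itself does not prove this statement; it is quoted from Kitson--Power, so your sketch would need the above step supplied before it could serve as a proof.
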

In general, a graph is said to be {\em $k$-tree-connected} if it contains $k$ edge-disjoint spanning trees.
Kitson and Power also conjectured that $d$-tree-connectivity characterises the $d$-dimensional local rigidity.
This problem is still open.

\subsection{Generic global rigidity in Euclidean space}\label{subsec:global in l2}
Before moving to global rigidity in $\ell_p$-space, we shall quickly review results in Euclidean space.

The central approach to the generic global rigidity problem is the theory of self-stresses due to Connelly~\cite{connelly2005generic}.
Given a $d$-dimensional framework $(G,\bp)$, a map $\omega:E(G)\rightarrow \mathbb{R}$ is called a {\em self-stress} (in Euclidean space) if 
it satisfies the following equilibrium condition:
\begin{equation}\label{eq:equilibrium}
\sum_{v:uv\in E(G)} \omega(ij)(\bp(j)-\bp(i))=0 \qquad (u\in V(G)). 
\end{equation}
A {\em stress matrix} of a framework $(G,\bp)$ is the Laplacian matrix of $G$ weighted by a self-stress. In general, for a graph $G$ with the vertex set $V=\{1,2,\dots, n\}$ and an edge weight $\omega:E(G)\rightarrow \mathbb{R}$, the weighted Laplacian $L_{G,\omega}$ is an $n\times n$ symmetric matrix with the entries defined by 
\[
L_{G,\omega}[i,j]=\begin{cases}
\sum_{k:ik\in E(G)} \omega(ik) & (i=j) \\
-\omega(ij) & (i\neq j, ij\in E(G)) \\
0 & (i\neq j, ij\notin E(G)).
\end{cases}
\]

Weighted Laplacians and the equilibrium condition (\ref{eq:equilibrium}) are closely related. Indeed, given $(G,\bp)$ and $\omega:E(G)\rightarrow \mathbb{R}$, $\omega$ is a self-stress of $(G,\bp)$ if and only if 
each coordinate vector of $(G,\bp)$ (that is, the $n$-dimensional vector obtained by aligning $(\bp(1))_k, (\bp(2))_k,\dots, (\bp(n))_k$) is in the kernel of $L_{G,\omega}$.
Moreover, any Laplacian has the all-one vector in its kernel.
Therefore, for any self-stress $\omega$ of $(G,\bp)$, 
the dimension of the kernel of $L_{G,\omega}$ is at least one plus the dimension of the affine span of the points of $\bp$.
In particular, if $(G,\bp)$ is generic and $n\geq d+1$, then 
$\rank L_{G,\omega}\leq n-(d+1)$.

\begin{theorem}[Gortler-Healy-Thurston~\cite{gortler2010characterizing}]\label{thm:GHT}
A generic framework $(G,\bp)$ in the $d$-dimensional Euclidean space with $n \geq d+2$ vertices
is globally rigid if and only if 
$(G,\bp)$ has a self-stress $\omega$ such that 
$\rank L_{G,\omega}=n-(d+1)$.
\end{theorem}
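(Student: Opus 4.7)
The plan is to prove the two directions separately. For the sufficient direction, I would follow Connelly's linear-algebraic argument based on the stress matrix; for the necessary direction, the argument is substantially deeper and uses the algebraic geometry of the Cayley-Menger variety together with the fact that it is defined over $\mathbb{Q}$.

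For sufficiency, suppose $\omega$ is a self-stress of $(G,\bp)$ with $\rank L_{G,\omega}=n-(d+1)$. Following Connelly, one first shows that for any framework $(G,\bq)$ equivalent to $(G,\bp)$, each coordinate vector of $\bq$ must lie in $\ker L_{G,\omega}$; this uses the equilibrium identity $L_{G,\omega}\bp=0$ together with the fact that the $L_{G,\omega}$-weighted total squared edge length is preserved under edge-length equivalence. Since the rank condition forces this kernel to have dimension exactly $d+1$ and, by genericity of $\bp$ and $n\geq d+2$, to coincide with the linear span of the all-ones vector together with the coordinate vectors of $\bp$, each coordinate vector of $\bq$ is an affine combination of coordinate vectors of $\bp$. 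Hence $\bq(v)=A\bp(v)+c$ for some $d\times d$ matrix $A$ and $c\in \mathbb{R}^{d}$; the edge-length equivalence together with the genericity of $\bp$ then forces $A$ to be orthogonal, giving the desired congruence.

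For the necessary direction, suppose $(G,\bp)$ is generic and globally rigid. The space of self-stresses is precisely the cokernel of the rigidity matrix $J(\pi_{G}\circ f_{2}^{\times d})(\bp)$, so the existence of a stress of rank $n-(d+1)$ is an infinitesimal-algebraic condition on the local geometry of the Cayley-Menger variety at $f_{2}^{\times d}(\bp)$. The plan is to argue by contrapositive: if every self-stress has rank strictly less than $n-(d+1)$, then the tangent-space data at $f_{2}^{\times d}(\bp)$ is insufficient to separate $\bp$ from another preimage in the fibre, and the fact that both the variety and $\bp$ are defined over $\mathbb{Q}$ (the latter in the sense of algebraic independence) allows one to extract a real non-congruent equivalent framework via a Galois-conjugation argument, contradicting global rigidity.

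The main obstacle is the necessity direction, specifically the step that converts the infinitesimal (stress-rank) deficiency into a genuinely non-congruent equivalent framework; this requires delicate control over the algebraic-geometric structure of the measurement variety near a generic point and constitutes the principal technical content of Gortler-Healy-Thurston. A conceptually cleaner alternative to Connelly's sufficient argument, matching the later sections of this paper, is to rephrase it as a $d$-identifiability question for a coordinate projection of the Cayley-Menger variety and to apply the Bocci-Chiantini-Ottaviani-Vannieuwenhoven criterion for $d$-tangentially weakly non-defective varieties; this yields a reusable tool that adapts to the $\ell_{p}$ setting studied in the remainder of the paper.
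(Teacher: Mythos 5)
The paper never proves Theorem~\ref{thm:GHT}; it is imported as a black box from \cite{gortler2010characterizing} (and used here only via its consequence that Euclidean global rigidity is generic), so there is no internal proof to compare against and your attempt must stand on its own. As written, it has a genuine gap in the sufficiency direction. You justify the key step --- that every coordinate vector of an equivalent configuration $\bq$ lies in $\ker L_{G,\omega}$ --- by combining $L_{G,\omega}\bp=0$ with preservation of the stress energy $\sum_{ij\in E}\omega(ij)\|\bq(i)-\bq(j)\|^{2}$. But preservation of that energy only yields $\sum_{k}\bq_{k}^{\top}L_{G,\omega}\bq_{k}=0$, and this forces $L_{G,\omega}\bq_{k}=0$ only when $L_{G,\omega}$ is positive semidefinite; a maximal-rank stress matrix of a generic framework is in general indefinite, so the step fails (this is exactly where generic global rigidity parts ways with Connelly's super-stability, which does assume a PSD stress). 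The correct argument is Connelly's genericity lemma: since $\bp$ is generic, $f_{2}^{\times d}(\bp)$ is a smooth point of the (irreducible) measurement variety, the image of the rigidity matrix at any equivalent $\bq$ is contained in the tangent space of that variety at the common image point, which equals the image of the rigidity matrix at $\bp$; taking orthogonal complements shows the stress space of $(G,\bp)$ is contained in that of $(G,\bq)$, and only then do the coordinate vectors of $\bq$ fall into $\ker L_{G,\omega}$. You also assert without argument that the resulting affine map is orthogonal; that needs the separate ``conic at infinity'' lemma (for generic $\bp$ admitting a rank-$(n-d-1)$ stress, the edge directions do not lie on a conic at infinity), which is a real step in Connelly's proof, not a formality.

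For necessity you offer only a contrapositive gesture (tangent-space deficiency plus a Galois-conjugation argument), which is far from a proof: the actual content of Gortler--Healy--Thurston involves the shared stress kernel of the fibre, an analysis of the fibres of the measurement map near a generic point, and a careful argument producing a genuinely non-congruent real equivalent realisation; none of that is reconstructed here, as you acknowledge. Your closing remark is, however, consistent with how this paper operates: the authors do not reprove Theorem~\ref{thm:GHT}, and the sufficiency machinery they develop (Proposition~\ref{prop:iden_global}, Proposition~\ref{prop:twnd_iden}, Lemma~\ref{lem:COV_test}, Theorem~\ref{thm:COV_test}) is precisely the identifiability/tangential-weak-defectiveness reformulation of Connelly's direction, built so that it transfers to the $\ell_{p}$ setting; it does not, and is not claimed to, recover the hard necessity half of the Euclidean theorem.
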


Since the condition of Theorem~\ref{thm:GHT} is algebraic in the coordinates of $\bp$, Theorem~\ref{thm:GHT} implies that Euclidean global rigidity is a generic property of graphs. 
Thus it enables us to extend the idea of graph local rigidity by Gluck, Asimov, and Roth to global rigidity, and we say that a graph $G$ is {\em globally rigid} in the $d$-dimensional Euclidean space if a/every generic framework of $G$ in $\ell_2^d$ is globally rigid. 
As with local rigidity, we can further ask a combinatorial characterization of  global rigidity. For $d=2$, this question has been answered by Jackson and Jord{\'a}n~\cite{jackson2005connected},
and the higher dimensional characterisation problem is still open.
(In fact, the proof of Jackson and Jord{\'a}n is independent from that of Gortler-Healy-Thurston in the sense that it also gives Theorem~\ref{thm:GHT} for two-dimensional frameworks.)
A further refined analysis of global rigidity in terms of self-stresses can be found in \cite{G23}.

\subsection{Global rigidity problem in $\ell_p$-space}\label{global in lp}
Characterising global rigidity is a challenging problem in general, but the following folklore fact states that the problem is easy in the 1-dimensional line.
\begin{prop}\label{prop:1d}
Let $(G,\bp)$ be a framework in the $\ell_p$-line with $n \geq 2$ vertices such that $\bp$ is injective.
\begin{itemize}
\item If $(G,\bp)$ is globally rigid, then $G$ is 2-connected.
\item If $(G,\bp)$ is generic and $G$ is 2-connected, then $(G,\bp)$ is globally rigid.
\end{itemize}
\end{prop}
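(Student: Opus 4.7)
The plan is to prove the two bullets separately: necessity by exhibiting an equivalent, non-congruent framework whenever $G$ fails to be 2-connected, and sufficiency by induction on an open ear decomposition of $G$. The decisive simplification in one dimension is that, since $p$ is even, equivalence of $(G,\bq)$ to $(G,\bp)$ reads $\bq(j)-\bq(i)=\varepsilon_{ij}(\bp(j)-\bp(i))$ for some sign $\varepsilon_{ij}\in\{\pm 1\}$ on each edge $ij$, and every isometry of $\ell_p^1$ has the form $x\mapsto\pm x+a$.

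\textbf{Necessity.} Suppose $G$ is not 2-connected. If $G$ is disconnected with every component a singleton, it has no edges and the framework is trivially flexible. Otherwise, if $G$ is disconnected, pick a component $C$ with $|V(C)|\ge 2$ and reflect the points of $V(C)$ about a centre $r\in\mathbb{R}$ (the remaining vertices being fixed); the result $\bp'$ is equivalent. For $\bp'$ to be congruent to $\bp$ via $\iota(x)=\pm x+a$, the translation branch would force $\bp(u)=r$ for every $u\in V(C)$, violating injectivity, and the reflection branch can hold only when $|V\setminus V(C)|=1$ with $r$ equal to the position of that one vertex, so picking any other $r$ yields a non-congruent $\bp'$. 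If instead $G$ has a cut vertex $v$, take a component $C_1$ of $G-v$ and reflect $V(C_1)$ about $\bp(v)$; no edge of $G$ crosses the cut, so the new framework is equivalent. The translation branch now forces every vertex of $V(C_1)$ to coincide with $\bp(v)$, and the reflection branch forces $V\setminus V(C_1)$ — which contains $v$ together with some vertex of a second component of $G-v$ — to collapse to a single point, each contradicting injectivity.

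\textbf{Sufficiency.} Assume $G$ is 2-connected and $\bp$ is generic; the case $G=K_2$ is immediate, so take $n\ge 3$ and fix an open ear decomposition $G=C_0\cup P_1\cup\cdots\cup P_\ell$. Let $(G,\bq)$ be equivalent to $(G,\bp)$. Using a translation set $\bq(v_1)=\bp(v_1)$ for some $v_1\in V(C_0)$; then using the reflection $x\mapsto 2\bp(v_1)-x$ normalise $\varepsilon_{v_1v_2}=+1$ for a neighbour $v_2$ of $v_1$ in $C_0$. Traversing $C_0=v_1v_2\cdots v_kv_1$ yields the closure identity $\sum_{i=1}^{k}\varepsilon_i(\bp(v_{i+1})-\bp(v_i))=0$ (indices mod $k$); regrouping by vertex gives $\sum_j(\varepsilon_{j-1}-\varepsilon_j)\bp(v_j)=0$, and algebraic independence of the coordinates of $\bp$ over $\mathbb{Q}$ forces each integer coefficient to vanish, so all signs on $C_0$ are equal and, by the normalisation, equal $+1$. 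Hence $\bq=\bp$ on $V(C_0)$. For each subsequent ear $P=u_0u_1\cdots u_k$ with $\bq(u_0)=\bp(u_0)$ and $\bq(u_k)=\bp(u_k)$ already known, the analogous telescoping produces $\sum_{i=0}^{k-1}(1-\varepsilon_i)(\bp(u_{i+1})-\bp(u_i))=0$, whose vertex coefficients are $\varepsilon_0-1$, $\varepsilon_j-\varepsilon_{j-1}$ for interior $u_j$, and $1-\varepsilon_{k-1}$; genericity again forces every $\varepsilon_i=+1$, so $\bq$ agrees with $\bp$ on the interior of the ear. After all ears are processed, $\bq=\bp$ on $V(G)$, proving global rigidity.

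\textbf{Main obstacle.} Neither direction is deep, and the proof reduces to careful sign bookkeeping. The subtlety in necessity is ensuring that the chosen reflection is not accidentally itself a congruence, for which one must enumerate both types of isometries of $\ell_p^1$ and use injectivity of $\bp$ twice. In sufficiency the only non-trivial use of genericity is turning each closure relation into equalities among the $\varepsilon_i$; one must verify that, expanded in the vertex coordinates, the relation carries precisely the integer coefficients stated above and that these coefficients vanish only when all signs are $+1$.
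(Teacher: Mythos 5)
Your proof is correct. Note that the paper itself offers no argument for this proposition---it is stated as a folklore fact without proof---so there is no in-paper proof to compare against; your write-up supplies a complete and correct justification. The necessity direction (reflecting one side of a disconnection or of a cut vertex and checking against both branches $x\mapsto x+a$ and $x\mapsto -x+a$ of the isometry group of $\ell_p^1$, using injectivity twice) and the sufficiency direction (open ear decomposition, with the cycle-closure and ear-closure relations turned into vanishing integer coefficients via $\mathbb{Q}$-linear independence of the generic coordinates) both check out; in particular the vertex coefficients you list for the cycle and ear relations are the right ones, and the length-one ear case is covered by the same formulas. Two cosmetic points only: ``trivially flexible'' in the edgeless case should read ``not globally rigid'' (flexibility refers to continuous motions), and you implicitly adopt the convention that $K_2$ counts as 2-connected, which is consistent with how the paper uses the proposition but worth stating explicitly.
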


The necessary condition for generic global rigidity in general dimensions has been shown by Dewar, Hewetson, and Nixon~\cite{dewar2024uniquely} by adapting Hendrickson's argument.
Although they have established the result in a more general non-Euclidean normed space, we only mention the result in the $\ell_p$-plane.
\begin{theorem}[Dewar, Hewetson, and Nixon~\cite{dewar2024uniquely}]\label{thm:necessity}
Let $p$ be an even positive integer with $p\neq 2$ 
and let $(G,\bp)$ be a generic framework in $\ell_p^2$ with $n \geq 2$ vertices.
If $(G,\bp)$ is globally rigid, then $G$ is 2-connected and 
$G$ is redundantly 2-tree-connected, i.e.,
$G-e$ is 2-tree-connected for every $e\in E(G)$.
\end{theorem}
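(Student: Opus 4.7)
My plan is to verify each of the two claimed necessary conditions separately by contradiction, adapting Hendrickson-style arguments to the $\ell_p$-setting.

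\textbf{2-connectedness.} Suppose $G$ has a cut vertex $w$ (the disconnected case is analogous, replacing the reflection below by a translation of one component). Write $V(G)=V_1\cup V_2$ with $V_1\cap V_2=\{w\}$ and no $G$-edge between $V_1\setminus\{w\}$ and $V_2\setminus\{w\}$; since $w$ is a cut vertex both $V_i\setminus\{w\}$ are non-empty. After translating so that $\bp(w)=\bm 0$, let $\sigma:\mathbb R^d\to\mathbb R^d$ be the coordinate reflection $(x_1,\ldots,x_d)\mapsto(-x_1,x_2,\ldots,x_d)$ and define $\bp'(v)=\bp(v)$ for $v\in V_1$ and $\bp'(v)=\sigma(\bp(v))$ for $v\in V_2\setminus\{w\}$. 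Since $\sigma$ is an $\ell_p^d$-isometry fixing $\bm 0$, every $G$-edge length is preserved and $\bp'$ is equivalent to $\bp$. For generic $\bp$ no $\ell_p^d$-congruence maps $\bp$ to $\bp'$: every such congruence (since $p\neq 2$) is a translation composed with a signed permutation of coordinates, the only one fixing both $\bm 0$ and a second generic point of $\bp(V_1)$ is the identity, and the identity disagrees with $\sigma$ on the generic set $\bp(V_2\setminus\{w\})$.

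\textbf{Redundant 2-tree-connectedness.} Suppose $G-e$ is not 2-tree-connected for some $e=uv\in E(G)$. I first claim $(G-e,\bp)$ is not locally rigid: for $d=2$ this follows directly from Kitson--Power's characterisation of generic local rigidity in the $\ell_p$-plane (Theorem~2.3 of the excerpt), and for $d\geq 3$ from the observation that 2-tree-connectedness is necessary for generic local rigidity in $\ell_p^d$ in any dimension, which I would verify by a Nash--Williams-style rank count on the Jacobian $J(\pi_{G-e}\circ f_p^{\times d})$. Hence $(G-e,\bp)$ admits a real-analytic flex $\bp^t:[0,\varepsilon)\to(\mathbb R^d)^n$ with $\bp^0=\bp$ and $\bp^t$ non-congruent to $\bp$ for $t>0$. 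Put $\ell_e(t)=\sum_{k=1}^d((\bp^t(u))_k-(\bp^t(v))_k)^p$, an analytic function of $t$. If $\ell_e$ is constant, then $\bp^t$ is itself a non-trivial flex of $(G,\bp)$, contradicting the local rigidity implied by global rigidity; so $\ell_e$ is non-constant.

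\textbf{Main obstacle.} The crux is the non-constant case, and adapting Hendrickson's classical argument from the Euclidean setting is the principal technical challenge. The strategy is to let $C$ be the irreducible component over $\mathbb C$ of $\{\bq\in(\mathbb C^d)^n:\pi_{G-e}\circ f_p^{\times d}(\bq)=\pi_{G-e}\circ f_p^{\times d}(\bp)\}$ containing $\bp$, regard $L_e$ (the $p$-th power of the length of $e$) as a regular function on $C$, and analyse the fibre $L_e^{-1}(\ell_e(\bp))$. The flex forces $\dim C>d$, where $d$ is the dimension of the orbit of $\bp$ under $\ell_p^d$-congruence (discrete modulo translations). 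A generic-fibre/monodromy argument then identifies the number of complex congruence classes in this fibre with the generic degree of the dominant map $L_e:C/\mathrm{Cong}\to\mathbb C$; showing this degree is $\geq 2$ yields a realisation $\bq$ of $G$ equivalent to but non-congruent with $\bp$, contradicting global rigidity. The main difficulty is that the $\ell_p^d$-congruence group is not a connected Lie group, so the Euclidean reflection-across-a-facet trick is unavailable; one must instead use the specific algebraic form of the $p$-power polynomial $L_e$ and the genericity of $\bp$ to establish the degree-$\geq 2$ bound. A further technical point is ensuring the extra preimage lies in the real locus, which can be arranged by continuously deforming along the real flex and applying an intermediate-value argument to $\ell_e(t)$.
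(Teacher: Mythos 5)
This theorem is not proved in the paper at all: it is imported verbatim from Dewar, Hewetson and Nixon \cite{dewar2022uniquely}, who obtain it by adapting Hendrickson's compactness argument to the non-Euclidean setting. So the comparison can only be against that (cited) argument. Your first two steps are essentially sound: the reflection-across-a-coordinate-hyperplane argument for 2-connectedness is correct (it is the same trick the paper itself uses for Proposition~3.4), and the reduction ``$G-e$ not 2-tree-connected $\Rightarrow$ $(G-e,\bp)$ not locally rigid in $\ell_p^d$'' is fine for $d=2$ by Kitson--Power and is fillable for $d\geq 2$ by the sparsity count $|E'|\leq d|V'|-d$ on subgraphs plus Nash--Williams--Tutte, as you indicate. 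The constant-$\ell_e$ branch is also handled correctly.

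The genuine gap is exactly the step you label the ``main obstacle,'' and your proposed route does not close it. First, nothing in your sketch forces the generic degree of $L_e$ on $C/\mathrm{Cong}$ to be at least $2$: a dominant regular function on an affine curve (or on the quotient you describe) can perfectly well have degree $1$, and in the Euclidean case the second preimage is \emph{not} produced by a complex degree or monodromy count but by a real argument --- the fibre of the $(G-e)$-measurement map through $\bp$, reduced modulo translations, is a compact smooth manifold of positive dimension, $[\bp]$ is not a critical point of $\ell_e$ on it, and one walks from a maximiser to a minimiser of $\ell_e$ avoiding $[\bp]$ and applies the intermediate value theorem to recover the value $\ell_e(\bp)$ at a second, non-congruent point. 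Second, your fallback for realness fails as stated: the intermediate value theorem applied to $\ell_e(t)$ along the flex only yields values strictly between $\ell_e(0)$ and $\ell_e(t)$, all attained \emph{on the flex itself}; $\ell_e(t)$ may be strictly monotone, and without compactness of the whole real fibre component you never certify that the flex (or any real path) returns to the original value at a configuration not congruent to $\bp$. Finally, even granting the compactness argument, in $\ell_p^d$ the isometry group is translations extended by a finite group of signed permutations, so after reducing by translations the points congruent to $\bp$ form a finite set, and the max--min path must be shown to avoid all of them (delicate when the fibre component is one-dimensional); this is precisely the adaptation carried out in \cite{dewar2022uniquely} and is absent from your proposal. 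In short, the real, Hendrickson-style compactness-and-extremum argument --- not a complex degree bound --- is the missing core of the proof.
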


The main theorem of the paper by Dewar, Hewetson, and Nixon~\cite{dewar2024uniquely} (which is also given in a general non-Euclidean normed plane) is that a graph satisfying the combinatorial necessary condition in Theorem~\ref{thm:necessity} can be realised as a globally rigid generic framework.
\begin{theorem}[Dewar, Hewetson, and Nixon~\cite{dewar2024uniquely}]\label{thm:DHN}
Let $p$ be an even positive integer with $p\neq 2$ and $G=(V,E)$ be a graph with $n \geq 2$ vertices.
Then there exists a globally rigid generic framework $(G,\bp)$ in $\ell_p^2$
if and only if $G$ is 2-connected and $G$ is redundantly 2-tree-connected.
\end{theorem}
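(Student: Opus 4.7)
The necessity direction is immediate from Theorem~\ref{thm:necessity}, so the only content is sufficiency: every 2-connected, redundantly 2-tree-connected graph admits a globally rigid generic realisation in $\ell_p^2$. My plan is to induct on $|V(G)|$ via a recursive construction of the graph class. The first task is a purely combinatorial lemma showing that every 2-connected, redundantly 2-tree-connected graph can be obtained from a finite list of small base graphs by a sequence of Henneberg-type operations: edge-addition, 0-extension (adding a vertex of degree at least 3), and 1-extension (subdividing an edge and adding one further edge incident to the new vertex). Locating a reducible vertex in any non-base graph should follow by combining the tree-packing certificate implicit in 2-tree-connectedness with the redundancy hypothesis; a Nash-Williams / Tutte-type argument on the pair of edge-disjoint spanning trees is the natural tool.

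For the base cases, one must verify directly that each exceptional graph admits a globally rigid generic realisation in $\ell_p^2$. Since for $p\neq 2$ the $\ell_p$-isometries of the plane are only translations, reflections along coordinate axes, and coordinate permutations, this reduces to a hands-on analysis of the polynomial system $\sum_{k=1}^{2}((\bp(i))_k-(\bp(j))_k)^p=c_{ij}$ at a generic configuration, using algebraic independence of the coordinates over $\mathbb{Q}$ to exclude extraneous real solutions.

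For the inductive step, edge-addition trivially preserves global rigidity, and 0-extension can be handled by noting that three generic $\ell_p$-distance constraints in the plane confine the position of a new vertex to a finite set; genericity then lets us pick a lift of a globally rigid framework that remains globally rigid. The main obstacle is the 1-extension step. In the Euclidean setting, Connelly's stress-matrix theorem gives a clean proof, but the stress-matrix machinery relies on the quadraticity of squared distance and has no obvious counterpart in $\ell_p$. The hard part of the proof will be to replace this argument with one tailored to $\ell_p$-distances---plausibly an analytic continuation of any alleged equivalent-but-noncongruent realisation of the 1-extended framework along the algebraic locus of the subdividing vertex, using the redundant 2-tree-connectedness of $G'-v$ to guarantee that the continuation lifts to an equivalent realisation of $G$ and thereby contradicts the inductive hypothesis. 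Making this analytic argument rigorous without a stress-matrix substitute is the chief technical difficulty, and it is where one would expect to spend most of the effort.
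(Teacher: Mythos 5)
Your proposal has two genuine gaps, and they are exactly the two places where the real content lies. First, the combinatorial reduction lemma you plan to prove — that every 2-connected, redundantly 2-tree-connected graph can be generated from a finite list of base graphs by edge additions, 0-extensions and 1-extensions — is not the known construction and is almost certainly false as stated: the recursive characterisation of this class (Theorem~\ref{thm:DHN (2,2)-connected}, due to Dewar--Hewetson--Nixon building on Jackson--Nixon) requires $K_4^-$-extensions and \emph{generalised vertex splittings} in addition to edge additions, starting from $K_5^-$ or $B_1$. The reason those heavier operations appear in the literature is precisely that degree-3 vertices in this class need not be removable by a 1-reduction while staying in the class, so a Nash-Williams/Tutte argument on the two spanning trees will not rescue your Henneberg-only scheme. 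Second, the inductive step you identify as "the chief technical difficulty" — showing that the extension move preserves global rigidity in $\ell_p^2$ without a stress-matrix substitute — is left entirely unproved; the sketched "analytic continuation along the algebraic locus of the subdividing vertex" is not an argument, and this is where all the work is.

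For comparison, the paper does not reprove Theorem~\ref{thm:DHN} directly (it cites it), but its stronger Theorem~\ref{thm:last_main} supplies the sufficiency direction by a completely different route: it develops an $\ell_p$-substitute for Connelly's stress matrix, namely the $k$-th coordinated self-stress condition $\rank L_{G,\omega^k}=n-2$, proves this condition implies global rigidity via tangential weak non-defectiveness and $d$-identifiability of $\overline{\pi_G({\cal CM}_n^p)}$ (Lemma~\ref{lem:COV_test}, Proposition~\ref{prop:twnd_iden}, Proposition~\ref{prop:iden_global}, Theorem~\ref{thm:COV_test}), and then verifies the stress condition inductively along the correct construction (base cases $K_5^-$, $B_1$ in Lemma~\ref{lem: base case of 2-dim}; preservation under $K_4^-$-extension and generalised vertex splitting in Lemmas~\ref{lem:K4-extension} and~\ref{lem: generalised vertex split}). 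If you want to salvage your plan, you would need to (a) replace your operation set by the one in Theorem~\ref{thm:DHN (2,2)-connected}, and (b) supply an actual mechanism — such as the coordinated-stress/identifiability machinery of this paper, or DHN's original normed-space construction — for the step you currently leave open.
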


\subsection{Main rigidity theorems}\label{subsec:main}
The main result of this paper is the $\ell_p$-counterpart of Theorem~\ref{thm:GHT} when $d=2$ and $p$ is even. 
The result in particular implies that 2-dimensional global $\ell_p$-rigidity is a generic property of graphs.

To state our theorems, 
we need to introduce a new notion about self-stresses.
Let $p$ be a positive integer with $p\geq 2$.
For a $d$-dimensional framework $(G,\bp)$, 
we define a {\em self-stress} of $(G,\bp)$ as $\omega:E(G)\rightarrow \mathbb{R}$ satisfying
\begin{equation}\label{eq:equilibrium_p}
\sum_{j: ij\in E(G)} \omega(ij)(\bp(j)- \bp(i))^{p-1}=0\qquad (u\in V(G)),
\end{equation}
or, equivalently, $\omega$ is in the left kernel of $J(\pi_G\circ f_p^{\times d})(\bp)$.
(Recall our convention that 
$(\bp(j)- \bp(i))^{p-1}$ denotes the vector obtained by taking the $k$-th power of each coordinate of $\bp(j)-\bp(i)$.)
A  {\em $k$-th coordinated} self-stress is  $\omega^k:E(G)\rightarrow \mathbb{R}$  given by 
\begin{equation}\label{eq:coordinated}
\omega^k(ij):=\omega(ij) \left( (\bp(j))_k-(\bp(i)_k) \right)^{p-2}\qquad (ij\in E(G)).
\end{equation}
for some self-stress $\omega:E(G)\rightarrow \mathbb{R}$ of $(G,\bp)$.

Now we consider the Laplacian $L_{G,\omega^k}$ weighted by a $k$-th coordinated self-stress $\omega^k$.
Since $L_{G,\omega^k}$ is a Laplacian matrix, 
the all-one vector is in its kernel.
Also, since (\ref{eq:equilibrium_p}) and (\ref{eq:coordinated}) 
imply (\ref{eq:equilibrium}) in the $k$-th coordinate,
we have that $\rank L_{G,\omega^k}\leq n-2$.

We are now ready to state our main result.

\begin{theorem}\label{thm:main}
Let $p$ be an even positive integer with $p\neq 2$ and let $(G,\bp)$ be a generic 
 $2$-dimensional framework with $n \ge 3$ vertices.
Then $(G,\bp)$ is globally rigid in $\ell_p^2$ if and only if 
$(G,\bp)$ has a $k$-th coordinated self-stress $\omega^k$ for some $k$
such that $\rank L_{G,\omega^k}=n-2$.
\end{theorem}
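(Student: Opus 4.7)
The plan is to split the argument into two directions: sufficiency (existence of a maximum-rank coordinated stress implies generic global rigidity) and necessity (generic global rigidity implies such a stress exists).

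For sufficiency, I would directly invoke the general $\ell_p$-Connelly-type sufficient condition established in Section~\ref{sec:iden}. That result is obtained by translating the Bocci--Chiantini--Ottaviani--Vannieuwenhoven criterion for $d$-tangential weak non-defectiveness~\cite{BCC,chiantini2014algorithm} to the $p$-Cayley--Menger variety and combining it with the identifiability-to-global-rigidity bridge of~\cite{cruickshank2023identifiability}. Specialised to $d=2$, the criterion unpacks to exactly the ``if'' direction of Theorem~\ref{thm:main}: the existence of some $k$ and some $k$-th coordinated self-stress $\omega^k$ with $\rank L_{G,\omega^k}=n-2$ forces generic global rigidity.

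For necessity, suppose $(G,\bp)$ is generically globally rigid in $\ell_p^2$. By Theorem~\ref{thm:necessity}, $G$ must be $2$-connected and redundantly $2$-tree-connected. The crucial observation is that ``there exist $k$ and a $k$-th coordinated self-stress $\omega^k$ with $\rank L_{G,\omega^k}=n-2$'' is a Zariski-open condition on $\bp$: the left kernel of $J(\pi_G\circ f_p^{\times 2})(\bp)$ has locally-constant dimension at generic points, and the rank of the resulting coordinated Laplacian is lower-semicontinuous in $\bp$. Hence it suffices to exhibit a \emph{single} generic framework of $G$ satisfying the condition; genericity then transfers the property to $(G,\bp)$. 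I would produce such a framework by following the inductive construction of $2$-connected redundantly $2$-tree-connected graphs in~\cite{dewar2022uniquely}: verify the condition directly on a small base graph such as $K_4$ (where the coordinated stress can be computed explicitly), and then show that each Henneberg-type move used by Dewar, Hewetson, and Nixon admits a generic extension under which a maximum-rank coordinated self-stress continues to exist.

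The main technical obstacle is this preservation under the inductive moves. In the Euclidean setting, projective invariance and the well-developed calculus of stress matrices make analogous steps routine; for $\ell_p$-coordinated stresses neither symmetry is available, and one must argue directly with the equilibrium condition~\eqref{eq:equilibrium_p} together with the coordinate reweighting~\eqref{eq:coordinated}. For each operation I would (i) extend the inherited self-stress using the freedom introduced by the added edges or vertex, (ii) express the new $k$-th coordinated Laplacian as a block modification of the old one, and (iii) apply a Schur-complement-style rank calculation, exploiting the algebraic independence of the new coordinates, to confirm that rank $n-2$ is preserved. Managing the choice of coordinate index $k$ coherently through the induction---or, more robustly, arguing that for a sufficiently generic framework the condition holds simultaneously for every $k$---is likely the most delicate bookkeeping in the proof.
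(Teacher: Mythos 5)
Your overall architecture coincides with the paper's: sufficiency via the Bocci--Chiantini--Ottaviani--Vannieuwenhoven criterion specialised to $\overline{\pi_G({\cal CM}_n^p)}$ (Theorem~\ref{thm:COV_test}), and necessity via Theorem~\ref{thm:necessity} followed by an inductive construction of $2$-connected redundantly $2$-tree-connected graphs with a transfer-of-genericity principle (Proposition~\ref{prop: avoiding generic construction}, Theorem~\ref{thm:sugiyama}). However, two concrete points in your necessity sketch would fail as written. First, $K_4$ cannot serve as a base case: it has exactly $2n-2=6$ edges, so it is minimally locally rigid in $\ell_p^2$ and a generic framework on it carries no nonzero self-stress at all, making $\rank L_{G,\omega^k}=n-2=2$ impossible; moreover $K_4$ is not redundantly $2$-tree-connected, so it does not even lie in the graph class produced by the induction. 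The inductive construction of Dewar--Hewetson--Nixon (Theorem~\ref{thm:DHN (2,2)-connected}) starts from $K_5^-$ and $B_1$, and the paper verifies the stress condition for these by explicit frameworks and explicit stresses (Lemma~\ref{lem: base case of 2-dim}).

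Second, your step (iii) -- ``apply a Schur-complement-style rank calculation, exploiting the algebraic independence of the new coordinates'' -- underestimates the generalised vertex splitting operation. In the paper this is the genuinely hard step (Lemma~\ref{lem: generalised vertex split}): there is no evident way to write down the new self-stress at a generic placement of the split vertex. Instead the new vertex is placed \emph{coincident} with the old one and perturbed along the direction $\bd^*$ determined by the inherited stress; one needs the $v_0v_1$-coincident rigidity characterisation of Dewar--Hewetson--Nixon to see that the degenerate framework is still infinitesimally rigid and that $\bd^*\neq 0$, then a one-parameter family of stresses $\hat{\omega}_t$ whose value on the new edge blows up like $1/t$, and finally a limiting Schur-complement argument showing $L_{G',\hat{\omega}_t^1}/v_0\to L_{G,\omega^{*1}}$. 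A related, smaller point: the witnesses need not (and in the paper do not) stay generic through the induction -- the $K_4^-$-extension lemma uses an axis-aligned rectangle placement of the new $K_4$ -- because Proposition~\ref{prop: avoiding generic construction} transfers the condition from any infinitesimally rigid witness to all generic frameworks; insisting on ``a generic extension'' at each move, as you propose, would make the bookkeeping strictly harder than necessary.
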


Since the condition of Theorem~\ref{thm:main} is algebraic in the coordinates of $\bp$, Theorem~\ref{thm:main} implies that the global rigidity in the $\ell_p$-plane is a generic property of graphs. 
Combining this with Theorem~\ref{thm:DHN}, we have a Jackson and Jord{\'a}n type characterisation  in $\ell_p^2$, see Theorem~\ref{thm:last_main}.

We strongly believe that the same characterisation is valid in general.
\begin{conj}\label{conj:main}
Let $p$ be an even positive integer with $p\neq 2$, $d$ be a positive integer, and $(G,\bp)$ be a generic $d$-dimensional framework  with $n \ge 3$ vertices.
Then $(G,\bp)$ is globally rigid in $\ell_p^d$ if and only if 
$(G,\bp)$ has a $k$-th coordinated self-stress $\omega^k$ for some $k$
such that $\rank L_{G,\omega^k}=n-2$.
\end{conj}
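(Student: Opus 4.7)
The plan is to split the conjectured equivalence into its two implications and re-use everything the paper has already built for general $d$. The sufficient direction should follow directly from the Connelly-type theorem developed in Section~\ref{sec:iden}: applying the sufficient condition for $d$-tangential weak non-defectiveness of~\cite{BCC,chiantini2014algorithm} to the $p$-Cayley-Menger variety, one sees that a $k$-th coordinated self-stress $\omega^k$ with $\rank L_{G,\omega^k}=n-2$ yields the relevant $2$-identifiability statement, which by the framework of~\cite{cruickshank2023identifiability} forces generic global rigidity in $\ell_p^d$. This half of the argument is dimension-agnostic, so it should transfer verbatim from $d=2$ to arbitrary $d$.

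The hard direction is necessity, and here I would try to follow the same blueprint as in the proof of Theorem~\ref{thm:main}. By Theorem~\ref{thm:necessity} (adapted to $d$ dimensions via Hendrickson-style arguments), generic global rigidity in $\ell_p^d$ forces $G$ to be $2$-connected and redundantly locally rigid. The plan would then be: (i) verify the coordinated-stress condition directly on a small family of base graphs; (ii) fix an inductive construction that builds every graph satisfying the $d$-dimensional necessary conditions from those base graphs using a short list of combinatorial moves such as $d$-dimensional $1$-extensions and edge additions; and (iii) show that each such move preserves the existence of \emph{some} coordinate $k$ and some $\omega^k$ attaining $\rank L_{G,\omega^k}=n-2$, by patching the old coordinated stress with a local modification supported near the modified vertices and controlling the kernel of the resulting Laplacian.

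The main obstacle is step (ii). In dimension $d\geq 3$ no combinatorial characterisation of generic local rigidity in $\ell_p^d$ is currently known---the Kitson-Power conjecture on $d$-tree-connectivity is still open---and consequently no inductive construction of the redundantly locally rigid $2$-connected graphs is available at the generality required. This is the same combinatorial barrier that has kept even the Euclidean generic global rigidity problem unresolved for $d\geq 3$, and any graph-theoretic route to Conjecture~\ref{conj:main} will most likely have to confront it. A possible way to sidestep the difficulty is to drop the combinatorial induction altogether and argue purely algebro-geometrically: translate generic global rigidity into the $2$-identifiability of a coordinate-axis projection of the $p$-Cayley-Menger variety, and then use a tangent-space dimension count to extract, directly from $2$-identifiability, a stress configuration of corank $2$ in the coordinated Laplacian. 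Step (iii) should be the more tractable ingredient in either approach, since the defining relations for $\omega^k$ form a linear system and the standard combinatorial moves admit explicit rank and kernel analyses.
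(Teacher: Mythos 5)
The statement you are asked to prove is Conjecture~\ref{conj:main}, which the paper itself leaves open: the paper establishes only the sufficiency direction in general dimension (Theorem~\ref{thm:COV_test}, combined with Proposition~\ref{prop: avoiding generic construction} to pass from ``some $k$'' at a generic framework to ``all $k$'', which is what Lemma~\ref{lem:COV_test} actually needs), and the full equivalence only for $d=2$ (Theorem~\ref{thm:main}, via Theorem~\ref{thm:sugiyama}). Your treatment of the sufficiency half is essentially the paper's own argument and is fine in outline, apart from two small points: the relevant notion is $d$-identifiability of $\overline{\pi_G({\cal CM}_n^p)}$, not $2$-identifiability, and you should state explicitly that genericity plus coordinate-permutation symmetry (Proposition~\ref{prop: avoiding generic construction}) upgrades the hypothesis ``$\rank L_{G,\omega^k}=n-2$ for some $k$'' to the ``for all $k$'' hypothesis required in Lemma~\ref{lem:COV_test} and Theorem~\ref{thm:COV_test}.

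The necessity half, however, is not a proof but a programme, and the gap you yourself identify is precisely why the statement is a conjecture. Concretely: (a) step (ii) has no known substitute --- there is no analogue of Theorem~\ref{thm:DHN (2,2)-connected} for $d\geq 3$, since even generic local rigidity in $\ell_p^d$ lacks a combinatorial characterisation (the Kitson--Power $d$-tree-connectivity conjecture is open), so there is no certified list of base graphs and moves generating the relevant graph class; (b) step (iii) is not the ``more tractable ingredient'' you claim: the paper's preservation lemmas are genuinely two-dimensional --- Lemma~\ref{lem:K4-extension} places the new $K_4^-$ on the vertices of an axis-aligned rectangle in the plane, and Lemma~\ref{lem: generalised vertex split} invokes the $v_0v_1$-coincident rigidity characterisation of Dewar--Hewetson--Nixon in $\ell_p^2$ --- so neither transfers verbatim to higher dimensions; and (c) your proposed algebro-geometric shortcut (extracting a corank-$2$ coordinated Laplacian directly from $d$-identifiability by a tangent-space count) is not carried out and no such argument is known; even in the paper's own $d=2$ proof the implication from global rigidity to the stress condition goes through the combinatorial characterisation (Theorem~\ref{thm:necessity} followed by Theorem~\ref{thm:sugiyama}), not directly from identifiability. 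Note also that your route would in fact prove the stronger combinatorial equivalence of Conjecture~\ref{conj:last_main}, which is a substantially bigger claim than necessity alone; be aware that the Euclidean analogue of that stronger claim fails for $d\geq 3$, so any higher-dimensional argument must exploit features special to $\ell_p$.
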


It should be noted that the proof of the sufficiency is valid in any dimension. This is stated in Theorem~\ref{thm:COV_test} and it offers an $\ell_p$-analogue of Connelly's sufficient condition~\cite{connelly2005generic}.

\begin{eg}\label{ex: base case of 1-dim}
    As a simplest example, consider the case when $G = K_3$ and 
    $(G, \bx)$ is a 1-dimensional framework.
    Then,
    \begin{equation*}
    J(\pi_G \circ f_p^{\times 1})(\bx) = 
        \begin{blockarray}{cccc}
            \,& v_1 & v_2 & v_3\\
            \begin{block}{c(ccc)}
                v_1 v_2 & (\bx(v_1)-\bx(v_2))^{p-1} & (\bx(v_2)-\bx(v_1))^{p-1} & 0\\
                v_1 v_3 & (\bx(v_1)-\bx(v_3))^{p-1} & 0 & (\bx(v_3)-\bx(v_1))^{p-1}\\
                v_2 v_3 & 0 & (\bx(v_2)-\bx(v_3))^{p-1} & (\bx(v_3)-\bx(v_2))^{p-1}\\
            \end{block}
        \end{blockarray}.
    \end{equation*}
    It is straightforward to check that
    \begin{align*}
        \omega &= (\omega(v_1v_2), \omega(v_1v_3), \omega(v_2v_3))\\
        &= ((\bx(v_1) - \bx(v_2))^{1-p}, (\bx(v_3) - \bx(v_1))^{1-p}, (\bx(v_2) - \bx(v_3))^{1-p})
    \end{align*}
    is a unique self-stress of $(G, \bx)$ up to constant, and the Laplacian matrix weighted by the first-coordinated stress is
    \begin{equation*}
        L_{G, \omega^1} = \begin{pmatrix}
            \frac{1}{\bx(v_3) - \bx(v_1)} + \frac{1}{\bx(v_1) - \bx(v_2)} & -\frac{1}{\bx(v_1) - \bx(v_2)} & -\frac{1}{\bx(v_3) - \bx(v_1)} \\
            -\frac{1}{\bx(v_1) - \bx(v_2)} & \frac{1}{\bx(v_1) - \bx(v_2)} + \frac{1}{\bx(v_2) - \bx(v_3)} & -\frac{1}{\bx(v_2) - \bx(v_3)} \\
            -\frac{1}{\bx(v_3) - \bx(v_1)} & -\frac{1}{\bx(v_2) - \bx(v_3)} & \frac{1}{\bx(v_2) - \bx(v_3)} + \frac{1}{\bx(v_3) - \bx(v_1)}
        \end{pmatrix}.
    \end{equation*}
    $L_{G,\omega^1}$ has rank equal to $1 = |V(G)| - 2$. \qed
\end{eg}

The second of our main results is the following sufficient condition in terms of local rigidity in general dimension.
\begin{theorem}\label{thm:suff}
Let $p$ be an even positive integer with $p\neq 2$, $d$ be a positive integer, and $(G,\bp)$ be a generic $d$-dimensional framework  with $n$ vertices.
Then $(G,\bp)$ is globally rigid in $\ell_p^d$ if 
$G$ is 2-connected and $G$ is locally rigid in $\ell_p^{d+1}$.
\end{theorem}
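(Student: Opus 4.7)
The plan is to verify the hypothesis of Theorem~\ref{thm:COV_test}, the general-dimensional $\ell_p$-analogue of Connelly's sufficient condition for generic global rigidity. Theorem~\ref{thm:COV_test} guarantees that $(G,\bp)$ is globally rigid in $\ell_p^d$ provided one can exhibit a self-stress $\omega$ of $(G,\bp)$ whose $k$-th coordinated weighted Laplacian $L_{G,\omega^k}$ attains rank $n-2$ for a suitable coordinate index $k$. Thus the entire argument reduces to producing such a stress, and the additional hypothesis of local rigidity in the ambient dimension $d+1$ is what makes this construction possible.

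The construction is to lift the framework to one higher dimension: choose real numbers $(y_v)_{v\in V}$ algebraically independent over $\mathbb{Q}(\bp)$ and set $\tilde{\bp}(v):=(\bp(v),y_v)\in\mathbb{R}^{d+1}$. Then $\tilde{\bp}$ is a generic configuration in $\ell_p^{d+1}$, so by the local rigidity of $G$ in $\ell_p^{d+1}$ and Proposition~\ref{prop:inf}, the framework $(G,\tilde{\bp})$ is infinitesimally rigid and its space of self-stresses in $\ell_p^{d+1}$ has the expected dimension $|E|-(d+1)(n-1)$. Every such stress $\tilde{\omega}$ is automatically a self-stress $\omega:=\tilde{\omega}$ of $(G,\bp)$ in $\ell_p^d$, since the $\mathbb{R}^{d+1}$-valued equilibrium condition~(\ref{eq:equilibrium_p}) restricts componentwise to the $d$-dimensional one on the first $d$ coordinates. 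Crucially, the additional $(d+1)$-th coordinate component of~(\ref{eq:equilibrium_p}) furnishes the extra relation that the vector $(y_v)_{v\in V}$ lies in the kernel of the auxiliary Laplacian weighted by $\tilde{\omega}(ij)(y_j-y_i)^{p-2}$, which supplies the leverage needed to pin down the rank of $L_{G,\omega^k}$ for $k\le d$.

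The remaining and principal step is to show that for a generic $\tilde{\omega}$ in this stress space, some $L_{G,\omega^k}$ with $k\le d$ has rank exactly $n-2$. The kernel of $L_{G,\omega^k}$ always contains the all-ones vector and the $k$-th coordinate vector of $\bp$, so one must rule out any third, unexpected kernel element. I would argue by contradiction: any such third vector, combined with the algebraic independence of $(y_v)$ and the $(d+1)$-th coordinate equilibrium carried by $\tilde{\omega}$, would yield a hidden flex that must be self-consistent along every edge of $G$; the 2-connectedness of $G$, applied via an ear-decomposition, then forces this flex to be globally consistent on the whole vertex set, and the algebraic independence of $(y_v)$ over $\mathbb{Q}(\bp)$ makes it trivial. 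The main obstacle is this final propagation-and-consistency argument: unlike the Euclidean setting, where the quadratic measurement makes stress-kernel analysis essentially linear-algebraic, for general even $p$ the weights $((\bp(j))_k-(\bp(i))_k)^{p-2}$ in $L_{G,\omega^k}$ introduce polynomial complications whose vanishing must be tracked carefully through genericity and the combinatorics of 2-connected graphs.
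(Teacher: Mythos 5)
Your overall strategy---verify the stress hypothesis of Theorem~\ref{thm:COV_test} directly in dimension $d$---runs into a genuine gap at exactly the step you flag as ``remaining and principal''. Producing a self-stress $\omega$ of a generic $d$-dimensional framework with $\rank L_{G,\omega^k}=n-2$ (and note that Theorem~\ref{thm:COV_test} and Lemma~\ref{lem:COV_test} require this for \emph{all} $k\in\{1,\dots,d\}$, not just one suitable $k$) is precisely the stress condition of Conjecture~\ref{conj:main}, which the paper does not know how to establish in general dimension from 2-connectivity and local rigidity in $\ell_p^{d+1}$; even in the plane it requires the full inductive machinery of Section~\ref{sec:plane} (the Dewar--Hewetson--Nixon construction, $K_4^-$-extensions and generalised vertex splits). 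Your ``propagation-and-consistency along an ear decomposition'' is only a hope, not an argument: nothing in the sketch rules out a third kernel vector of $L_{G,\omega^k}$, and the lifting construction itself can be vacuous---if $G$ is \emph{minimally} locally rigid in $\ell_p^{d+1}$ (so $|E|=(d+1)(n-1)$, which the hypotheses allow), the stress space of $(G,\tilde{\bp})$ is zero, so the only stress your lift produces is $\omega=0$ and the claimed ``leverage'' from the $(d+1)$-th coordinate disappears. Restricting attention to stresses coming from the lift also gives no reason why a generic element of that (possibly trivial) subspace should maximise the Laplacian rank.

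The paper's proof avoids the $d$-dimensional stress condition altogether. It uses the Massarenti--Mella theorem (Theorem~\ref{thm:Massarenti and Mella}, via Corollary~\ref{cor:MM}): local rigidity in $\ell_p^{d+1}$ is exactly non-$(d+1)$-defectivity of $\overline{\pi_G({\cal CM}_n^p)}$ (Proposition~\ref{prop:defective}), and the only stress computation needed is the \emph{one-dimensional} one certifying that the variety is not $1$-tangentially weakly defective. That one-dimensional rank statement, $\rank L_{G,\omega^1}=n-2$ for a generic stress of a generic framework on the line, is proved for all 2-connected graphs by an elementary induction on edge subdivisions with $K_3$ as base case (Theorem~\ref{thm:laplacian of 1-dim}). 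Then $d$-identifiability follows from Corollary~\ref{cor:MM} and global rigidity from Proposition~\ref{prop:iden_global}. So the paper trades your hard unproven rank claim in dimension $d$ for a tractable rank claim in dimension $1$ plus a known identifiability theorem; to salvage your route you would need an actual proof of the $d$-dimensional stress condition, which at present is open.
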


An application of Theorem~\ref{thm:suff} is a sharp lower bound of the sampling probability for the Erd{\H o}s-R{\'e}nyi random graph to be globally rigid in $\ell_p^d$.
For non-negative integers $n, M$, the Erd{\H o}s-R{\'e}nyi random graph $G(n,M)$ is defined inductively by first setting $G(n, 0)$ to be the empty graph with $n$ vertices
and then setting $G(n,M)$ to be obtained from $G(n,M-1)$ by adding a new edge taken uniformly randomly from the complement of $G(n, M-1)$.
Consider the following three random  variables,
\begin{align*}
    M_{\ell_p^d-LR} &:= \min\{M: \text{$G(n, M)$ is locally rigid in $\ell_p^d$}\},\\
    M_{\ell_p^d-GR} &:= \min\{M: \text{$(G(n, M), \bp)$ is globally rigid in $\ell_p^d$ for every generic $\bp$}\}, \text{and}\\
    M_d &:= \min\{M: \delta(G(n, M)) = d\},
\end{align*}
where $\delta(H)$ denotes the minimum degree of a graph $H$.
Recently, Lew et al.~\cite{lew2023sharp} showed that 
$\mathrm{a.a.s.}$ $M_{\ell_2^d-LR} = M_d$ and $M_{\ell_2^d-GR} = M_{d+1}$ for Euclidean rigidity.
Their argument is still valid for local rigidity in $\ell_p$-space,
and hence Theorem~\ref{thm:suff} further implies the global rigidity threshold as follows.
\begin{corollary}
    Let $p$ be an even positive integer with $p \neq 2$ and $d$ be a positive integer.
    Then, $\mathrm{a.a.s.}$ $M_{\ell_p^d-LR} = M_d$ and $M_{\ell_p^d-GR} = M_{d+1}$.
\end{corollary}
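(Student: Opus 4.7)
The plan is to establish the two equalities $M_{\ell_p^d-LR}=M_d$ and $M_{\ell_p^d-GR}=M_{d+1}$ a.a.s.\ separately. For the local rigidity threshold, I would revisit the argument of Lew et al.~\cite{lew2023sharp} and observe that it is purely combinatorial, relying on the ambient geometry only through the rank condition $\rank J=dn-d$ that characterises generic local rigidity. Since Proposition~\ref{prop:inf} provides exactly this criterion for $\ell_p^d$ (the trivial-motion space is $d$-dimensional and arises from translations alone, because the isometry group of $\ell_p^d$ modulo translations is finite for $p\neq 2$), the proof of Lew et al.\ transfers verbatim to yield $M_{\ell_p^d-LR}=M_d$ a.a.s.

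For the upper bound $M_{\ell_p^d-GR}\leq M_{d+1}$, I would apply the just-established local rigidity threshold in dimension $d+1$: a.a.s., $G(n,M_{d+1})$ is locally rigid in $\ell_p^{d+1}$. Combined with the classical Erd{\"o}s--R{\'e}nyi fact that $G(n,M_{d+1})$ is a.a.s.\ $(d+1)$-connected, hence in particular $2$-connected, Theorem~\ref{thm:suff} immediately upgrades local rigidity in $\ell_p^{d+1}$ to global rigidity in $\ell_p^d$.

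The matching lower bound $M_{\ell_p^d-GR}\geq M_{d+1}$ reduces to a Hendrickson-type necessary condition: a generically globally rigid graph in $\ell_p^d$ must have minimum degree at least $d+1$, and at $M<M_{d+1}$ the random graph a.a.s.\ contains a vertex of degree at most $d$. For $d\leq 2$ this is already covered by Theorem~\ref{thm:necessity}, since redundant $2$-tree-connectedness forces $\delta\geq 3$. For $d\geq 3$ I would argue directly: given a degree-$k$ vertex $v$ with $k\leq d$ in a generic framework, the coordinates of $\bp(v)$ satisfy $k$ polynomial equations of degree $p$ in $d$ unknowns, so their common zero locus is either positive-dimensional or, by B\'ezout, contains at least $p^d\geq 2^d$ complex points. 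Since any isometry of $\ell_p^d$ fixing the other $n-1$ generic points must be the identity, relocating $\bp(v)$ to any other real solution produces an equivalent but non-congruent framework. I expect the main technical obstacle to be verifying the existence of a second real solution generically when $k=d$ and $p>2$; my intended approach is to first specialise to a degenerate configuration in which the neighbours of $v$ lie on a coordinate hyperplane, where reflection along the perpendicular coordinate axis is a genuine $\ell_p$-isometry producing a second real solution, and then to continue this solution to generic realisations via an implicit-function-style argument.
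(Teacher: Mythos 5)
Your treatment of the local-rigidity threshold and of the upper bound $M_{\ell_p^d-GR}\le M_{d+1}$ coincides with the paper's: the corollary is obtained there exactly by noting that the argument of Lew et al.\ transfers verbatim to $\ell_p$-local rigidity (Proposition~\ref{prop:inf} supplying the rank criterion), and by combining the threshold $M_{\ell_p^{d+1}-LR}=M_{d+1}$ with a.a.s.\ $2$-connectivity at that hitting time and Theorem~\ref{thm:suff}. The divergence is in the lower bound $M_{\ell_p^d-GR}\ge M_{d+1}$. The paper takes the required minimum-degree obstruction from the Hendrickson-type necessary condition of Dewar--Hewetson--Nixon (Theorem~\ref{thm:necessity}, which is invoked for $\ell_p^d$ in general dimension; in its general form it gives redundant local rigidity, hence $\delta(G)\ge d+1$ for large $n$), so no new argument is needed for any $d$. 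You instead restrict that citation to $d\le 2$ and attempt to re-prove the degree obstruction for $d\ge 3$, and this is where your proposal has a genuine gap: the step you yourself flag --- that a degree-$d$ vertex of a generic framework admits a second, non-congruent real placement with the same $p$-th-power distance values when $p>2$ --- is precisely the nontrivial content of Hendrickson's argument in the $\ell_p$ setting, and your B\'ezout remark does not supply it (B\'ezout bounds the number of complex solutions from above, it does not guarantee $p^d$ distinct affine solutions, and a complex solution would in any case not yield the required equivalent non-congruent real framework).

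Your sketched repair is plausible but not carried out. Because $M_{\ell_p^d-GR}$ is defined through ``globally rigid for every generic $\bp$,'' it does suffice to exhibit a single generic realization that fails, and the specialization with the $d$ neighbours of $v$ on a coordinate hyperplane does give a second real solution (flip the perpendicular coordinate of $\bp(v)$, using that $p$ is even), which one could then propagate by the implicit function theorem to an open set of configurations, every nonempty open set containing generic points. To make this a proof you would still need to verify nonsingularity of the $d\times d$ Jacobian at the reflected point, the persistence and distinctness of the two real solutions under perturbation, and non-congruence of the resulting frameworks (the latter is fine since an $\ell_p$-isometry fixing the $n-1\ge 2$ generic unmoved points is the identity). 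None of this is in the paper because the cited necessary condition already covers all dimensions; citing it, as the paper does, is the clean fix.
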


The proofs of Theorems~\ref{thm:main} and~\ref{thm:suff} are given in Sections~\ref{sec:plane} and ~\ref{sec:suff}, respectively.

\subsection{Comments on the self-stress condition in Theorem~\ref{thm:main}}\label{subsec:remark}
Theorem~\ref{thm:main} (or Conjecture~\ref{conj:main}) can be used to develop an efficient method for checking if a/every generic realisation of $G$ is globally rigid.
This is an adaptation of a technique in Euclidean global rigidity~\cite{gortler2010characterizing}.


Let $(G, \bp)$ be a generic locally rigid framework in $\ell_p^d$, and $G_0$ be a minimally locally rigid spanning subgraph of $G$.
(Here a graph $H$ is said to be minimally locally rigid if $H$ is locally rigid and $H-e$ is not locally rigid for any $e\in E(H)$.)
For each $e \in E(G) \setminus E(G_0)$, there exists a unique self-stress $\omega_e$ of $(G_0+e,\bp)$ such that $\omega_e(e)=1$.
Observe that $\{\omega_e\}_{e \in E(G) \setminus E(G_0)}$ forms a basis of the linear space of self-stresses of $(G,\bp)$.
Hence, a self-stress $\omega$ of $(G,\bp)$ can be written, in a unique way, as
\begin{equation}\label{eq: linear combination of self-stress}
    \omega = \sum_{e \in E(G) \setminus E(G_0)} c_e \omega_e
\end{equation}
for some real numbers $\bc=\{c_e\}_{e\in E(G)\setminus E(G_0)}$.
We say that $\omega$ is \textit{generic} with respect to $G_0$ if  
$\{c_e\}_{e \in E(G) \setminus E(G_0)}$ is algebraically independent over the field $\mathbb{Q}(\bp)$ generated by $\mathbb{Q}$ and the entries of $\bp$,
and $\omega$ is {\em generic} if 
it is generic with respect to some minimally locally rigid spanning subgraph $G_0$ of $G$.

Since each entry of $\omega_e$ is a rational function in the entries of $\bp$, a self-stress is written as a rational map $\omega(\bp, \bc)$ of $\bp$ and $\bc$.
This implies that the entries in the Laplacian matrix $L_{G, \omega(\bp, \bc)^k}$ are also rational functions of $\bp$ and $\bc$.
Hence the rank of $L_{G, \omega(\bp, \bc)^k}$ will be maximised whenever both $\bp$ and $\omega$ are generic. 
Also, such rational functions in $\bp$  are symmetric with respect to permutations of coordinates.
Thus, we have the following proposition.
\begin{prop}($\ell_p$-analogue of a theorem of Connelly and Whiteley~\cite{connelly2005generic}) \label{prop: avoiding generic construction}
    Let $p$ be an integer with $p\geq 2$,
    and $(G,\bp)$ be a $d$-dimensional framework.
    If $(G,\bp)$ is infinitesimally rigid in $\ell_p^d$, then 
    $(G,\bq)$ is infinitesimally rigid in $\ell_p^d$ for every generic framework $(G,\bq)$.
    Moreover, if $(G,\bp)$ has  a self-stress $\omega$  with $\rank L_{G, \omega^k} = |V(G)|-2$ for some $k$, then $\rank L_{G, {\omega'}^{k'}} = |V(G)|-2$ holds for every generic framework $(G, \bq)$, every generic self-stress $\omega'$ of $(G,\bq)$, and every $k'\in \{1,\dots, d\}$.
\end{prop}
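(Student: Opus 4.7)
The plan is the standard genericity argument in the style of Connelly and Whiteley. Each assertion claims that a Zariski-open condition defined by polynomials over $\mathbb{Q}$ holds at every generic point of an appropriate parameter space; it therefore suffices to exhibit a single point satisfying the condition, since a generic point, having coordinates algebraically independent over $\mathbb{Q}$, avoids every proper algebraic subset defined over $\mathbb{Q}$.

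For the first statement, the entries of $J(\pi_G\circ f_p^{\times d})(\bp)$ are integer polynomials in the coordinates of $\bp$, so infinitesimal rigidity---the non-vanishing of some $(dn-d)\times(dn-d)$ minor---defines a Zariski-open subset of $(\mathbb{R}^d)^n$ over $\mathbb{Q}$, which is non-empty by hypothesis. No generic $\bq$ lies in its complement, yielding the first claim.

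For the second statement, I would fix a minimally infinitesimally rigid spanning subgraph $G_0$ of $G$ and use (\ref{eq: linear combination of self-stress}) to express each self-stress of $(G,\bp)$ at $\bp$ in the Zariski-open locus $V\subset(\mathbb{R}^d)^n$ where $G_0$ is infinitesimally rigid as $\omega(\bp,\bc)$ with $\bc\in\mathbb{R}^{E(G)\setminus E(G_0)}$. The entries of $\omega(\bp,\bc)^k$, and hence each minor of $L_{G,\omega(\bp,\bc)^k}$, are rational functions over $\mathbb{Q}$ in $(\bp,\bc)$. Combined with the bound $\rank L_{G,\omega^k}\leq n-2$ noted before the statement, the equality $\rank L_{G,\omega^k}=n-2$ coincides with the non-vanishing of some $(n-2)\times(n-2)$ minor, a Zariski-open condition on $V\times\mathbb{R}^{E(G)\setminus E(G_0)}$ defined over $\mathbb{Q}$. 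The hypothesis, combined with the observation that the main irreducible component of the stress variety projects dominantly onto $(\mathbb{R}^d)^n$, shows that this open subset is non-empty. The coordinate-permutation symmetry of $\ell_p^d$---swapping axes $k$ and $k'$ on every vertex preserves the self-stress space and interchanges $L_{G,\omega^k}$ with the Laplacian for the $k'$-th coordinated stress at the permuted point---then transfers non-emptiness to every $k'\in\{1,\dots,d\}$. For each such $k'$ there is a nontrivial rational function $f_{k'}(\bp,\bc)\in\mathbb{Q}(\bp,\bc)$ whose non-vanishing characterises rank $n-2$. Genericity of $\bq$ forces $f_{k'}(\bq,\bc)$ to remain a nonzero rational function of $\bc$ alone (else its numerator's coefficients, being polynomials in $\bp$ over $\mathbb{Q}$, would furnish a nontrivial algebraic relation among the coordinates of $\bq$), and for $\bc'$ algebraically independent over $\mathbb{Q}(\bq)$ we obtain $f_{k'}(\bq,\bc')\neq 0$, the desired rank condition.

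The main obstacle I anticipate is the step in which the hypothesis $(\bp,\omega)$---where $\bp$ may lie outside $V$---is lifted to a point of $V\times\mathbb{R}^{E(G)\setminus E(G_0)}$ at which the rank still equals $n-2$. This requires analysing the irreducible components of the variety of self-stresses and verifying that the main component supports the rank equality, for instance via the Zariski-open locus where the rank of the rigidity matrix is maximal. Once this lift is in place, the remaining bookkeeping---the precise formulation of the coordinate-permutation symmetry and the two-stage genericity transfer from $(\bp,\bc)$ to $(\bq,\bc')$---is routine, since the quantity $L_{G,\omega^k}$ depends only on $(\bp,\omega)$ and not on the particular choice of $G_0$ used to coordinatise the stress space.
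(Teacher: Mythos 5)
Your proposal follows essentially the same route as the paper, whose own justification is the informal discussion preceding the proposition: parametrise self-stresses as rational functions $\omega(\bp,\bc)$ via a minimally rigid spanning subgraph, note that $\rank L_{G,\omega^{k}} = |V(G)|-2$ is the non-vanishing of an $(n-2)\times(n-2)$ minor (a condition defined over $\mathbb{Q}$), use the coordinate-permutation symmetry of the equilibrium condition to cover every $k'$, and transfer by genericity of $(\bq,\bc')$. The ``obstacle'' you flag is handled exactly as you suggest and is the reason the proposition assumes infinitesimal rigidity of the witness: since $J(\pi_G\circ f_p^{\times d})(\bp)$ has the maximal rank $dn-d$, the point $\bp$ lies in the Zariski-open locus where the stress spaces have constant dimension, so $(\bp,\omega)$ sits on the irreducible total space of that kernel family together with the generically parametrised stresses, and non-vanishing of the minor at $(\bp,\omega)$ forces the corresponding rational function of $(\bq,\bc)$ to be non-zero.
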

Proposition~\ref{prop: avoiding generic construction} says that, in order to confirm the stress condition in Theorem~\ref{thm:main} or Conjecture~\ref{conj:main}, it is sufficient to construct an infinitesimally rigid framework $(G,\bp)$ (which may not be generic) that admits  a self-stress $\omega$  with $\rank L_{G, \omega^k} = |V(G)|-2$ for some $k$.

\medskip

We also remark the following simple property of weighted Laplacians.
\begin{lemma}\label{lem:laplacian_property}
Let $G$ be a connected graph and $\omega:E(G)\rightarrow \mathbb{R}\setminus \{0\}$ be an edge weight.
Suppose $G$ has a $1$-separator $(H_1, H_2)$,
that is, $G=H_1\cup H_2$, $|V(H_1)\cap V(H_2)|=1$, and
$V(H_1)\setminus V(H_2)\neq \emptyset \neq V(H_2)\setminus V(H_1)$.
Let $\omega_i$ be the restriction of $\omega$ to $E(H_i)$.
\begin{itemize} 
\item[(i)] $\rank L_{G,\omega}=\rank L_{H_1,\omega_1}+\rank L_{H_2,\omega_2}$.
\item[(ii)] If $\omega$ is a self-stress of a $d$-dimensional framework $(G,\bp)$
and $\bp|H_i$ be the restriction of $\bp$ to $V(H_i)$ for $i=1,2$,
then $\omega_i$ is a self-stress of $(H_i,\bp|H_i)$.
\end{itemize}
\end{lemma}
\begin{proof}
Denote the vertex in $V(H_1)\cap V(H_2)$ by $v$,
and denote
\[
L_{H_i, \omega_i} = \begin{blockarray}{ccc}
        \, & V(H_i) \setminus \{v\} & v \\
        \begin{block}{c(cc)}
            V(H_i) \setminus \{v\}& A_i & b_i^{\top} \\
            v & b_i & c_i \\
        \end{block}
    \end{blockarray}
\]
Since $H_i$ is connected and each entry of $\omega_i$ is nonzero, $b_i$ is not a zero vector.
Moreover, since the sum of columns (resp., rows) of $L_{H_i,\omega_i}$ is zero, 
the rank of $A_i$ is equal to that of $L_{H_i,\omega_i}$.

By $G=H_1\cup H_2$, 
\begin{equation*}
    L_{G, \omega} = \begin{blockarray}{cccc}
        \,& V(H_1) \setminus \{v\} & v & V(H_2) \setminus \{v\}\\
        \begin{block}{c(ccc)}
           V(H_1) \setminus \{v\} & A_1 & b_1^{\top} & 0\\
           v & b_1 & c_1 + c_2 & b_2 \\
           V(H_2) \setminus \{v\} & 0 & b_2^{\top} & A_2\\
        \end{block}
    \end{blockarray}.
\end{equation*}
In this matrix,
adding all the columns of $V(H_1)\setminus \{v\}$ to that of $v$
and then adding all the rows of $V(H_1)\setminus \{v\}$ to that of $v$ 
result in 
\begin{equation}\label{eq:separator}
\begin{pmatrix}
        A_1 & 0 & 0 \\
        0 & c_2 & b_2 \\
        0 & b_2^{\top} & A_2
    \end{pmatrix}.
\end{equation}
Hence, 
$\rank L_{G, \omega} =
\rank A_1 +\rank L_{H_2,\omega_2}
=\rank L_{H_1,\omega_1}+\rank L_{H_2,\omega_2}.$
This completes the proof of (i).

To see (ii), recall the fact that 
infinitesimal rigidity in $\ell_p$-space is invariant under translations of a framework. This implies that 
the $d$ columns associated with vertex $v$ are redundant in the column space of $J(\pi_{H_i}\circ f_p^{\times d}(\bp|H_i))$,
and the equilibrium condition (\ref{eq:equilibrium_p}) at $v$ automatically holds if it holds at other vertices.
Hence, if $\omega$ is a self-stress of $(G,\bp)$, then $\omega_i$ is a self-stress of $(H_i,\bp|H_i)$.
\end{proof}

Lemma~\ref{lem:laplacian_property} implies that 
2-connectivity is necessary for the stress condition in Theorem~\ref{thm:main}.

\section{Identifiability and Global Rigidity}\label{sec:iden}
In this section we shall introduce tools from algebraic geometry,
which are used in the proofs of Theorems~\ref{thm:main} and~\ref{thm:suff}.
The materials in this section can be understood as a detailed exposition of \cite[Section 7.2]{cruickshank2023identifiability} by specializing discussion to $\ell_p$-rigidity.

Since our tools are taken from the context of projective varieties over the complex field, we shall look at the affine cone of projective varieties over the complex field.
It should be noted that our rigidity results are established over the real field and we look at frameworks with real point configurations when referring to rigidity.

Let ${\cal V}$ be an affine variety in $\mathbb{C}^m$.
The {\em $d$-secant} $S_d({\cal V})$ of ${\cal V}$ is defined as the Zariski closure of the union of linear subspaces spanned by $d$ points in ${\cal V}$.
If ${\cal V}$ is a cone, we have
\[
S_d({\cal V})=\overline{\left\{\sum_{k=1}^d x_k: x_1,\dots, x_d\in {\cal V}\right\}},
\]
where the upper bar denotes the Zariski closure.

For a point $x$ in ${\cal V}$, let $T_{x}{\cal V}$ be the tangent space of ${\cal V}$ at $x$.
If ${\cal V}$ is irreducible, a point $x$ in ${\cal V}$ is said to be {\em generic} if any polynomial with rational coefficients vanishing at $x$ is vanishing at every point in ${\cal V}$.

\subsection{Cayley-Menger varieties}\label{subsec:cayley-menger}
For a positive integer $n$, define 
$f_2: \mathbb{C}^{n}\rightarrow \mathbb{C}^{n\choose 2}$ by
\[
(f_2(\bx))_{ij}=(x_i-x_j)^2 \qquad (\bx=(x_1,\dots, x_n)\in \mathbb{C}^{n})
\]
for each $i,j$ with $1\leq i<j\leq n$.
When $\bx\in \mathbb{R}^{n}$,  $f_2(\bx)$ is the list of squared inter-point distances  over points $x_1, x_2, \dots, x_n$ in the real line.
Over $\mathbb{C}$, the Zariski closure of the image of $f_2$ is known as the {\em Cayley-Menger variety} ${\cal CM}_n$,
and the $d$-secant is called the $d$-dimensional Cayley-Menger variety.
The $d$-dimensional Cayley-Menger variety has been extensively studied in the context of rigidity theory, and as shown by Gortler, Healy, and Thurston~\cite{gortler2010characterizing} it plays a critical role in the analysis of generic global rigidity.

In this paper, we consider the $\ell_p$-analogue of the Cayley-Menger variety.
For a positive integer $p$, define 
$f_p: \mathbb{C}^{n}\rightarrow \mathbb{C}^{n\choose 2}$ by
\[
(f_p(\bx))_{ij}=(x_i-x_j)^p \qquad (\bx=(x_1,\dots, x_n)\in \mathbb{C}^{n})
\]
for each $i,j$ with $1\leq i<j\leq n$.
The Zariski closure of the image of $f_p$ is called the {\em $p$-Cayley-Menger variety} ${\cal CM}_n^p$.

Since $f_p^{\times d}$ is a polynomial map (defined in (\ref{eq:f_d})), it can be also defined over $\mathbb{C}$. 
Similarly, the projection map $\pi_G$ is defined over $\mathbb{C}$
by $\pi_G:\mathbb{C}^{n\choose 2}\rightarrow \mathbb{C}^{E(G)}$.
For $\bp\in (\mathbb{C}^d)^n$,
let $\bx_i\in \mathbb{C}^n$ be the $i$-th coordinate vector of $\bp$.
Then 
\[
\pi_G\circ f_p^{\times d}(\bp)=\sum_{i=1}^d \pi_G\circ f_p(\bx_i).
\]
This in particular implies that
$\overline{{\rm image}\ \pi_G\circ f_p^{\times d}}$ is the $d$-secant of $\overline{\pi_G({\cal CM}_n^p)}$.
(The above equation only implies 
$\overline{{\rm image}\ \pi_G\circ f_p^{\times d}}\subseteq S_d(\overline{\pi_G({\cal CM}_n^p)})$,
but both sets are irreducible and have the same dimension,
so they are equal.)

\medskip
\noindent
{\bf Remark.}
Although we have defined the $p$-Cayley-Menger variety for any positive integer $p$, its $d$-secant does not coincide with the image of the $\ell_p$-distance measurement map when $p$ is odd.
Indeed, if $p$ is odd, the one-dimensional $\ell_p$-distance measurement map with the $p$-th power becomes a piecewise polynomial map of the form $({\rm sign}(x_i-x_j))(x_i-x_j)^p$.
Accordingly, our connection between $\ell_p$-global rigidity and the identifiability  of the $p$-Cayley-Menger variety (given in Proposition~\ref{prop:iden_global} below) is currently limited to the case where $p$ is even.
Extending this connection to the case where $p$ is odd remains an open problem.
\subsection{Defectivity}\label{subsec:defective}
In this subsection, we shall first look at a relation between the local rigidity of graphs $G$ and the defectivity of $\overline{\pi_G({\cal CM}_n^p)}$.
The content of this section is described in more detail in \cite{cruickshank2023identifiability} in a general form,
and we will provide an exposition by restricting our discussion to ${\cal CM}_n^p$ for completeness.

Suppose  an affine variety $\mathcal{V}$ is a cone in $\mathbb{C}^m$.
Then the $d$-secant $S_d(\mathcal{V})$ has dimension at most $\min\{d \dim \mathcal{V}, m\}$.
The latter number is called the {\em expected dimension}, and $\mathcal{V}$ is said to be {\em $d$-defective} if $\dim S_d(\mathcal{V})$ is smaller than the expected dimension.

When ${\cal V}=\overline{\pi_G({\cal CM}_n^p)}$, 
the $d$-defectivity is directly related to local rigidity as follows.

\begin{prop}\label{prop:defective}
Let $p$ be an integer with $p>2$.
Suppose $G$ is a connected graph with $n$ vertices and $|E(G)|\geq dn-d$.
Then $\overline{\pi_G({\cal CM}_n^p)}$ is not $d$-defective if and only if  $G$ is locally rigid in $\ell_p^d$.
\end{prop}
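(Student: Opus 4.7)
The plan is to establish a chain of equivalences that links defectiveness to the generic rank of $J(\pi_G\circ f_p^{\times d})$ and from there to local rigidity via Proposition~\ref{prop:inf}. Throughout I would use the identity
\[
\overline{\operatorname{image}\,\pi_G\circ f_p^{\times d}} \;=\; S_d\!\left(\overline{\pi_G({\cal CM}_n^p)}\right)
\]
recorded in Subsection~\ref{subsec:cayley-menger}, together with the standard fact that the dimension of the Zariski closure of the image of a polynomial map equals the generic rank of its Jacobian.

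First I would compute $\dim\overline{\pi_G({\cal CM}_n^p)}=n-1$ for any connected graph $G$ on $n$ vertices. The row of $J(\pi_G\circ f_p)(\bx)$ indexed by $ij\in E(G)$ has $p(x_i-x_j)^{p-1}$ in column $i$, $-p(x_i-x_j)^{p-1}$ in column $j$, and zeros elsewhere. At a generic $\bx$ every factor $(x_i-x_j)^{p-1}$ is nonzero, so row-scaling yields the signed vertex-edge incidence matrix of $G$; since $G$ is connected, that matrix has rank exactly $n-1$, and the asserted dimension follows.

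Combining this with the hypothesis $|E(G)|\geq dn-d$, the expected dimension of the secant $S_d(\overline{\pi_G({\cal CM}_n^p)})$ is $\min\{d(n-1),|E(G)|\}=dn-d$. Hence $\overline{\pi_G({\cal CM}_n^p)}$ is not $d$-defective iff $\dim S_d(\overline{\pi_G({\cal CM}_n^p)})=dn-d$, iff the generic rank of $J(\pi_G\circ f_p^{\times d})$ equals $dn-d$. Because the entries of this Jacobian are polynomials with rational coefficients, its generic rank over $\mathbb{C}$ coincides with the rank attained at any $\bp\in(\mathbb{R}^d)^n$ whose coordinates are algebraically independent over $\mathbb{Q}$; Proposition~\ref{prop:inf} then identifies this last condition with local rigidity of $G$ in $\ell_p^d$, closing the chain.

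The only step that deserves care is the first: one must ensure that genericity over $\mathbb{Q}$ of $\bx$ is strong enough both to keep each factor $(x_i-x_j)^{p-1}$ nonzero and to realise the full $n-1$ rank of the incidence matrix simultaneously. Both follow because the relevant minors are nonzero polynomials in the coordinates, hence nonvanishing outside a proper subvariety, and in particular at any point that is generic in the algebraic-independence sense used throughout the paper.
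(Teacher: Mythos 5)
Your proposal is correct and follows essentially the same route as the paper's proof: identify $S_d(\overline{\pi_G({\cal CM}_n^p)})$ with $\overline{\operatorname{image}\,\pi_G\circ f_p^{\times d}}$, note $\dim\overline{\pi_G({\cal CM}_n^p)}=n-1$ for connected $G$, equate non-defectiveness (given $|E(G)|\geq dn-d$) with the generic Jacobian rank being $dn-d$, and conclude via Proposition~\ref{prop:inf}. The extra details you supply (the incidence-matrix computation of the dimension and the remark that generic rank over $\mathbb{C}$ is attained at points algebraically independent over $\mathbb{Q}$) are sound elaborations of steps the paper leaves implicit.
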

\begin{proof}
Let ${\cal V}=\overline{\pi_G({\cal CM}_n^p)}$.
Then $\dim {\cal V}=n-1$ as long as $G$ is connected.
Hence, assuming that $G$ is connected and $|E(G)|\geq dn-d$, 
${\cal V}$ is $d$-defective if and only if $\dim S_d(\mathcal{V})<dn-d$.

Note that $\dim S_d(\mathcal{V})$ is equal to the dimension of the tangent space of $S_d(\mathcal{V})$ at a generic point.
Since $S_d(\mathcal{V})=\overline{{\rm image}\ \pi_G\circ f_p^{\times d}}$, for a generic $\bp$
the dimension of the tangent space of $S_d(\mathcal{V})$ at $f_p^{\times d}(\bp)$ is equal to 
$\rank J(\pi_G\circ f_p^{\times d})(\bp)$.
Therefore,  ${\cal V}$ is $d$-defective if and only if 
$\rank J(\pi\circ f_p^{\times d})(\bp)<dn-d$.

When $p$ is an integer with $p>2$, this condition is exactly the condition for the generic framework $(G,\bp)$ not to be locally rigid in $\ell_p^d$ by Proposition~\ref{prop:inf}. 
\end{proof}

\subsection{Identifiability}\label{subsec:iden}
Let ${\cal V}$ be an affine variety in $\mathbb{C}^m$ and suppose that 
${\cal V}$ is a cone.
${\cal V}$ is called {\em $d$-identifiable} if any generic point 
$x\in S_d({\cal V})$ can be written as 
$x=\sum_{k=1}^d x_k$ for unique $x_1,\dots, x_d\in {\cal V}$ up to a scaling of each $x_i$ and a permutation of indices.

When ${\cal V}$ is $\overline{\pi_G({\cal CM}_n^p)}$ and $\bp$ is generic, 
$\pi_G\circ f_p^{\times d}(\bp)$ is a generic point in the $d$-secant of 
$\overline{\pi_G({\cal CM}_n^p)}$.
Using this interpretation, we reduce the global rigidity problem to the $d$-identifiability problem of $\overline{\pi_G({\cal CM}_n^p)}$ in Proposition~\ref{prop:iden_global} below.

We first check that the $d$-identifiablity imposes the following connectivity condition.
\begin{prop}\label{prop:iden_connectivity}
Let $p$ be a positive integer, $d$ be an integer with $d\geq 2$, and $G$ be a graph with $n\geq 3$ vertices.
If $\pi_G({\cal CM}_n^p)$ is $d$-identifiable, then $G$ is 2-connected.
\end{prop}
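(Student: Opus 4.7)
The plan is to argue the contrapositive: if $G$ is not $2$-connected, then $\pi_G({\cal CM}_n^p)$ is not $d$-identifiable. (I assume $G$ has no isolated vertex, since such a vertex contributes no coordinates to $\pi_G({\cal CM}_n^p)$ and may be removed without altering the variety.) Since $G$ is not $2$-connected, there exist subgraphs $G_1, G_2$ with $V(G) = V(G_1) \cup V(G_2)$, $|V(G_1) \cap V(G_2)| \le 1$, $E(G) = E(G_1) \sqcup E(G_2)$, and both $E(G_1), E(G_2)$ nonempty: one takes $V(G_1) \cap V(G_2) = \emptyset$ when $G$ is disconnected, and $V(G_1) \cap V(G_2) = \{v\}$ for a cut vertex $v$ otherwise, splitting the components of $G - v$ between the two sides. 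Set $n_i := |V(G_i)|$.

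The first key step is the product decomposition
\[
\pi_G({\cal CM}_n^p) = \pi_{G_1}({\cal CM}_{n_1}^p) \times \pi_{G_2}({\cal CM}_{n_2}^p)
\]
inside $\mathbb{C}^{E(G)} = \mathbb{C}^{E(G_1)} \oplus \mathbb{C}^{E(G_2)}$. Inclusion $\subseteq$ is immediate since each edge in $E(G_i)$ depends only on $V(G_i)$. For $\supseteq$, given $y_i = \pi_{G_i}(f_p(\bx^{(i)}))$ with $\bx^{(i)} \in \mathbb{C}^{n_i}$, I would translate $\bx^{(2)}$ (which preserves $y_2$ because $f_p$ is translation-invariant) to agree with $\bx^{(1)}$ at the shared vertex when one exists, then concatenate to obtain $\bx \in \mathbb{C}^n$ with $\pi_G(f_p(\bx)) = (y_1, y_2)$; since the resulting image is Zariski-dense in the product, closure gives equality.

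Next I would exploit this product structure to produce many distinct decompositions of a generic point of $S_d(\pi_G({\cal CM}_n^p))$. Fix a generic $\bp \in (\mathbb{C}^d)^n$ with coordinate vectors $\bx_1, \ldots, \bx_d$, and set $z := \pi_G \circ f_p^{\times d}(\bp) = \sum_{k=1}^d \pi_G(f_p(\bx_k))$, a generic point of $S_d(\pi_G({\cal CM}_n^p))$. Writing each summand as $y_k := \pi_G(f_p(\bx_k)) = (y_k^{(1)}, y_k^{(2)})$ via the product description, for every $\sigma \in S_d$ the shuffled summand $y_k^\sigma := (y_{\sigma(k)}^{(1)}, y_k^{(2)})$ again lies in $\pi_{G_1}({\cal CM}_{n_1}^p) \times \pi_{G_2}({\cal CM}_{n_2}^p) = \pi_G({\cal CM}_n^p)$, and
\[
\sum_{k=1}^d y_k^\sigma = \Bigl(\sum_{k=1}^d y_{\sigma(k)}^{(1)},\ \sum_{k=1}^d y_k^{(2)}\Bigr) = z,
\]
so each $\sigma$ supplies a decomposition of $z$ into $d$ elements of the variety.

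Finally I would verify that distinct $\sigma$ yield decompositions that remain inequivalent under the scaling-and-permutation equivalence built into identifiability. An equivalence $y_k^\sigma = \lambda_k y_{\rho(k)}^\tau$ splits into $y_{\sigma(k)}^{(1)} = \lambda_k y_{\tau(\rho(k))}^{(1)}$ and $y_k^{(2)} = \lambda_k y_{\rho(k)}^{(2)}$, and genericity of $\bp$ forces the $y_k^{(i)}$'s to be pairwise non-proportional across different $k$, giving $\rho = \mathrm{id}$, $\lambda_k = 1$, and $\sigma = \tau$. Since $d \ge 2$ yields $d! \ge 2$ pairwise-inequivalent decompositions of $z$, $d$-identifiability fails. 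The main technical point, which I expect to be the principal obstacle, is this last genericity check: it reduces to showing that generic points of $\pi_{G_i}({\cal CM}_{n_i}^p)$ are not scalar multiples of one another, which is routine by a dimension argument when $\dim \pi_{G_i}({\cal CM}_{n_i}^p) \ge 2$, and reduces to the distinctness of the scalars $y_k^{(2)}$ in the boundary case $\dim \pi_{G_i}({\cal CM}_{n_i}^p) = 1$.
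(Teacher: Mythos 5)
Your proposal is correct and is essentially the paper's own argument: the paper realises your shuffled decomposition concretely by permuting the $d$ spatial coordinates of $\bp$ on the vertices of one side of the $1$-separation (after translating the cut vertex to the origin), which yields exactly your mixed summands $(y_{\sigma(k)}^{(1)},y_k^{(2)})$ of the same generic point of the $d$-secant. Your explicit check that the two decompositions are inequivalent up to scaling and permutation is in fact slightly more careful than the paper's, which only remarks that the summands $\pi_G\circ f_p(\bp_i)$ and $\pi_G\circ f_p(\bq_i)$ differ.
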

\begin{proof}
Consider a generic $d$-dimensional framework $(G,\bp)$.
Then $\pi_G\circ f_p^{\times d}(\bp)$ is a generic point in the $d$-secant of 
$\overline{\pi_G({\cal CM}_n^p)}$.
If $G$ is not 2-connected, then by $n\geq 3$, $G$ can be written as $G=H_1\cup H_2$ such
that $E(H_1)\cap E(H_2)=\emptyset$ and $|V(H_1)\cap V(H_2)|\leq 1$.
We only consider the case when $|V(H_1)\cap V(H_2)|=1$ since the case when $|V(H_1)\cap V(H_2)|=0$ is easier.

Let $v$ be the vertex in $V(H_1)\cap V(H_2)$.
By translations, we may assume that $\bp(v)$ is at the origin.
We consider another framework $(G,\bq)$ obtained by permutating the coordinates of $\bp$ only on $V(H_2)\setminus V(H_1)$ (keeping the points of $V(H_1)$ fixed).
We have $\pi_G\circ f_p^{\times d}(\bp)=\pi_G\circ f_p^{\times d}(\bq)$,
while 
$\pi_G\circ f_p(\bp_i)$ and $\pi_G\circ f_p(\bq_i)$ are different for the coordinate vectors $\bp_i$ and $\bq_i$ by $d\geq 2$.
Hence, $\overline{\pi_G({\cal CM}_n^p)}$ is not $d$-identifiable.
\end{proof}

\begin{prop}\label{prop:iden_global}
Let $p$ be an even positive integer, $d$ be an integer with $d\geq 2$, and $(G,\bp)$ be a generic framework with $n\geq 3$ vertices.
If $\overline{\pi_G({\cal CM}_n^p)}$ is $d$-identifiable over $\mathbb{C}$, then $(G,\bp)$ is globally rigid in $\ell_p^d$.
\end{prop}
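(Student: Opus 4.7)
The plan is to reduce the global rigidity of $(G,\bp)$ to the $d$-identifiability hypothesis via the coordinate-wise decomposition of $f_p^{\times d}$, and then to promote the resulting algebraic identifications to $\ell_p^d$-congruences using genuinely real arguments.

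First I would set up the link with identifiability. If $(G,\bq)$ is equivalent to $(G,\bp)$, writing $\bx_k,\by_k\in\mathbb{R}^n$ for the $k$-th coordinate vectors of $\bp$ and $\bq$ yields
\[
\sum_{k=1}^{d} \pi_G\circ f_p(\bx_k) \;=\; \sum_{k=1}^{d} \pi_G\circ f_p(\by_k),
\]
and, since $\bp$ is generic, this common value is a generic point of $S_d(\overline{\pi_G({\cal CM}_n^p)})$. Proposition~\ref{prop:iden_connectivity} already tells me $G$ is 2-connected, while $d$-identifiability supplies a permutation $\sigma\in S_d$ and scalars $\lambda_k\in\mathbb{C}^*$ such that $\pi_G\circ f_p(\by_k)=\lambda_k\,\pi_G\circ f_p(\bx_{\sigma(k)})$ for every $k$.

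Second I would show all $\lambda_k$ equal $1$. Substituting back into the sum yields
\[
\sum_{j=1}^{d} \bigl(\lambda_{\sigma^{-1}(j)}-1\bigr)\,\pi_G\circ f_p(\bx_j)=0.
\]
Since $d$-identifiability forces $S_d(\overline{\pi_G({\cal CM}_n^p)})$ to be non-defective, the tangent spaces $T_{\pi_G\circ f_p(\bx_k)}\overline{\pi_G({\cal CM}_n^p)}$ are in direct sum at a generic $d$-tuple, and because $\overline{\pi_G({\cal CM}_n^p)}$ is a cone, each $\pi_G\circ f_p(\bx_k)$ lies in its own tangent space. Thus $\pi_G\circ f_p(\bx_1),\dots,\pi_G\circ f_p(\bx_d)$ are linearly independent in $\mathbb{C}^E$, so every coefficient above vanishes and $\lambda_k=1$.

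Third, I would unwind $\pi_G\circ f_p(\by_k)=\pi_G\circ f_p(\bx_{\sigma(k)})$ over $\mathbb{R}$. For each edge $ij\in E(G)$ we have $((\by_k)_i-(\by_k)_j)^p=((\bx_{\sigma(k)})_i-(\bx_{\sigma(k)})_j)^p$, and since $p$ is even, $(\by_k)_i-(\by_k)_j=\zeta_{ij}^{(k)}\bigl((\bx_{\sigma(k)})_i-(\bx_{\sigma(k)})_j\bigr)$ with $\zeta_{ij}^{(k)}\in\{\pm 1\}$. Summing around any cycle of $G$ and invoking the algebraic independence of the entries of $\bx_{\sigma(k)}$ forces the signs to be constant on the cycle; by 2-connectedness of $G$ (ear decomposition), a single sign $s_k\in\{\pm 1\}$ works for all edges, so $\by_k = s_k\bx_{\sigma(k)}+c_k\mathbf{1}$ for some $c_k\in\mathbb{R}$. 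Hence $\bq$ is obtained from $\bp$ by permuting coordinates via $\sigma$, reflecting along those axes with $s_k=-1$, and translating by $(c_1,\dots,c_d)$, which is exactly an $\ell_p^d$-isometry; so $(G,\bp)$ is globally rigid.

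The step I expect to be the main obstacle is the passage from $d$-identifiability to $\mathbb{C}^E$-linear independence of $d$ generic points of the variety. Extracting this cleanly from the bare ``unique up to scaling and permutation'' definition requires invoking the non-defectivity consequence of identifiability and exploiting the cone structure so that the base points lie in their own tangent spaces. The cycle-consistency argument in the third step is routine but not wholly automatic: it genuinely relies on the 2-connectedness provided by Proposition~\ref{prop:iden_connectivity} and the algebraic independence supplied by genericity.
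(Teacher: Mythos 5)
Your proposal is correct and follows the paper's overall skeleton (identifiability gives a coordinate-wise match up to scalars and a permutation; the scalars are shown to be $1$; 2-connectedness then upgrades the per-coordinate data to a congruence), but the two later steps are handled differently. For the scalars, the paper picks a vertex of degree at least $d$ (this degree bound is itself extracted from identifiability via local rigidity), obtains a nonsingular $d\times d$ system in the quantities $d_{ij}^p$ from genericity, and finishes with a positivity argument using that $p$ is even; you instead deduce $\lambda_k=1$ from linear independence of the $d$ points $\pi_G\circ f_p(\bx_1),\dots,\pi_G\circ f_p(\bx_d)$ in $\mathbb{C}^{E}$, which is cleaner and avoids the realness step. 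The one place needing care is your justification of that independence: non-defectivity alone does not give the direct-sum of tangent spaces when $d\dim\mathcal{V}>|E|$, so you should invoke the slightly stronger (standard) fact that $d$-identifiability forces the secant map to be generically finite, hence $\dim S_d(\overline{\pi_G({\cal CM}_n^p)})=d(n-1)\le |E|$, and then Terracini plus the cone structure gives independence; alternatively, independence follows elementarily from the observation that $\langle \overline{\pi_G({\cal CM}_n^p)}\rangle=\mathbb{C}^{E}$ (no linear relation among the forms $(x_i-x_j)^p$) together with $|E|\ge d$, which the paper secures via the minimum-degree observation, and which is also how its Lemma~\ref{lem:number_of_edges} argues. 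For the final step, the paper simply cites Proposition~\ref{prop:1d} after establishing 2-connectedness from Proposition~\ref{prop:iden_connectivity}, whereas you reprove the one-dimensional statement by the sign-consistency-on-cycles argument; your argument is correct and is essentially the proof of that proposition, so nothing is lost, it is just less economical.
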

\begin{proof}
To see the global rigidity of $(G,\bp)$, 
pick any framework $(G,\bq)$  equivalent  to $(G,\bp)$ in $\ell_p^d$-space.
Let $\bp_i$ (resp.~$\bq_i$) be the $i$-th coordinate vector of $\bp$ (resp.~$\bq$) for each $i$.
Then 
\begin{equation}\label{eq:prop_iden_global}
\sum_{i=1}^d \pi_G\circ f_p(\bp_i)=\pi_G\circ f_p^{\times d}(\bp)=\pi_G\circ f_p^{\times d}(\bq)=\sum_{i=1}^d \pi_G\circ f_p(\bq_i).
\end{equation}
Since $\bp$ is generic, $\pi_G\circ f_p^{\times d}(\bp)$ is a generic point in the $d$-secant of $\overline{\pi_G({\cal CM}_n^p)}$.
Hence, by the $d$-identifiability of $\overline{\pi_G({\cal CM}_n^p)}$,
(\ref{eq:prop_iden_global}) implies that there exists a permutation $\sigma$ on $\{1,2\dots, d\}$ such that $\pi_G\circ f_p(\bq_i)=\lambda_i \left( \pi_G\circ f_p(\bp_{\sigma(i)})\right)$ for some $\lambda_i\in \mathbb{C}$ for each $i=1,\dots, d$. Without loss of generality, we may assume that the permutation $\sigma$ is the identity.
Then 
\begin{equation}\label{eq:prop33}
\pi_G\circ f_p(\bq_i)=\lambda_i \left( \pi_G\circ f_p(\bp_i)\right) \quad \text{for each $i=1,\dots, d$}.
\end{equation}

Suppose that $\lambda_i=1$ for all $i$.
Then $\pi_G\circ f_p(\bq_i)=\pi_G\circ f_p(\bp_i)$,
and hence $(G,\bp_i)$ and $(G,\bq_i)$ are equivalent as one-dimensional frameworks in $\ell_p^1$. Since $G$ is  2-connected by Proposition~\ref{prop:iden_connectivity}, one can apply Proposition~\ref{prop:1d} to show that $(G,\bq_i)$ is congruent to  $(G,\bp_i)$ for all $i$.
Hence, $(G,\bq)$ is congruent to $(G,\bp)$, and $(G,\bp)$ is globally rigid.

We now show that $\lambda_i=1$ for all $i$.
Since $\overline{\pi_G({\cal CM}_n^p)}$ is $d$-identifiable,
the minimum vertex degree of $G$ is at least $d$.
(Here we used the fact that, if the minimum degree of $G$ is less than $d$, then $(G,\bp)$ is not locally rigid~\cite{kitson2014infinitesimal} and hence a continuous motion of $(G,\bp)$ gives a certificate that $\pi_G({\cal CM}_n^p)$ is not $d$-identifiable.)
Pick any vertex $v$ of $G$ and let $u_1,\dots, u_d$ be distinct neighbours of $v$ in $G$. Let $d_{ij}$ be the $i$-th coordinate of $\bp(u_j)-\bp(v)$ for each $1\leq i, j\leq d$. 
Then, the $vu_j$-th entry of $\pi_G\circ f_p(\bp_i)$ is $d_{ij}^p$.
On the other hand, by (\ref{eq:prop33}),
 the $vu_j$-th entry of $\pi_G\circ f_p(\bq_i)$ is $\lambda_i d_{ij}^p$.
Substituting them into (\ref{eq:prop_iden_global}), we obtain
\begin{equation}\label{eq:d_ij}
\sum_{k=1}^d d_{ij}^p = \sum_{i=1}^d \lambda_i^p d_{ij}^p \qquad (j=1,\dots, d).
\end{equation}
Since $\bp$ is generic, the set of $d_{ij}$ over $1\leq i, j\leq d$ is algebraically independent over $\mathbb{Q}$,
and hence the $d\times d$ matrix $(d_{ij}^p)_{1\leq i,j\leq d}$ is non-singular. Thus (\ref{eq:d_ij}) implies $\lambda_i^p=1$ for each $i$.
Since the entries of $\bp$ and $\bq$ are real and $p$ is even, 
$\pi_G\circ f_p(\bq_i)$ and $\pi_G\circ f_p(\bp_i)$ are both non-negative real numbers. Hence, by (\ref{eq:prop33}) and $\lambda_i^p=1$, $\lambda_i=1$ follows.
This completes the proof.
\end{proof}

\subsection{Tangential weak defectivity}\label{subsec:twd}
The identifiability problem of varieties has been extensively studied in algebraic geometry.
In this paper, we shall use the idea of tangential weak defectivity introduced by Chiantini and Ottaviani~\cite{chiantini2012generic} to analyse the identifiability of $\overline{\pi_G({\cal CM}_n^p)}$.

Let ${\cal V}\subseteq \mathbb{C}^m$ be the affine cone of a projective variety and $x_1,\dots, x_d$ be generic points in ${\cal V}$.
The {\em $d$-tangent contact locus} at $x_1,\dots, x_d$ is defined by
\[
C_{x_1,\dots, x_d}{\cal V}:=\overline{\{y\in {\cal V}: \text{$y$ is non-singular, } T_y{\cal V} \subset \langle T_{x_1} {\cal V}, \dots, T_{x_d} {\cal V}\rangle \}}.
\]
Since $x_1,\dots, x_d$ are generic, 
the Terracini lemma implies $T_{\sum_{i=1}^d x_i} S_d({\cal V})= \langle T_{x_1}{\cal V},\dots, T_{x_d}{\cal V}\rangle$.
Hence the $d$-tangent contact locus is equivalently defined as
\[
C_{x_1,\dots, x_d}{\cal V}:=\overline{\{y\in {\cal V}: \text{$y$ is non-singular, } T_y{\cal V} \subset T_{\sum_{i=1}^d x_i} S_d({\cal V}) \}},
\]
Since ${\cal V}$ is a cone, $\dim C_{x_1,\dots, x_d}{\cal V}\geq 1$.
${\cal V}$ is  {\em $d$-tangentially weakly defective} if 
an irreducible component of $C_{x_1,\dots, x_d}{\cal V}$ that contains $x_i$ has dimension at least two for some $i$.

Chiantini and Ottaviani~\cite{chiantini2012generic} showed that, if  ${\cal V}$ is not $d$-tangentially weakly defective, then ${\cal V}$ is $d$-identifiable.
In terms of the $p$-Cayley-Menger varieties, we have the following.
\begin{prop}\label{prop:twnd_iden}
Let $p$ be an integer with $p>2$ and $G$ be a connected graph with $n \ge 3$ vertices.
If $\overline{\pi_G({\cal CM}_n^p)}$ is not $d$-tangentially weakly defective,
then  $\overline{\pi_G({\cal CM}_n^p)}$ is $d$-identifiable.
\end{prop}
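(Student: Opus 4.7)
The plan is to obtain this proposition as a direct specialisation of the general result of Chiantini and Ottaviani~\cite{chiantini2012generic}, which states that any irreducible affine cone in $\mathbb{C}^m$ that is not $d$-tangentially weakly defective is $d$-identifiable. Thus the task reduces to verifying that $\mathcal{V} := \overline{\pi_G({\cal CM}_n^p)}$ satisfies the two hypotheses of this theorem: irreducibility and the cone property.

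First I would note that irreducibility is immediate: $\mathcal{V}$ is the Zariski closure of the image of the polynomial map $\pi_G \circ f_p : \mathbb{C}^n \to \mathbb{C}^{\binom{n}{2}}$ (followed by the coordinate projection $\pi_G$) from the irreducible variety $\mathbb{C}^n$, and the Zariski closure of the image of an irreducible variety under a morphism is irreducible. For the cone property I would observe that $\pi_G \circ f_p$ is homogeneous of degree $p$, so $\pi_G \circ f_p(\lambda \bx) = \lambda^p \, \pi_G \circ f_p(\bx)$; since $\mathbb{C}$ is algebraically closed, every element of $\mathbb{C}$ is a $p$-th power, so the image is closed under scaling by every scalar, and hence so is its Zariski closure.

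With these facts in hand, the Chiantini--Ottaviani theorem applies to $\mathcal{V}$ and yields its $d$-identifiability. The only subtlety I expect to encounter is a convention check: Chiantini and Ottaviani phrase their result for projective varieties, so I would pass to the projectivisation of $\mathcal{V}$ in $\mathbb{P}^{\binom{n}{2}-1}$ and then translate back. Because $\mathcal{V}$ is an affine cone and the definition of $d$-identifiability adopted in Section~\ref{subsec:iden} already quotients out scaling of each summand $x_i$ and permutation of indices, the projective and affine-cone formulations coincide, and this step is a formal translation rather than a genuine obstacle. Should any additional non-degeneracy hypothesis be required by the precise form of~\cite{chiantini2012generic}, it is supplied by the assumption that $G$ is connected with $n \geq 3$, which guarantees $\dim \mathcal{V} = n-1 \geq 2$ and hence that $\mathcal{V}$ is a non-trivial cone.
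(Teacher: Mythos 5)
Your proposal is correct in substance, and it is consistent with the paper's own framing (the paper itself attributes the general implication to Chiantini--Ottaviani just before the proposition), but it takes a genuinely different route from the paper's actual proof. You treat the theorem of \cite{chiantini2012generic} as a black box and reduce the task to checking that $\mathcal{V}=\overline{\pi_G({\cal CM}_n^p)}$ is an irreducible affine cone; both checks are fine (irreducibility because $\mathcal{V}$ is the closure of the image of $\mathbb{C}^n$ under a polynomial map, the cone property from degree-$p$ homogeneity together with every complex number being a $p$-th power). The paper instead reproves the implication for this specific variety by adapting the argument of \cite{chiantini2014algorithm} for Segre varieties: it first shows (Lemma~\ref{lem:number_of_edges}) that non-tangential weak defectiveness forces local rigidity, a row dependence, and hence $|E(G)|\geq d(n-1)+1$, then uses the generalised trisecant lemma (Lemma~\ref{lem:trisecant}) to see that an alternative summand cannot lie in $\langle x_1,\dots,x_d\rangle$, and finally deforms a putative second decomposition to produce a curve in the tangential contact locus, contradicting non-defectiveness. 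Your route is shorter and applies verbatim to every positive $p$; the paper's route is self-contained in the rigidity language and makes the combinatorial consequences of non-defectiveness explicit. Two caveats if you flesh yours out: first, you must check that the notion of $d$-tangential weak defectiveness in \cite{chiantini2012generic} (stated projectively, with the contact locus taken through the chosen points as in \cite{chiantini2014algorithm}) agrees with the affine-cone definition of Section~\ref{subsec:twd} --- you flag the projective/affine translation, which is indeed formal; second, your fallback justification of non-degeneracy is not correct as stated, since $\dim\mathcal{V}=n-1\geq 2$ does not prevent $\mathcal{V}$ from lying in a proper linear subspace of $\mathbb{C}^{E(G)}$. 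Non-degeneracy does hold for $p\geq 2$ because the polynomials $(x_i-x_j)^p$, $ij\in E(G)$, are linearly independent (the monomial $x_i^{p-1}x_j$ occurs only in $(x_i-x_j)^p$), while for $p=1$ the variety is a linear space, hence automatically $d$-tangentially weakly defective, so the statement is vacuous in that case.
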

Since Proposition~\ref{prop:twnd_iden} is a key fact in this paper and it seems a useful tool in other rigidity problems, we are going to give a proof in Section~\ref{subsec:proof of twnd_iden}. 

In view of Proposition~\ref{prop:twnd_iden}, the next question we should address is how we can check the $d$-tangential weak non-defectivity. 
Bocci, Chiantini, Ottaviani, and Vannieuwenhoven~\cite{BCC,chiantini2014algorithm,chiantini2017generic} gave a linear algebraic technique to test this property for Segre varieties.
Interestingly, an adaptation of their idea to the $p$-Cayley-Menger varieties leads to our graph Laplacian condition in Section~\ref{subsec:main}.
The details are explained in the next lemma.
\begin{lemma}\label{lem:COV_test}
Let $p$ be an integer with $p>2$ and $(G,\bp)$ be a generic $d$-dimensional framework with $n \ge 2$ vertices.
If $(G,\bp)$ has a $k$-th coordinated self-stress $\omega^k$ such that 
$\rank L_{G,\omega^k}=n-2$ for all $k\in\{1,\dots, d\}$, then $\overline{\pi_G({\cal CM}_n^p)}$ is not $d$-tangentially weakly defective.
\end{lemma}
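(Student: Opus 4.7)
The strategy is to bound, for each $k \in \{1, \dots, d\}$, the dimension of the irreducible component of the $d$-tangent contact locus $C_{x_1, \dots, x_d}{\cal V}$ containing $x_k := \pi_G \circ f_p(\bx_k)$, where ${\cal V} = \overline{\pi_G({\cal CM}_n^p)}$ and $\bx_k \in \mathbb{C}^n$ denotes the $k$-th coordinate vector of $\bp$. By genericity of $\bp$, the decomposition $\pi_G \circ f_p^{\times d}(\bp) = \sum_k x_k$ together with the Terracini lemma identifies $T_{\sum_k x_k} S_d({\cal V})$ with the row span of the block Jacobian $J(\pi_G \circ f_p^{\times d})(\bp)$, so a linear form on $\mathbb{C}^E$ annihilates this tangent space precisely when it is a self-stress of $(G,\bp)$.

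Writing $y = \pi_G \circ f_p(\bz)$ for arbitrary $\bz \in \mathbb{C}^n$, the inclusion $T_y {\cal V} \subset T_{\sum_k x_k} S_d({\cal V})$ then translates into the simultaneous vanishing, over every self-stress $\omega$ of $(G,\bp)$ and every vertex $i$, of the polynomial $\Phi_\omega^i(\bz) := \sum_{j: ij \in E(G)} \omega(ij)(z_j - z_i)^{p-1}$. Let $\tilde C \subset \mathbb{C}^n$ be the subvariety cut out by these equations; it is the $(\pi_G \circ f_p)$-preimage of $C_{x_1,\dots, x_d}{\cal V}$. A direct differentiation yields the key identity $J((\Phi_\omega^i)_{i \in V})(\bx_k) = -(p-1) L_{G,\omega^k}$, where $\omega^k$ is the $k$-th coordinated stress derived from $\omega$.

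Now fix $k$ and let $\omega$ be the self-stress supplied by the hypothesis, so that $\rank L_{G,\omega^k} = n-2$. The kernel of $L_{G,\omega^k}$ always contains $\bm{1}$, and the relation $L_{G,\omega^k} \bx_k = 0$ is obtained by reading off the $k$-th coordinate of the defining self-stress equation (\ref{eq:equilibrium_p}). By genericity of $\bp$, the vectors $\bm{1}$ and $\bx_k$ are linearly independent, so $\ker L_{G,\omega^k} = \spa(\bm{1}, \bx_k)$ is exactly $2$-dimensional. Consequently the tangent space of $\tilde C$ at $\bx_k$ lies in this $2$-plane, the irreducible component of $\tilde C$ through $\bx_k$ has dimension at most $2$, and after quotienting by the generic $1$-dimensional translation fibre of $\pi_G \circ f_p$, the component of $C_{x_1, \dots, x_d}{\cal V}$ through $x_k$ has dimension at most $1$. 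Since ${\cal V}$ is a cone this dimension is exactly $1$, which is the minimum possible. Running the argument for every $k$ yields the claimed $d$-tangentially weakly non-defectiveness.

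The main technical hurdle is the dimension bookkeeping that relates an irreducible component of $\tilde C \subset \mathbb{C}^n$ through $\bx_k$ to the corresponding component of $C_{x_1, \dots, x_d}{\cal V} \subset \mathbb{C}^E$ through $x_k$; the cleanest route is to check that the restriction of $\pi_G \circ f_p$ to the component in question has generically $1$-dimensional fibre, which reduces to the translation invariance of $\pi_G \circ f_p$ together with the connectedness of $G$. The Jacobian identification $J((\Phi_\omega^i)_{i\in V})(\bx_k) = -(p-1) L_{G,\omega^k}$ is the computational heart of the argument and is straightforward once $\omega^k$ is written out.
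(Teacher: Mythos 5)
Your proposal is correct and follows essentially the same route as the paper's proof: you translate membership in the contact locus into the vanishing of the one-dimensional equilibrium equations $\Phi_\omega^i$, identify the Jacobian of this system at $\bx_k$ with $\pm(p-1)L_{G,\omega^k}$, use $\rank L_{G,\omega^k}=n-2$ to bound the local dimension of the preimage by $2$, and subtract one for translation invariance of $\pi_G\circ f_p$. The only deviations are cosmetic (cutting $\tilde C$ by all self-stresses rather than the single hypothesised one, the explicit identification $\ker L_{G,\omega^k}=\langle \bm{1},\bx_k\rangle$, and a row/column transposition in describing the Jacobian), none of which affects the argument.
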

\begin{proof}
For simplicity we use the following notations.
For $i=1,\dots, d$, let $\bp_i\in \mathbb{R}^n$ be the $i$-th coordinate vector of $\bp$,
and let $x_i=\pi_G \circ f_p(\bp_i)$. 
Let ${\cal V}=\overline{\pi_G({\cal CM}_n^p)}$,
and ${\cal C}$ be the $d$-tangent contact locus of ${\cal V}$ at $x_1,\dots, x_d$.
Let ${\cal C}'$ be an irreducible component of ${\cal C}$ that contains $x_k$.
Instead of proving $\dim {\cal C}'=1$ directly, we shall look at $(\pi_G \circ f_p)^{-1}({\cal C}'\cap {\cal V})$.
Since $\pi_G \circ f_p$ is invariant by the addition of a constant to each coordinate
and $\pi_G\circ f_p$ is locally surjective around $\bp$ onto the image, $(\pi_G \circ f_p)^{-1}({\cal C}'\cap {\cal V})$ has at least one larger dimension than ${\cal C}'$.
So in order to prove $\dim {\cal C}'=1$,
it is sufficient to prove that $(\pi_G \circ f_p)^{-1}({\cal C}'\cap {\cal V})$ is contained in a two-dimensional variety.

Now, by the lemma assumption $(G,\bp)$ has a $k$-th coordinated self-stress 
$\omega^k$ such that $\rank L_{G,\omega^k}=n-2$.
By the definition of $k$-th coordinated self-stresses, there is a self-stress $\omega$ of $(G,\bp)$ from which $\omega^k$ is constructed via (\ref{eq:coordinated}).
Since the equilibrium condition  (\ref{eq:equilibrium_p}) for $\omega$ is equivalent to 
being in the left kernel of $J(\pi_G\circ f_p^{\times d})(\bp)$, 
the fact that $\omega$ is a self-stress of $(G,\bp)$
implies that $\omega$ is in the left kernel of $J(\pi_G\circ f_p^{\times d})(\bp)$ as a vector in $\mathbb{C}^{E(G)}$.
Moreover, the column space of $J(\pi_G\circ f_p^{\times d})(\bp)$ is 
$\langle T_{x_1}{\cal V}, T_{x_2}{\cal V}, \dots, T_{x_d}{\cal V}\rangle$,
so we conclude that $\omega$ is 
in the orthogonal complement of 
$\langle T_{x_1}{\cal V}, T_{x_2}{\cal V}, \dots, T_{x_d}{\cal V}\rangle$.

For each $\by \in (\pi_G \circ f_p)^{-1}({\cal C}'\cap {\cal V})$, 
we have 
$T_{\pi_G \circ f_p(\by)}{\cal V}\subseteq \langle T_{x_1}{\cal V}, T_{x_2}{\cal V}, \dots, T_{x_d}{\cal V}\rangle$ by the definition of ${\cal C}$,
and thus $\omega$ is in the orthogonal complement of $T_{\pi_G \circ f_p(\by)}{\cal V}$. The latter condition is equivalent to saying that 
$\omega$ is a self-stress of the one-dimensional framework $(G,\by)$.
Hence, we obtain
\[
(\pi_G \circ f_p)^{-1}({\cal C}'\cap {\cal V})\subseteq {\cal S}_{\omega}:=\{\by\in \mathbb{C}^n: \text{$\omega$ is a self-stress of $(G,\by)$}\}. 
\]

By the definition of self-stresses, 
$\by\in {\cal S}_{\omega}$ if and only if 
\[
\sum_{v:uv\in E(G)} \omega(uv)(\by(v)-\by(u))^{p-1}=0 \qquad (u\in V(G)).
\]
This is a system of $n$ polynomial equations with $n$ variables $\by(1), \by(2), \dots, \by(n)$.
Let $h_{\omega}:\mathbb{C}^n\rightarrow \mathbb{C}^n$ be the polynomial map representing this system of polynomial equations.
Then $\by\in {\cal S}_{\omega}$ if and only if $h_{\omega}(\by)=0$.
So ${\cal S}_{\omega}$ is an algebraic set.
Observe that $Jh_{\omega}(\by)$ is exactly the Laplacian matrix $L_{G,\omega'}$ of $G$ weighted by $\omega'$ given by 
\[
\omega'(uv):=(p-1)\cdot \omega(uv)\cdot (\by(v)-\by(u))^{p-2}\qquad (uv\in E(G)).
\]
Hence, $Jh_{\omega}(\bp_k)=(p-1)L_{G,\omega^k}$.
By the lemma assumption, $\rank L_{G,\omega^k}=n-2$.
So $\dim \ker Jh_{\omega}(\bp_k)=2$.
Since $\dim \ker Jh_{\omega}(\by)\geq 2$ for any $\by\in {\cal S}_{\omega}$,
we conclude that 
$\bp_k$ is a non-singular point in ${\cal S}_{\omega}$,
and the irreducible component of ${\cal S}_{\omega}$ that contains $\bp_k$ is two-dimensional.
This completes the proof.
\end{proof}

Combining the results so far, we have the following sufficient condition for global rigidity in $\ell_p^d$.
\begin{theorem}\label{thm:COV_test}
Let $p$ be an even positive integer with $p\neq 2$, $d$ be a positive integer, and $(G,\bp)$ be a generic framework in $\ell_p^d$ with $n \ge 3$ vertices.
If  $(G,\bp)$ has a $k$-th coordinated self-stress $\omega^k$ such that $\rank L_{G,\omega^k}=n-2$ for all $k$, then 
$(G,\bp)$ is globally rigid in $\ell_p^d$.
\end{theorem}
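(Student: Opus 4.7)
My plan is to obtain the theorem as a direct concatenation of the three preceding results in this section, handling low-dimensional corner cases separately. The hypothesis is a graph-theoretic/linear-algebraic condition on Laplacians, while the conclusion is a geometric statement about frameworks, so the strategy is to translate the condition through the algebraic geometry of the $p$-Cayley-Menger variety.

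First I would dispose of the $d=1$ case. The assumption supplies a self-stress $\omega$ of $(G,\bp)$ with $\rank L_{G,\omega^1}=n-2$. Because $L_{G,\omega^1}$ is a Laplacian and $n\geq 3$, Lemma~\ref{lem:laplacian_property} (applied contrapositively) forces $G$ to be $2$-connected: any $1$-separator would split the rank additively and, with each piece contributing at most the size of its vertex set minus one, prevent us from reaching $n-2$ only if some piece were too small. Once $G$ is $2$-connected, Proposition~\ref{prop:1d} directly yields global rigidity of the generic framework $(G,\bp)$ in $\ell_p^1$.

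For $d\geq 2$, the argument is a three-step chain. Step one: apply Lemma~\ref{lem:COV_test} to translate the Laplacian rank hypothesis into the statement that $\overline{\pi_G(\mathcal{CM}_n^p)}$ is not $d$-tangentially weakly defective. This is where the genericity of $\bp$ and the uniform rank condition across all coordinates $k$ get consumed. Step two: invoke Proposition~\ref{prop:twnd_iden} to upgrade tangential weak non-defectiveness to $d$-identifiability of $\overline{\pi_G(\mathcal{CM}_n^p)}$ over $\mathbb{C}$. Step three: apply Proposition~\ref{prop:iden_global}, which converts $d$-identifiability into generic global rigidity of $(G,\bp)$ in $\ell_p^d$ (this step is where the hypothesis that $p$ is even and $p\neq 2$ is essential, through the extraction of $\lambda_k=1$ from $\lambda_k^p=1$ using nonnegativity of real $p$-th powers).

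No real obstacle arises, because the heavy lifting has already been done: Lemma~\ref{lem:COV_test} encapsulates the adaptation of the Bocci--Chiantini--Ottaviani--Vannieuwenhoven tangent-contact-locus technique to the $p$-Cayley-Menger variety, Proposition~\ref{prop:twnd_iden} imports the Chiantini--Ottaviani identifiability criterion, and Proposition~\ref{prop:iden_global} performs the reduction from identifiability to global rigidity via the $1$-dimensional case. The only point demanding care is ensuring that the hypotheses match across the three invocations, in particular that the framework is generic (given), that $n\geq 3$ (given), that the rank condition holds simultaneously for every coordinate $k\in\{1,\dots,d\}$ (required by Lemma~\ref{lem:COV_test}), and that $p$ is even with $p\neq 2$ (required for Proposition~\ref{prop:iden_global}). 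All of these are part of the theorem's hypothesis, so the proof reduces to verifying the chain and citing the three results.
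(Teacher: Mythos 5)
Your proposal is correct and follows essentially the same route as the paper's own proof: Lemma~\ref{lem:laplacian_property} for 2-connectivity, Lemma~\ref{lem:COV_test} to get $d$-tangential weak non-defectiveness, Proposition~\ref{prop:twnd_iden} for $d$-identifiability, and Proposition~\ref{prop:iden_global} for global rigidity. Your separate treatment of $d=1$ via Proposition~\ref{prop:1d} is a reasonable extra precaution (Proposition~\ref{prop:iden_global} assumes $d\geq 2$, a point the paper's proof passes over silently), but otherwise the argument coincides with the paper's.
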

\begin{proof}
Suppose the stress condition holds.
By Lemma~\ref{lem:laplacian_property}, $G$ is 2-connected.
Also, by Lemma~\ref{lem:COV_test}, ${\cal V}=\overline{\pi_G({\cal CM}_n^p)}$ is not $d$-tangentially weakly defective.
By Proposition~\ref{prop:twnd_iden}, ${\cal V}$ is $d$-identifiable.
Finally, by the 2-connectivity of $G$, $n\geq 3$, and the $d$-identifiability, 
the global rigidity of $(G,\bp)$ follows from Proposition~\ref{prop:iden_global}.
\end{proof}


\subsection{Proof of Proposition~\ref{prop:twnd_iden}}\label{subsec:proof of twnd_iden}
Our proof of Proposition~\ref{prop:twnd_iden} is an adaptation of that of Chiantini, Ottaviani, and Vannieuwenhoven~\cite{chiantini2014algorithm} for Segre varieties.
We need the following lemma.
\begin{lemma}\label{lem:trisecant}
Let $p$ be an integer with $p> 2$, $G=(V,E)$ be a graph, 
and $x_1, x_2,\dots, x_d$ be generic points in $\overline{\pi_G({\cal CM}_n^p)}\subset \mathbb{C}^{E(G)}$.
If $G$ is connected with $|E(G)|\geq d(n-1)+1$, then 
$\overline{\pi_G({\cal CM}_n^p)}\cap \langle x_1,\dots, x_d\rangle=\langle x_1\rangle \cup  \langle x_2\rangle \cup \dots \cup \langle x_d\rangle$.
\end{lemma}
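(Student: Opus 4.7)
The inclusion $\bigcup_{k=1}^d\langle x_k\rangle\subseteq\overline{\pi_G({\cal CM}_n^p)}\cap\langle x_1,\ldots,x_d\rangle$ is automatic, since $\overline{\pi_G({\cal CM}_n^p)}$ is a cone containing each $x_k$. The substance of the lemma lies in the reverse inclusion, which I would establish by a dimension count on the ``non-trivial'' incidence variety
\[
\Omega:=\bigl\{(x_1,\ldots,x_d,y)\in{\cal V}^{d+1} : y\in\langle x_1,\ldots,x_d\rangle\setminus \textstyle\bigcup_{k=1}^d\langle x_k\rangle\bigr\},
\]
where ${\cal V}:=\overline{\pi_G({\cal CM}_n^p)}$. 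The reverse inclusion for a generic $(x_1,\ldots,x_d)\in{\cal V}^d$ amounts to showing that the projection $\Omega\to{\cal V}^d$ fails to be dominant, which I would derive from the strict inequality $\dim\Omega<\dim{\cal V}^d=d(n-1)$.

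To bound $\dim\Omega$ I would project $\Omega$ onto the last factor via $(x_1,\ldots,x_d,y)\mapsto y$ and control the generic fibre. Since $G$ is connected, $\Psi:=\pi_G\circ f_p$ has one-dimensional fibres (translations by constant vectors), so $\dim{\cal V}=n-1$. For a generic $y\in{\cal V}$, the fibre of $\Omega\to{\cal V}$ consists of tuples $(x_1,\ldots,x_d)\in{\cal V}^d$ admitting a relation $\sum_k\mu_kx_k=y$ with the support of $\mu$ of size at least two. Parametrising such tuples by $(x_1,\ldots,x_d,\mu)\in{\cal V}^d\times\mathbb{C}^d$ of total dimension $d(n-1)+d$ and imposing the constraint $\sum_k\mu_kx_k\in{\cal V}$, the non-trivial part is cut out transversally—using that $\sum_k T_{x_k}{\cal V}$ has the expected dimension $d(n-1)$ at generic points and absorbing the $d\mu_k$-directions through the cone identity $x_k\in T_{x_k}{\cal V}$—so its dimension is at most
\[
d(n-1)+d-(|E(G)|-n+1)\ \le\ n+d-2,
\]
using the hypothesis $|E(G)|\ge d(n-1)+1$. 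Projecting this $(n+d-2)$-dimensional non-trivial locus back onto ${\cal V}$ via $y$, and noting that $\mu$ is uniquely recovered from $(x,y)$ because the $x_k$'s are generically linearly independent, shows that the generic fibre of $\Omega\to{\cal V}$ has dimension at most $d-1$, whence $\dim\Omega\le(n-1)+(d-1)=n+d-2$.

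The comparison $n+d-2<d(n-1)$ is equivalent to $(d-1)(n-2)>0$, which holds for $n\ge 3$ and $d\ge 2$; the arithmetic compatibility of the hypothesis $d(n-1)+1\le|E(G)|\le\binom{n}{2}$ forces $n\ge 5$ when $d=2$ and more generally ensures $n\ge 3$, so the strict inequality always holds in the regime of the lemma (the case $d=1$ being vacuous). Hence $\Omega\to{\cal V}^d$ fails to be dominant, and for a generic $(x_1,\ldots,x_d)\in{\cal V}^d$ the intersection ${\cal V}\cap\langle x_1,\ldots,x_d\rangle$ contains no point outside $\bigcup_k\langle x_k\rangle$, as required.

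The main technical obstacle is the transversality step: verifying that $\sum_k T_{x_k}{\cal V}$ at $d$ generic points really does attain its expected dimension $d(n-1)$, which is precisely the non-$d$-defectiveness of ${\cal V}$ (Proposition~\ref{prop:defective}), i.e.\ the generic local rigidity of $G$ in $\ell_p^d$. In the intended applications of this lemma the non-defectiveness is automatic, since the lemma is invoked in conjunction with the tangentially weakly non-defective setup of Proposition~\ref{prop:twnd_iden}; separately, when $|E(G)|$ is close to $\binom{n}{2}$ the ambient codimension of ${\cal V}$ becomes so large that $\Omega$ is typically already empty, making the dimension count easier rather than harder.
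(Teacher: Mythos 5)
Your reduction to the incidence variety $\Omega$ and the trivial inclusion $\bigcup_k\langle x_k\rangle\subseteq{\cal V}\cap\langle x_1,\dots,x_d\rangle$ are fine, but the only substantive step of your count --- the claim that inside ${\cal V}^d\times\mathbb{C}^d$ the condition $\sum_k\mu_kx_k\in{\cal V}$ cuts the non-trivial locus ``transversally'', i.e.\ in codimension $|E(G)|-n+1$ --- is a genuine gap, and the mechanism you offer cannot close it. Writing $\Phi(x,\mu)=\sum_k\mu_kx_k$, the image of $d\Phi$ at any point lies in $\sum_k T_{x_k}{\cal V}$ (the $\mu$-directions are absorbed precisely because $x_k\in T_{x_k}{\cal V}$), so it has dimension at most $d(n-1)$; adding $T_z{\cal V}$ at the point $z=\sum_k\mu_kx_k$ gives at most $(d+1)(n-1)$, which is strictly smaller than $|E(G)|$ for most graphs covered by the lemma (e.g.\ $G=K_n$ with $n$ large). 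Hence $\Phi$ is nowhere transverse to ${\cal V}$ in that range, and non-$d$-defectiveness of ${\cal V}$ does not rescue the count: imposing membership in ${\cal V}$ only acts through the image $S_d({\cal V})$, and indeed $\Phi^{-1}({\cal V})$ has components of dimension $d(n-1)+1$ (the trivial ones, with a single $\mu_k\neq 0$), far above your claimed bound $d(n-1)+d-(|E(G)|-n+1)$. Excluding the trivial components and asserting that what remains drops by the full ambient codimension is essentially a restatement of (a strengthening of) the lemma itself, so the argument is circular at its key point. Two further problems: bounding $\dim\Omega$ by $\dim{\cal V}$ plus the fibre dimension over a \emph{generic} $y\in{\cal V}$ says nothing about components of $\Omega$ whose $y$-image lies in a proper subvariety of ${\cal V}$, and those are exactly the dangerous components for dominance over ${\cal V}^d$; and your route needs non-$d$-defectiveness (generic local rigidity in $\ell_p^d$), which is not a hypothesis of the lemma --- it is available where the lemma is applied (via Lemma~\ref{lem:number_of_edges}), but then you would only be proving a weaker statement than the one asserted.

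For comparison, the paper does not attempt a self-contained incidence count: it deduces the statement directly from the generalised trisecant lemma of Chiantini and Ciliberto~\cite{chiantini2002weakly}, following verbatim the proof of Proposition~7.13 in~\cite{cruickshank2023identifiability}, with the hypothesis $|E(G)|\geq d(n-1)+1$ playing the role of the numerical condition on the ambient dimension. That trisecant-type statement is a genuine theorem of projective geometry (its proof goes through the theory of weakly defective varieties, not a formal dimension count), which is consistent with precisely where your argument gets stuck; if you want a proof not quoting it, you must supply an argument of that depth for the non-trivial locus, not a transversality assertion.
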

\begin{proof}
This is a direct consequence of a generalised trisecant lemma~\cite{chiantini2002weakly}, and the proof is identical to that of Proposition 7.13 in \cite{cruickshank2023identifiability}.
\end{proof}

\begin{lemma}\label{lem:number_of_edges}
Let $p$ be an integer with $p>2$,
and $(G,\bp)$ be a $d$-dimensional generic framework with a connected graph $G$ with $|E(G)| \ge 2$.
If $\overline{\pi_G({\cal CM}_n^p)}$ is not $d$-tangentially weakly defective, then 
\begin{itemize}
\item[(i)] $(G,\bp)$ is not independent (i.e., $J(\pi_G\circ f_p^{\times d})(\bp)$ is row dependent),
\item[(ii)] $(G,\bp)$ is locally rigid in $\ell_p^d$, and
\item[(iii)] $|E(G)|\geq d(n-1)+1$.
\end{itemize}
\end{lemma}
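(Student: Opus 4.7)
Let ${\cal V} = \overline{\pi_G({\cal CM}_n^p)}$, let $\bp_i$ denote the $i$-th coordinate vector of $\bp$, set $x_i := \pi_G \circ f_p(\bp_i) \in {\cal V}$, and write $U := \langle T_{x_1}{\cal V}, \ldots, T_{x_d}{\cal V}\rangle$. Since $G$ is connected, ${\cal V}$ is irreducible of dimension $n-1$; the hypothesis $|E(G)|\ge 2$ together with the simple-graph convention forces $n\ge 3$, so $\dim {\cal V}\ge 2$. Because $\bp$ is generic, the points $x_1,\ldots,x_d$ are generic in ${\cal V}$, so by Terracini's lemma $U = T_{\sum_i x_i} S_d({\cal V})$ and $\dim U = \dim S_d({\cal V})$. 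Since $S_d({\cal V}) = \overline{\operatorname{image}(\pi_G\circ f_p^{\times d})}$, one also has $\dim U = \rank J(\pi_G\circ f_p^{\times d})(\bp)$.

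My plan is to prove (i) first, then (iii), and finally (ii). For (i), the hypothesis that ${\cal V}$ is not $d$-tangentially weakly defective says that the irreducible component of $C_{x_1,\ldots,x_d}{\cal V}$ containing $x_i$ is one-dimensional for each $i$; since ${\cal V}$ is irreducible of dimension at least $2$, this rules out $C_{x_1,\ldots,x_d}{\cal V}={\cal V}$, so some $y\in {\cal V}$ satisfies $T_y{\cal V}\not\subseteq U$, whence $U\subsetneq \mathbb{C}^{|E(G)|}$. Therefore $\rank J(\pi_G\circ f_p^{\times d})(\bp) < |E(G)|$; the Jacobian has a non-trivial left null vector, which is precisely a non-trivial self-stress of $(G,\bp)$, so $(G,\bp)$ is not independent.

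For (iii), I will invoke the classical implication in the secant variety literature (see, e.g., Chiantini--Ottaviani~\cite{chiantini2012generic}) that $d$-defectiveness entails $d$-tangential weak defectiveness: if $\dim S_d({\cal V}) < d(n-1)$, then the generic fibre of the addition map $\mu_d\colon {\cal V}^d \to S_d({\cal V})$ is positive-dimensional, and a standard Terracini-at-a-generic-fibre-point argument produces a positive-dimensional component of $C_{x_1,\ldots,x_d}{\cal V}$ through each $x_i$. Under our hypothesis, this contrapositive gives $\dim S_d({\cal V}) = \min(d(n-1), |E(G)|)$. Coupling with $\dim S_d({\cal V}) < |E(G)|$ from step (i) forces $d(n-1) < |E(G)|$, i.e.\ $|E(G)| \ge d(n-1)+1$, and simultaneously $\dim S_d({\cal V}) = d(n-1)$.

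Assertion (ii) is then immediate: $\rank J(\pi_G\circ f_p^{\times d})(\bp) = d(n-1) = dn - d$, so $(G,\bp)$ is infinitesimally rigid, and by the genericity of $\bp$ and Proposition~\ref{prop:inf}, $(G,\bp)$ is locally rigid in $\ell_p^d$. The most delicate step is the implication ``$d$-defective $\Rightarrow$ $d$-twd'' invoked in (iii); while standard in the theory of secant varieties, care is required to transcribe the affine cone version to the present $p$-Cayley--Menger setting. Everything else reduces to Terracini's lemma and the rank-dimension correspondence at generic points.
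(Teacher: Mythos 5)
Your proof of (i) is essentially the paper's, and your derivation of (ii) from the intermediate claim $\dim S_d(\mathcal{V})=d(n-1)$ is fine. The problem is the pivotal step, the implication ``$d$-defective $\Rightarrow$ $d$-tangentially weakly defective,'' which carries all the content of (ii) and (iii) in your organisation and which you neither prove nor can simply cite. Your one-sentence sketch concludes that a positive-dimensional generic fibre of the addition map ``produces a positive-dimensional component of $C_{x_1,\dots,x_d}\mathcal{V}$ through each $x_i$.'' That conclusion is vacuous: since $\mathcal{V}$ is a cone, the contact locus always contains the ray $\langle x_i\rangle$ and hence always has dimension at least one (the paper notes this immediately before defining tangential weak defectiveness). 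To contradict non-twd you must exhibit an irreducible component through some $x_i$ of dimension at least \emph{two}. This can be done, but it needs two ingredients missing from your write-up: (a) the fibre component through $(x_1,\dots,x_d)$ must contain points $(y_1,\dots,y_d)$ with some $y_j\notin\langle x_j\rangle$, which requires $x_1,\dots,x_d$ to be linearly independent and hence $|E(G)|>d$ --- a case you never rule out (the paper disposes of it using the elementary fact that $J(\pi_G\circ f_p^{\times d})(\bp)$ is row independent whenever $|E(G)|\le d$, combined with (i)); and (b) the contact locus is itself a cone, so the cone over the resulting curve through $x_j$ is irreducible of dimension at least two, which is what actually violates non-twd. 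I would also not treat Chiantini--Ottaviani as a quotable source for ``defective $\Rightarrow$ twd'' in the affine-cone formulation and in the unbalanced regime $|E(G)|<d(n-1)$ that your argument exploits; the burden of proof is on you here.

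For comparison, the paper avoids this detour: it proves (ii) directly by taking a continuous flex $\bp^t$ of a putative non-rigid generic framework, observing that the coordinate images $\pi_G\circ f_p(\bp^t_k)$ stay in the contact locus, and showing some coordinate traces out a curve through $x_k$ not contained in $\langle x_k\rangle$; (iii) then follows from (i) and (ii) by counting. Your fibre-of-the-addition-map argument is the algebraic shadow of that flex argument, so once you supply (a) and (b) above (and the $|E(G)|\le d$ case) your route closes the same way the paper's does; as submitted, however, the key step is a genuine gap.
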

\begin{proof}
Let ${\cal V}=\overline{\pi_G({\cal CM}_n^p)}$.
To see (i), recall that  the column space of $J(\pi_G\circ f_p^{\times d})(\bp)$ is equal to the 
the tangent space of the $d$-secant of ${\cal V}$ at $\pi_G\circ f_p^{\times d}(\bp)$.
Hence, if $J(\pi_G\circ f_p^{\times d})(\bp)$ is row independent, the $d$-tangent contact locus would be the whole variety, whose dimension is $|E(G)|\geq 2$. This contradicts the assumption that ${\cal V}$ is not $d$-tangentially weakly defective.

For the proof of (ii),
we use an elementary fact that $J(\pi_G\circ f_p^{\times d})(\bp)$ is  row independent if $|E(G)|\leq d$, see, e.g.~\cite{dewarkitson2022}.
Hence, by (i), we have $|E(G)|> d$.
Suppose to the contrary that $(G,\bp)$ is not locally rigid. There is a continuous motion $\bp^t$ parameterised by $t\in [0,1]$.
Then by definition $\pi_G\circ f_p^{\times d}(\bp)=\pi_G\circ f_{p}^{\times d}(\bp^t)$, $\bp^0=\bp$, and $\bp^t$ is not congruent to $\bp$ for $t\in (0,1]$.

Let $\bp^t_k$ be the $k$-th coordinate vector of $\bp^t$.
By $\pi_G\circ f_p^{\times d}(\bp)=\pi_G\circ f_{p}^{\times d}(\bp^t)$,
the $d$-tangent contact locus of ${\cal V}$ at $\pi_G\circ f_p(\bp_1^t),\dots,  \pi_G\circ f_p(\bp_d^t)$ is invariant over $t$.
This in particular implies 
\begin{equation}\label{eq:number_of_edges}
\pi_G\circ f_p(\bp_k^t) \in C_{\pi_G\circ f_p(\bp_1),\dots, \pi_G\circ f_p(\bp_d)} {\cal V}
\end{equation}
for each $k=1,\dots, d$.
We now show that there is some $k$ such that 
$\pi_G\circ f_p(\bp_k^t)$ is not a scalar multiple of $\pi_G\circ f_p(\bp_k)$ for all small $t$.
To see this, first observe that  
$\pi_G\circ f_p(\bp_1),\dots, \pi_G\circ f_p(\bp_d)$ are linearly independent as long as $|E(G)|\geq d$.
Hence, if $\pi_G\circ f_p(\bp_k^t)$ is a scalar multiple of $\pi_G\circ f_p(\bp_k)$ for all $k$,
then $\pi_G\circ f_p^{\times d}(\bp)=\pi_G\circ f_p^{\times d}(\bp^t)$ would imply 
$\pi_G\circ f_p(\bp_k^t)=\pi_G\circ f_p(\bp_k)$ for all $k$.
On the other hand, the connectivity of $G$ implies that the one-dimensional framework $(G,\bp^t_k)$ is locally rigid,
and hence $\pi_G\circ f_p(\bp_k^t)=\pi_G\circ f_p(\bp_k)$ further implies that $\bp^t_k$ is a translation image of $\bp_k$
for all $k$ and $\bp^t$ is congruent to $\bp$, a contradiction.

Thus, there is some $k$ such that 
$\pi_G\circ f_p(\bp_k^t)$ is not a scalar multiple of $\pi_G\circ f_p(\bp_k)$ for all small $t$.
By (\ref{eq:number_of_edges}), 
$\pi_G\circ f_p(\bp_k^t)$ forms a curve in $C_{\pi_G\circ f_p(\bp_1),\dots, \pi_G\circ f_p(\bp_d)} {\cal V}$, contradicting the $d$-tangential weak non-defectivity of  ${\cal V}$.
This completes the proof of (ii).

Finally, (iii) follows from (i) and (ii).
Indeed, (i) and (ii) imply that 
$\rank J(\pi_G\circ f_p^{\times d})(\bp)\geq d(n-1)$
and $J(\pi_G\circ f_p^{\times d})(\bp)$ is not row independent.
So, $|E(G)|\geq 1+\rank J(\pi_G\circ f_p^{\times d})(\bp)
\geq d(n-1)+1$.
\end{proof}

\begin{proof}[Proof of Proposition~\ref{prop:twnd_iden}]
For simplicity, denote ${\cal V}=\pi_G({\cal CM}_n^p)$.
For $k=1,\dots, d$, let $\bp_k$ be the $k$-th coordinate vector of $\bp$,
and let $x_k=\pi_G \circ f_p(\bp_k)$.
Then $z:=\pi_G\circ f_p^{\times d}(\bp)=\sum_{k=1}^d x_k$.
To show the identifiability of ${\cal V}$,
we assume $z$ has another representation 
$z=\sum_{k=1}^d y_k$ for some $y_1,\dots, y_d\in {\cal V}$ with $y_1\notin \langle x_1\rangle \cup \dots \cup \langle x_d\rangle$.

By Lemma~\ref{lem:number_of_edges},
$|E(G)|\geq d(n-1)+1$, and hence 
Lemma~\ref{lem:trisecant} implies that 
${\cal V}\cap \langle x_1,\dots, x_d\rangle=\langle x_1\rangle\cup \dots \cup \langle x_d\rangle$.
In particular, 
$y_1\notin \langle x_1,\dots, x_d\rangle$.

Let $\lambda(t):[0,1]\rightarrow \mathbb{R}$ be a continuously increasing function with $\lambda(0)=1$,
and define $z^t=\lambda(t)y_1+\sum_{i=2}^d y_i$.
Since ${\cal V}$ is a cone, $z^t$ is still in $S_d({\cal V})$, and $T_z S_d({\cal V})=T_{z^t} S_d({\cal V})$ for all $t$.
Also, since $y_1 \notin \langle x_1,\dots, x_d\rangle$,
$z^t\notin \langle x_1,\dots, x_d\rangle$ for all $t>0$.

Since $f_p^{\times d}$ is a submersion in a neighbourhood of $\bp$, there is a continuous path $\bp^t$ starting from $\bp$ such that $f_p^{\times d}(\bp^t)=z^t$ for all sufficient small $t$.
Let $\bp^t_k$ be the $k$-th coordinate vector of $\bp^t$,
and let $x_k^t=f_p(\bp_k^t)$.
By $T_z S_d({\cal V})=T_{z^t} S_d({\cal V})$,
$T_{x_k^t} {\cal V}\subset T_z S_d({\cal V})$,
and hence $x_k^t\in C_{x_1,\dots, x_d}{\cal V}$.
Moreover, by $z^t\notin \langle x_1,\dots, x_d\rangle$ for $t>0$, there must be some $k$ such that 
$x_k^t$ is not a scalar multiple of $x_k$ for all small $t$.
This $x^t_k$ forms a  smooth path in $C_{x_1,\dots, x_d}{\cal V}$ starting from $x_k$, contradicting the assumption that ${\cal V}$ is not $d$-tangentially weakly defective. This completes the proof.
\end{proof}

\section{Sufficient condition in terms of local rigidity}\label{sec:suff}
In this section, we give a proof of Theorem~\ref{thm:suff}, a sufficient condition for global rigidity in $\ell_p^d$.
Our main tool is the following sufficient condition for $d$-identifiablility 
by Massarenti and Mella~\cite{massarenti2023bronowski}.
\begin{theorem}[\cite{massarenti2023bronowski}]\label{thm:Massarenti and Mella}
    Let $\mathcal{V}$ be the affine cone of a non-degenerate projective variety in $\mathbb{C}^m$. Suppose $(d+1)\dim \mathcal{V} \le m$ and $\mathcal{V}$ is neither $(d+1)$-defective nor $1$-tangentially weakly defective.
    Then $\mathcal{V}$ is $d$-identifiable.
\end{theorem}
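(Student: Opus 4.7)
The plan is to argue by contradiction using a Bronowski-style tangential projection, in which the two hypotheses play complementary roles: non-$(d+1)$-defectiveness supplies a Terracini dimension count on $S_{d+1}(\mathcal V)$, while non-$1$-tangentially weak defectiveness ensures that projection from the tangent space at a single generic point of $\mathcal V$ is generically finite on $\mathcal V$. Together they should enable an inductive descent from the $d$-identifiability of $\mathcal V$ in $\mathbb C^m$ to a lower-dimensional identifiability problem in a smaller ambient space.

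I would begin by assuming $\mathcal V$ is not $d$-identifiable, so a generic $z \in S_d(\mathcal V)$ admits two essentially distinct decompositions $z = x_1 + \dots + x_d = y_1 + \dots + y_d$, with some $y_j$ not proportional to any $x_i$. Pick a generic $v \in \mathcal V$ and pass to $w = z + v \in S_{d+1}(\mathcal V)$. Because $(d+1)\dim \mathcal V \le m$ and $\mathcal V$ is not $(d+1)$-defective, we have $\dim S_{d+1}(\mathcal V) = (d+1)\dim \mathcal V$, and Terracini's lemma yields
\[
T_w S_{d+1}(\mathcal V) \;=\; T_v \mathcal V + \sum_{i=1}^d T_{x_i}\mathcal V \;=\; T_v \mathcal V + \sum_{i=1}^d T_{y_i}\mathcal V,
\]
so both decompositions are governed by the same tangential data at $w$. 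Next, I would apply the tangential projection $\pi_v$ from $T_v \mathcal V$; the restriction $\pi_v|_{\mathcal V}$ is generically finite precisely because $\mathcal V$ is not $1$-tangentially weakly defective, as otherwise a positive-dimensional subfamily of $\mathcal V$ would be contracted by $\pi_v$. The two decompositions of $z$ then descend to two decompositions of $\pi_v(z)$ as a sum of $d$ points in the projected variety $\overline{\pi_v(\mathcal V)}$, which lives in an ambient space of strictly smaller dimension.

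The heart of the argument is to close the induction: $\overline{\pi_v(\mathcal V)}$ must retain enough structure in its new ambient space that one can either conclude directly (for instance when the ambient dimension becomes too small to admit a second decomposition, or in a suitable base case) or iterate the projection. The main obstacle, and the technical core of the Massarenti--Mella proof, is exactly the preservation of the hypotheses under $\pi_v$: one must verify that for a sufficiently generic $v$, the image $\overline{\pi_v(\mathcal V)}$ remains non-$d$-defective and non-$1$-tangentially weakly defective in its new ambient space, so the descent can be iterated. This requires a careful dimension count on incidence varieties that couple tangent bundles with the generic-projection picture; once established, the ambient dimension strictly decreases at each step, eventually forcing the two decompositions to coincide and producing the desired contradiction.
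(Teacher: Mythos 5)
The paper does not prove this statement at all: it is imported verbatim from Massarenti--Mella \cite{massarenti2022bronowski} and used as a black box (its only role here is to feed Corollary 4.2), so there is no in-paper proof to compare your argument against. Judged on its own terms, your proposal correctly identifies the Bronowski-style strategy of projecting from a tangent space and is a fair description of the shape of the Massarenti--Mella argument, but it is a plan rather than a proof, and you say so yourself: the ``preservation of the hypotheses under $\pi_v$'' step, which you call the technical core, is exactly the part that is left completely open. Without it the induction never closes, so as written there is a genuine gap.

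Beyond that admitted gap, several intermediate assertions would need real justification before the outline could be called a proof. The identity $T_w S_{d+1}(\mathcal V) = T_v\mathcal V + \sum_i T_{y_i}\mathcal V$ is not an application of Terracini's lemma in its usual form, because the $y_i$ are not independently generic points of $\mathcal V$ --- they are constrained by the equation $\sum y_i = z$ --- so only the inclusion $\supseteq$ (at smooth points) is automatic, and upgrading it to equality requires an argument. The equivalence you invoke between non-$1$-tangential weak defectiveness and generic finiteness of the tangential projection $\pi_v|_{\mathcal V}$ is a nontrivial result in the Chiantini--Ciliberto circle of ideas, not a definition-chase. Finally, even granting generic finiteness, you must check that the two decompositions of $z$ descend to two \emph{still distinct} decompositions of $\pi_v(z)$ in $\overline{\pi_v(\mathcal V)}$ (distinct points of $\mathcal V$ can become proportional after a linear projection), and that $\pi_v(z)$ is still a sufficiently general point of $S_d(\overline{\pi_v(\mathcal V)})$ for the inductive hypothesis to apply. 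Each of these is plausible, but each is precisely the kind of step where such arguments fail for special varieties, which is why the original proof requires the careful incidence-variety dimension counts you allude to but do not carry out.
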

In terms of the $p$-Cayley-Menger varieties, we have the following.
\begin{corollary}\label{cor:MM}
Let $p$ be an integer with $p >2$, $d$ be a positive integer, and $G$ be a graph.
If $G$ is locally rigid in $\ell_p^{d+1}$ and $\overline{\pi_G({\cal CM}_n^p)}$ is not $1$-tangentially weakly defective,
then $\overline{\pi_G({\cal CM}_n^p)}$ is $d$-identifiable.
\end{corollary}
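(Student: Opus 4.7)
The plan is to apply Theorem~\ref{thm:Massarenti and Mella} directly to $\mathcal{V}:=\overline{\pi_G(\mathcal{CM}_n^p)}\subseteq\mathbb{C}^{|E(G)|}$. Its four hypotheses are that $\mathcal{V}$ is the affine cone of a non-degenerate projective variety, that $(d+1)\dim\mathcal{V}\leq|E(G)|$, that $\mathcal{V}$ is not $(d+1)$-defective, and that $\mathcal{V}$ is not $1$-tangentially weakly defective. The last is given by hypothesis, and the remaining three should follow from $G$ being locally rigid in $\ell_p^{d+1}$.

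First, $\mathcal{V}$ is a cone because $f_p(\lambda\bx)=\lambda^p f_p(\bx)$, so its image is closed under complex scalar multiplication. Next, the assumption that $G$ is locally rigid in $\ell_p^{d+1}$ forces $G$ to be connected (otherwise no generic framework is locally rigid) and, via Proposition~\ref{prop:inf} applied in dimension $d+1$, implies $|E(G)|\geq\operatorname{rank} J(\pi_G\circ f_p^{\times (d+1)})(\bp)=(d+1)(n-1)$ for generic $\bp$. Combined with $\dim\mathcal{V}=n-1$ (as noted in the proof of Proposition~\ref{prop:defective}, using connectedness of $G$), this yields the dimension bound $(d+1)\dim\mathcal{V}\leq|E(G)|$. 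The non-$(d+1)$-defectiveness of $\mathcal{V}$ then follows from Proposition~\ref{prop:defective} applied with $d$ replaced by $d+1$, since both of its hypotheses ($G$ connected and $|E(G)|\geq(d+1)(n-1)$) have just been verified.

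The remaining step is non-degeneracy. I would suppose there exists a nonzero $c=(c_{ij})\in\mathbb{C}^{|E(G)|}$ with
\[
\sum_{ij\in E(G)} c_{ij}(x_i-x_j)^p\equiv 0\quad\text{in }\mathbb{C}[x_1,\dots,x_n],
\]
and derive a contradiction. For any fixed edge $ij\in E(G)$ and any integer $k$ with $0<k<p$ (such $k$ exists since $p\geq 2$), the monomial $x_i^k x_j^{p-k}$ appears in the expansion of $(x_l-x_m)^p$ only when $\{l,m\}=\{i,j\}$, with coefficient $\binom{p}{k}(-1)^{p-k}\neq 0$. Its total coefficient in the sum is therefore $c_{ij}\binom{p}{k}(-1)^{p-k}$, which must vanish, forcing $c_{ij}=0$ for every edge, a contradiction. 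Hence $\mathcal{V}$ is not contained in any proper linear subspace of $\mathbb{C}^{|E(G)|}$.

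I do not expect any serious obstacle: the corollary is essentially a specialisation of Theorem~\ref{thm:Massarenti and Mella} to the $p$-Cayley-Menger variety, with the combinatorial local rigidity hypothesis translating into the dimension and defectivity conditions via Propositions~\ref{prop:inf} and~\ref{prop:defective}, while non-degeneracy reduces to a clean monomial-separation argument in the binomial expansion of $(x_i-x_j)^p$. The only mildly delicate point is ensuring the ambient dimension bookkeeping is correct when invoking Theorem~\ref{thm:Massarenti and Mella}, but this is taken care of by the rank estimate above.
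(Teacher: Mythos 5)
Your proposal is correct and follows essentially the same route as the paper's proof: both apply Theorem~\ref{thm:Massarenti and Mella} to $\overline{\pi_G({\cal CM}_n^p)}$, deriving connectedness, the edge-count bound $m\geq(d+1)(n-1)$, $\dim\mathcal{V}=n-1$, and non-$(d+1)$-defectiveness from local rigidity in $\ell_p^{d+1}$ via Propositions~\ref{prop:inf} and~\ref{prop:defective}. The only difference is that you explicitly verify the cone and non-degeneracy hypotheses (the monomial-separation argument for $x_i^k x_j^{p-k}$ is valid), which the paper leaves implicit.
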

\begin{proof}
Let ${\cal V}=\overline{\pi_G({\cal CM}_n^p)}\subseteq \mathbb{C}^m$.
Since $G$ is locally rigid in $\ell_p^{d+1}$, 
${\cal V}$ is not $(d+1)$-defective by Proposition~\ref{prop:defective}.
The local rigidity in $\ell_p^{d+1}$ also implies that $G$ is connected and $m\geq (d+1)n-(d+1)$.
The connectivity of $G$ implies the local rigidity of $G$ in $\ell_p^1$, and hence $\dim {\cal V}=n-1$. So $m\geq (d+1)\dim {\cal V}$ follows.
Also ${\cal V}$ is not 1-tangentially weakly defective by the assumption in the statement. 
Thus, 
${\cal V}$ is $d$-identifiable by Theorem~\ref{thm:Massarenti and Mella}.
\end{proof}

In order to apply Corollary~\ref{cor:MM} to the global rigidity problem in $\ell_p^d$,
what is missing is the analysis of the 1-tangential weak defectivity of $\overline{\pi_G({\cal CM}_n^p)}$.
In view of Lemma~\ref{lem:COV_test}, this is a question about one-dimensional frameworks. Specifically,  Lemma~\ref{lem:COV_test} states that
$\overline{\pi_G({\cal CM}_n^p)}$ is not 1-tangentially weakly defective if 
a generic 1-dimensional framework $(G,\bx)$ has a coordinated self-stress $\omega^1$ such that $\rank L_{G,\omega^1}=n-2$.
We now check this by an inductive argument as follows.

We need the following elementary tool from linear algebra.
Suppose $M = \begin{pmatrix}
    A & B \\
    C & D
\end{pmatrix}$ is a block matrix and $A$ is invertible.
Then, the \textit{Schur complement} of $A$ in $M$ is the matrix $M/A := D - C A^{-1} B$. We have
\begin{equation}\label{eq:Schur complement}
    \rank M = \rank A + \rank (M/A).
\end{equation}

\begin{lemma}\label{lem: edge subdivision}
    Let $(G, \bx)$  be a generic $d$-dimensional framework.
    Suppose $G$ is 2-connected and a graph $G'$ is obtained from $G$ by subdividing an edge $e$.
    Suppose also that $(G, \bx)$ has a generic self-stress $\omega$ such that $\omega(e)\neq 0$ and $\rank L_{G, \omega^1} = |V(G)| - 2$.
    Then, there is a generic one-dimensional framework $(G', \bx')$ which has a self-stress $\omega'$ such that $\rank L_{G, {\omega'}^1} = |V(G')| - 2$.
\end{lemma}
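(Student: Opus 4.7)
The plan is to extend $(G,\bx)$ to a framework $(G',\bx')$ by placing the subdivision vertex $w$ at a fresh coordinate $t$ chosen algebraically independent over $\mathbb{Q}(\bx)$, so that $(G',\bx')$ is automatically generic, and to define $\omega'$ on $G'$ by keeping $\omega'(ij)=\omega(ij)$ for $ij \in E(G)\setminus\{e\}$ while picking the two new edge weights so that equilibrium at $u$ and at $v$ in $(G',\bx')$ reproduces equilibrium at $u$ and at $v$ in $(G,\bx)$. Writing $e = uv$, the forced choice is
\begin{equation*}
\omega'(uw) = \omega(e)\frac{(\bx(v)-\bx(u))^{p-1}}{(t-\bx(u))^{p-1}}, \qquad \omega'(vw) = \omega(e)\frac{(\bx(v)-\bx(u))^{p-1}}{(\bx(v)-t)^{p-1}}.
\end{equation*}
Because $p-1$ is odd, these signs align exactly so that the contribution of $\omega'(uw)$ at $u$ equals $\omega(e)(\bx(v)-\bx(u))^{p-1}$ and the contribution of $\omega'(vw)$ at $v$ equals $\omega(e)(\bx(u)-\bx(v))^{p-1}$, matching the missing terms from the deleted edge $e$; equilibrium at $w$ is then a one-line check that the two contributions are $\pm\omega(e)(\bx(v)-\bx(u))^{p-1}$ and cancel.

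The heart of the argument is the rank computation. Using that $p-2$ is even, the first-coordinated weights on the new edges take the clean form
\begin{equation*}
a := {\omega'}^1(uw) = \frac{\alpha}{t-\bx(u)}, \qquad b := {\omega'}^1(vw) = \frac{\alpha}{\bx(v)-t}, \qquad \alpha := \omega(e)(\bx(v)-\bx(u))^{p-1},
\end{equation*}
while ${\omega'}^1(ij)=\omega^1(ij)$ on every surviving edge of $G$. The row and column of $L_{G',{\omega'}^1}$ indexed by $w$ have diagonal $a+b$, off-diagonal entries $-a,-b$ at $u,v$, and zeros elsewhere; by $\omega(e)\ne 0$ and genericity of $t$ one has $a,b,a+b\ne 0$, so one may Schur-complement out this $1\times 1$ pivot. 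The key ``series combination'' identity
\begin{equation*}
\frac{ab}{a+b} = \frac{\alpha}{\bx(v)-\bx(u)} = \omega(e)(\bx(v)-\bx(u))^{p-2} = \omega^1(e)
\end{equation*}
then makes the Schur complement collapse: the corrections $a - a^2/(a+b) = \omega^1(e)$ at $(u,u)$ and $b - b^2/(a+b) = \omega^1(e)$ at $(v,v)$ exactly cancel the ``$+a-\omega^1(e)$'' and ``$+b-\omega^1(e)$'' that appeared when the edge $e$ disappeared, while the $(u,v)$ entry acquires the term $-ab/(a+b) = -\omega^1(e)$ that the edge $e$ used to contribute. One concludes that the Schur complement equals $L_{G,\omega^1}$, and~\eqref{eq:Schur complement} yields
\begin{equation*}
\rank L_{G',{\omega'}^1} = 1 + \rank L_{G,\omega^1} = 1 + (|V(G)|-2) = |V(G')|-2.
\end{equation*}

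The only real obstacle is the bookkeeping of parities: $p-1$ is odd but $p-2$ is even, so care is needed when flipping $\bx(u)-\bx(v)$ to $\bx(v)-\bx(u)$ or $t-\bx(u)$ to $\bx(u)-t$ in the various exponents, both when verifying that $\omega'$ is a self-stress and when identifying the effective series weight with $\omega^1(e)$. Once those sign ledgers are settled the Schur-complement reduction is automatic, and the hypotheses $\omega(e)\ne 0$ together with the generic choice of $t$ guarantee that all denominators in the construction are nonzero.
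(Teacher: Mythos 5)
Your proof is correct. The equilibrium checks at $u$, $v$, $w$ work out (the odd exponent $p-1$ makes the contributions of $\omega'(uw)$ at $u$ and of $\omega'(vw)$ at $v$ reproduce exactly the missing terms of the deleted edge), the series identity $\frac{ab}{a+b}=\omega^1(e)$ is right, and with it the Schur complement at the pivot $a+b\neq 0$ is exactly $L_{G,\omega^1}$, giving $\rank L_{G',{\omega'}^1}=1+\rank L_{G,\omega^1}=|V(G')|-2$. The route differs from the paper's in one genuine respect: the paper does not work at a generic placement at all. It places the subdivision vertex at the midpoint $\bx'(v_0)=(\bx(v_1)+\bx(v_2))/2$, which makes the new weights constant ($2^{p-1}\omega(v_1v_2)$ on both new edges) and the Schur complement collapse to $L_{G,\omega^1}$ with no series identity needed; it then must additionally verify $\rank J(\pi_{G'}\circ f_p^{\times 1})(\bx')=|V(G')|-1$ and invoke Proposition~\ref{prop: avoiding generic construction} to transfer the conclusion to generic frameworks. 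You instead put the new vertex at a coordinate $t$ algebraically independent over $\mathbb{Q}(\bx)$, so the framework is generic outright and the transfer step (and the rigidity-matrix rank computation) becomes unnecessary, at the mild cost of $t$-dependent weights and the extra cancellation bookkeeping. Both arguments share the same core mechanism (Schur complement at the new vertex reducing to $L_{G,\omega^1}$); the paper's is computationally lighter, yours delivers the lemma's statement verbatim without appealing to the genericity-transfer proposition. One cosmetic remark: the conclusion you prove is $\rank L_{G',{\omega'}^1}=|V(G')|-2$, which is what the lemma intends (the subscript $G$ in the statement is a typo).
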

\begin{proof}
    Suppose that $G'$ is constructed from $G$ by deleting $e=v_1 v_2$, adding a vertex $v_0$ and two new edges $v_0v_1$ and $v_0v_2$.
    By Proposition \ref{prop: avoiding generic construction}, it suffices to show that there exist a framework $(G', \bx')$ with $\rank  J(\pi_{G'}\circ f_p^{\times 1})(\bx') = |V(G')| - 1$ and a self-stress $\omega'$ such that $\rank L_{G', {\omega'}^{1}} = |V(G')| - 2$.
    Choose a one-dimensional point configuration $\bx':V(G')\rightarrow \mathbb{R}$ such that $\bx'(v_0)=(\bx(v_1)+\bx(v_2))/2$ and 
    $\bx'(u)=\bx(u)$ for $u\in V(G)$.
    We first check $\rank J(\pi_{G'}\circ f_p^{\times 1})(\bx')=|V(G')|-1$.
    
    To see this, consider the rows of $J(\pi_{G'}\circ f_p^{\times 1})(\bx')$ corresponding to the two new edges $v_0v_1, v_0v_2$:
    \begin{equation*}
        \begin{blockarray}{ccccc}
            \,& v_0 & v_1 & v_2 & V(G)\setminus\{v_1,v_2\}\\
            \begin{block}{c(cccc)}
                v_0v_1 & -2^{1-p}d & 2^{1-p}d & 0 & 0\\
                v_0v_2 & 2^{1-p}d & 0 & -2^{1-p}d & 0\\
            \end{block}
        \end{blockarray},
    \end{equation*}
    where $d := (\bx(v_1) - \bx(v_2))^{p-1}$.
    By $\bx'(v_0)=(\bx'(v_1)+\bx'(v_2))/2$,
    we can use elementary row operations to reduce $J(\pi_{G'}\circ f_p^{\times 1})(\bx')$ to the form
    \begin{equation*}
        \left(
        \begin{array}{c | c}
        d & {\begin{array}{cc}-d \ & \  0 \end{array}}\\
        \hline
        O & J(\pi_{G}\circ f_p^{\times 1})(\bx)
        \end{array}
        \right).
    \end{equation*}
    Thus, $\rank J(\pi_{G'}\circ f_p^{\times 1})(\bx') = \rank J(\pi_{G}\circ f_p^{\times 1})(\bx) + 1 = |V(G')| - 1$.

    Next, using the self-stress $\omega$ of $(G,\bx)$,  we define a self-stress $\omega'$ of $(G',\bx')$ as follows:
    \begin{equation*}
        \omega'(e) = \begin{cases}
            2^{p-1}\omega(v_1v_2) & (e \in \{v_0v_1, v_0v_2\})\\
            \omega(e) & (e \in E(G)\setminus \{v_1v_2\}),
        \end{cases}
    \end{equation*}
    for $e\in E(G)$.
    It is easy to see that $\omega' \in \ker J(\pi_{G'}\circ f_p^{\times 1})(\bx')^{\top}$, and hence $\omega'$ is a self-stress of $(G', \bx')$.
    Then, the Laplacian matrix $L_{G', {\omega'}^1}$ is given by
    \begin{equation*}
        L_{G', {\omega'}^1} = \left(\begin{array}{c|c}
            0 & 0 \\
            \hline
            0 & L_{G, \omega^1}\\
        \end{array}\right)
        + 
        \left(\begin{array}{c | c}
        \begin{array}{ccc}
        4\omega(v_1v_2) & -2\omega(v_1v_2) & -2\omega(v_1v_2)\\
        -2\omega(v_1v_2) & \omega(v_1v_2) & \omega(v_1v_2)\\
        -2\omega(v_1v_2) & \omega(v_1v_2) & \omega(v_1v_2)\\
        \end{array}
        & O \\
       \hline
       O & O
        \end{array}
        \right).
    \end{equation*}
    Since  $\omega(v_1v_2) \neq 0$, we can take the Schur complement of $L_{G', {\omega'}^1}$ at the left-top corner.
    The resulting matrix $L_{G', {\omega'}^1}/v_0$ is exactly $L_{G, \omega^1}$.
    Thus, $\rank L_{G', {\omega'}^1} = 1 + \rank L_{G', {\omega'}^1}/v_0 = 1 + \rank L_{G, \omega^1} = |V(G')| - 2$.
\end{proof}

In a graph $G$, an inclusion-wise maximal 2-connected subgraph (having at least three vertices) is called a {\em 2-connected component}
and an inclusion-wise maximal 2-edge-connected subgraph (which may consist of a single vertex) is called a {\em 2-edge-connected component}. 

\begin{theorem}\label{thm:laplacian of 1-dim}
    Let $p$ be an integer with $p> 2$ and $(G, \bx)$ be a generic one-dimensional framework.
    Then $(G, \bx)$ has a $1$-st coordinated self-stress $\omega^1$ such that $\rank L_{G,\omega^1} = |V(G)|-a-b$,
    where $a$ and $b$ denote the number of 2-connected components and that of 2-edge-connected components in $G$, respectively.
\end{theorem}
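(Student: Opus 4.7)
The plan is to prove the rank formula by induction on $|V(G)|+|E(G)|$, sequentially reducing to the $2$-connected case. First, the Laplacian is block diagonal over connected components and both $a$ and $b$ are additive, so we may assume $G$ is connected. Next, if $G$ has a bridge $e$, then any self-stress $\omega$ of $(G,\bx)$ satisfies $\omega(e)=0$ (sum the equilibria over one side of the bridge), whence $L_{G,\omega^1}=L_{G-e,\omega^1}$ is block diagonal over the components of $G-e$; since a bridge lies in no $2$-connected subgraph with at least three vertices and in no $2$-edge-connected subgraph, both $a$ and $b$ decompose additively across the bridge, and the formula for $G$ follows from the formulae for the two components. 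Thus we may assume $G$ is $2$-edge-connected, so $b(G)=1$. If $G$ still has a cut vertex $v$, write $G=H_1\cup H_2$ with $V(H_1)\cap V(H_2)=\{v\}$; each $H_i$ is again $2$-edge-connected (a bridge in $H_i$ would disconnect $G$), and inductively admits a self-stress $\omega_i$ of rank $|V(H_i)|-a(H_i)-1$ which, by the density within the stress space of both the max-rank condition and the nonzero-on-every-edge condition, can be chosen nonzero on every edge. Combining $\omega_1,\omega_2$ gives a self-stress $\omega$ of $G$ nonzero on every edge, and Lemma~\ref{lem:laplacian_property} then yields $\rank L_{G,\omega^1}=\rank L_{H_1,\omega_1^1}+\rank L_{H_2,\omega_2^1}=|V(G)|-a(G)-1=|V(G)|-a(G)-b(G)$.

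It then remains to treat the case in which $G$ is $2$-connected, for which the target rank is $n-2$. I will induct on $n$, with base $n=3$ (so $G=K_3$) given by Example~\ref{ex: base case of 1-dim}. For $n\ge 4$, invoke the ear decomposition of $2$-connected graphs: $G$ is obtained from a smaller $2$-connected $G'$ by adding an ear of length $\ell\ge 1$, which can be realised as an edge addition $G'\to G'+e$ followed by $\ell-1$ subdivisions of $e$. By the inductive hypothesis together with Proposition~\ref{prop: avoiding generic construction}, $G'$ admits a generic self-stress $\omega'$ of rank $|V(G')|-2$. For the edge-addition step, extend $\omega'$ by zero on $e$; this still has rank $|V(G')|-2$ but vanishes on $e$. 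To correct this, add a small scalar multiple of a self-stress $\omega_e$ of $G'+e$ supported on a cycle through $e$ (such a self-stress exists because $G'+e$ is $2$-connected, so $e$ lies on some cycle). By the lower semicontinuity of matrix rank, combined with the a priori upper bound $\rank L \le |V(H)|-2$ coming from the all-ones vector and the position vector both lying in the kernel of any such Laplacian, the perturbed self-stress has rank exactly $|V(G')|-2$ and is nonzero on $e$. Lemma~\ref{lem: edge subdivision} may then be applied $\ell-1$ times in succession: each application raises $|V|$ and the rank by one, and by the explicit construction in the lemma the resulting self-stress is nonzero on both newly created edges, allowing the next subdivision.

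The main obstacle is this edge-addition step inside the $2$-connected induction: Lemma~\ref{lem: edge subdivision} requires the self-stress to be nonzero on the edge being subdivided, while the natural zero-extension across an edge addition vanishes precisely on the new edge. Resolving this via a cycle-stress perturbation, together with the lower semicontinuity of rank, is the key technical device, and must be carried out so that the resulting self-stress is generic enough to enable the subsequent subdivisions (invoking Proposition~\ref{prop: avoiding generic construction} at each step to pass from the specific constructed framework back to every generic framework). A secondary subtlety is that the cut-vertex decomposition needs the self-stresses of each piece to be nonzero on every edge before Lemma~\ref{lem:laplacian_property} can be applied; handling this requires verifying that within the self-stress space of a $2$-edge-connected graph, the set of self-stresses achieving the target rank and nonzero on all edges is a nonempty open set, which follows from the $2$-connected case together with the aforementioned density arguments.
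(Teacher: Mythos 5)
Your proposal is, in substance, the paper's own argument: reduce to the $2$-connected case by splitting at bridges (where every self-stress vanishes) and at cut vertices via Lemma~\ref{lem:laplacian_property}, and then handle the $2$-connected case by building the graph up from $K_3$ (Example~\ref{ex: base case of 1-dim}) using edge subdivisions (Lemma~\ref{lem: edge subdivision}) and edge additions, transferring back to generic frameworks and generic stresses with Proposition~\ref{prop: avoiding generic construction}. Your treatment of the edge-addition step (zero-extension plus a small multiple of a cycle-supported stress, with lower semicontinuity of rank against the a priori bound $n-2$) is in fact more explicit than the paper, which disposes of this point by noting that a generic self-stress of a redundantly rigid ($2$-edge-connected) graph is nonzero on every edge.

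The one place where your write-up would fail verbatim is the combinatorial packaging of the $2$-connected induction. First, if $G$ is a cycle with $n\geq 4$ vertices, it is \emph{not} obtained from a smaller $2$-connected graph by adding an ear, so your induction step does not cover cycles; they must instead be built from $K_3$ by subdivisions alone (your Lemma~\ref{lem: edge subdivision} machinery already does this, but the case needs to be stated). Second, realising an ear of length $\ell\geq 2$ as ``add the edge $uv$, then subdivide'' produces a multigraph whenever $u$ and $v$ are already adjacent in $G'$, and the lemmas you invoke are stated for simple graphs. Both slips are repaired by citing, as the paper does, the standard fact that every $2$-connected simple graph is generated from $K_3$ by edge subdivisions and edge additions with all intermediate graphs simple and $2$-connected; with that substitution your argument is complete.
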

\begin{proof}
First suppose that $G$ is 2-connected.
Then $a=b=1$.
It is a basic fact from graph theory that 
$G$ can be constructed inductively from $K_3$ by subdividing edges and adding edges keeping 2-connectivity.
By Proposition~\ref{prop:inf}, $G$ and the intermediate graphs are redundantly locally rigid in $\ell_p^1$.
We have already seen in Example~\ref{ex: base case of 1-dim}
that $K_3$ satisfies the property of the statement. 
Also, by the redundant rigidity of $G$ and Lemma~\ref{lem: edge subdivision}, for every intermediate graph $H$ and its generic point configuration, there exists a generic self-stress $\omega_H$ such that $\rank L_{H, \omega_H^1} = |V(H)| - 2=|V(H)|-a-b$.

Suppose secondly that $G$ is not 2-connected but 2-edge-connected with at least two vertices.
Then  the set of all 2-connected components form a tree structure  (known as the block-cut-tree).
Applying Lemma~\ref{lem:laplacian_property} along this tree structure, 
we obtain $\rank L_{G,\omega^1}=|V(G)|-a-1$ for any generic framework $(G,\bx)$ and any generic stress $\omega$ of $(G,\bx)$.

Finally, we consider a general case.
Let $\omega$ be a generic self-stress of $(G,\bx)$.
For any bridge $e$ in $G$, $(G-e,\bx)$ is not rigid 
and hence $\omega(e)=0$ holds.
Therefore, if we denote by $H$ the graph obtained from $G$ by removing all bridges of $G$, then 
the restriction $\omega'$ of $\omega$ to $E(H)$ is a self-stress of $(H,\bx)$.
Since $\omega(e)=0$ for every bridge $e$, we have 
$L_{G,\omega^1}=L_{H,\omega'^1}$.
So $\rank L_{G,\omega^1}$ is equal to the sum of the ranks of the Laplacian sub-matrices over all connected components in $H$. 
Note that the set  of connected components in $H$ coincides with the set of 2-edge-connected components in $G$ (some of which may consist of  a single vertex). 
If a 2-edge-connected component $C$ has at least two vertices, then  the weighted Laplacian has rank $|V(C)|-a_C-1$, where $a_C$ denotes the number of 2-connected components in $C$.
If a 2-edge-connected component $C$ consists of a single vertex,
then the corresponding weighted Laplacian has rank zero,
which is $|V(C)|-a_C-1$ (by $a_C=0$).
Taking the sum of those values over all 2-edge-connected components, we obtain the desired rank formula.
\end{proof}


We are now ready to complete the proof of Theorem~\ref{thm:suff}.
\begin{proof}[Proof of Theorem~\ref{thm:suff}]
Let ${\cal V}=\overline{\pi_G({\cal CM}_n^p)}$.
By the 2-connectivity of $G$ and Theorem~\ref{thm:laplacian of 1-dim},
a generic 1-dimensional framework $(G,x)$ has a $1$-st coordinated self-stress $\omega^1$ such that $\rank L_{G, \omega^1} = n-2$.
Hence, by Lemma~\ref{lem:COV_test}, ${\cal V}$ is not 1-tangentially weakly defective.
By Corollary~\ref{cor:MM}, ${\cal V}$ is $d$-identifiable.
By Proposition~\ref{prop:iden_global}, 
any generic framework $(G,\bp)$ is globally rigid in $\ell_p^d$.
\end{proof}

\section{Characterisation in the plane}\label{sec:plane}
In this section we give a proof of Theorem~\ref{thm:main}, a characterisation of generic global rigidity in the $\ell_p$-plane. 
In view of Theorem~\ref{thm:COV_test} and Theorem~\ref{thm:necessity}, what remains for Theorem~\ref{thm:main} is to prove the following sufficiency part.
\begin{theorem}\label{thm:sugiyama}
Let $p$ be an integer with $p>2$ and $(G,\bp)$ be a generic two-dimensional framework with $n$ vertices.
If $G$ is redundantly 2-tree-connected, then $(G,\bp)$ has a self-stress $\omega$ such that $\rank L_{G,\omega^k}=n-a-1$ for all $k \in \{1,2\}$, where $a$ denotes the number of 2-connected components in $G$.
\end{theorem}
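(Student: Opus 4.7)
The plan is to induct using the Dewar-Hewetson-Nixon inductive construction of 2-connected redundantly 2-tree-connected graphs, after first reducing to the 2-connected case. If $G_1,\dots,G_a$ are the 2-connected components of $G$, then each $G_i$ inherits redundant 2-tree-connectivity: for $e\in E(G_i)$, a pair of edge-disjoint spanning trees of $G-e$ restricts to a pair of edge-disjoint spanning trees of $G_i-e$, since any spanning tree of $G$ restricts to a spanning tree on every block. Taking the 2-connected case for granted, for each $G_i$ choose a generic self-stress $\omega_i$ of $(G_i,\bp|_{V(G_i)})$ with $\rank L_{G_i,\omega_i^k}=|V(G_i)|-2$ for $k=1,2$. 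The $\omega_i$ glue into a global self-stress $\omega$ of $(G,\bp)$ because the equilibrium at any cut vertex splits into a sum of block equilibria; redundant 2-tree-connectivity of each $G_i$ makes $\omega_i$ nowhere vanishing, so Lemma~\ref{lem:laplacian_property} applies iteratively along the block-cut tree and gives $\rank L_{G,\omega^k}=\sum_i(|V(G_i)|-2)=n-a-1$, using the standard identity $\sum_i|V(G_i)|=n+a-1$.

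For the 2-connected case, I would use the Dewar-Hewetson-Nixon construction together with Proposition~\ref{prop: avoiding generic construction}, which allows the rank condition to be verified on a single infinitesimally rigid (not necessarily generic) framework. The base case is handled by direct computation in the style of Example~\ref{ex: base case of 1-dim}. Edge-addition steps are immediate since the stress space strictly grows and stresses can be combined. The substantive inductive step is the 1-extension: subdivide an edge $v_1v_2$ by a new vertex $w$ and add an edge $wz$. I would place $\bp'(w)$ at a point off the line through $\bp(v_1)$ and $\bp(v_2)$, solve the 2D equilibrium at $w$ for the three new stress values on $wv_1, wv_2, wz$ (which admits a one-parameter family of solutions, since the $2\times 2$ matrix with columns $(\bp'(v_j)-\bp'(w))^{p-1}$ for $j=1,2$ is invertible for generic $\bp'(w)$), and then reduce the new coordinated Laplacian to the old one by a Schur complement at $w$, in direct analogy with the proof of Lemma~\ref{lem: edge subdivision}. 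Any gluing moves in the construction should reduce to Lemma~\ref{lem:laplacian_property} or an analogous 2-separator statement.

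The main obstacle is the simultaneous control of the two coordinated Laplacians $L_{G,\omega^1}$ and $L_{G,\omega^2}$ from a single underlying self-stress $\omega$: the position of $w$ and the extension of $\omega$ cannot be chosen per coordinate in the 1-extension step, so one must verify that the Schur complement at $w$ yields the correct rank in both coordinate directions at once. A further subtlety is that extending $\omega$ to $\omega'$ affects the equilibrium at $z$, which acquires the new edge $wz$, so modifications of the stress on other edges incident to $z$ are in general required. I expect redundant 2-tree-connectivity of $G$ to supply enough flexibility in the stress space of $(G,\bp)$ to absorb such modifications, and verifying that one can do so without spoiling the rank condition at other vertices is the heart of the induction.
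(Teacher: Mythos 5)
Your reduction to the $2$-connected case is sound and matches the paper: the blocks of $G$ inherit redundant $2$-tree-connectivity, a generic stress is nonvanishing on each block and vanishes on bridges, and iterating Lemma~\ref{lem:laplacian_property} along the block-cut tree gives $\rank L_{G,\omega^k}=\sum_i(|V(G_i)|-2)=n-a-1$. The gap is in the $2$-connected case. You say you will use the Dewar--Hewetson--Nixon construction, but the substantive step you then analyse is a $1$-extension (subdivide $v_1v_2$ by $w$ and add $wz$), which is \emph{not} one of the operations in Theorem~\ref{thm:DHN (2,2)-connected}: that construction builds $2$-connected redundantly $2$-tree-connected graphs from $K_5^-$ or $B_1$ by $K_4^-$-extensions, edge additions, and generalised vertex splits. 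No $1$-extension-based inductive construction of this class is available or cited, so your induction has no combinatorial backbone; in particular you never address the generalised vertex splitting step, which is where the real work lies (in the paper it requires a degeneration argument, Lemma~\ref{lem: generalised vertex split}: place $v_0$ coincident with $v_1$, invoke the $uv$-coincident rigidity characterisation, let the stress on $v_0v_1$ blow up like $1/t$, and show the Schur complement of the coordinated Laplacian at $v_0$ converges to the old one).

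Even taken on its own terms, your $1$-extension step is incomplete. Solving the equilibrium only at $w$ for the three new stress values does not produce a self-stress of $(G',\bp')$: the old equilibria at $v_1$ and $v_2$ used the term $\omega(v_1v_2)(\bp(v_2)-\bp(v_1))^{p-1}$, which is now replaced by $\omega'(wv_i)(\bp'(w)-\bp(v_i))^{p-1}$, a different vector for a generic off-line placement of $w$ (and similarly the equilibrium at $z$ is disturbed, as you note). For $p\neq 2$ the Connelly trick of a collinear placement does not simply transplant, and the hoped-for ``flexibility of the stress space'' that would let you repair the equilibria at $v_1$, $v_2$, $z$ without destroying the rank of \emph{both} coordinated Laplacians is exactly the unproven heart of the matter. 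The paper avoids this by exploiting, for the $K_4^-$-extension, the non-generic placement of the two new vertices at the corners of an axis-aligned rectangle (so that the new $K_4$ is dependent and the deleted row of $v_2v_3$ is recovered by a row combination), followed by a $2\times 2$ Schur complement, with Proposition~\ref{prop: avoiding generic construction} transferring the conclusion to generic frameworks; nothing in your proposal substitutes for either this construction or for the vertex-splitting limit argument.
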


The proof of Theorem~\ref{thm:sugiyama} follows the same story as 
the argument of Connelly~\cite{connelly2005generic} and Jackson and Jord{\'a}n~\cite{jackson2005connected} for characterising global rigidity in the Euclidean plane, and it  consists of the following three steps:
(Step 1) Prove that a graph in the target graph class can be constructed from small graphs by a sequence of simple graph operations;
(Step 2) Prove that the base graphs in the above construction satisfy Theorem~\ref{thm:sugiyama};
(Step 3) Prove that each graph operation preserves the property in the statement of Theorem~\ref{thm:sugiyama}.
Step~1 has been already studied extensively~\cite{jackson2019,dewar2024uniquely}, so we can reuse those results. 
We consider the following two graph operations.

    Let $G = (V, E)$ be a simple graph.
    A graph $G' = (V', E')$ is obtained from $G$ by a \textit{$K_4^{-}$-extension} if $G'$ is obtained from $G$ by the following process:
    \begin{itemize}
    \item[(i)] Delete an edge $v_1v_2 \in E$.
    \item[(ii)] Add two new vertices $u_1$ and $u_2$ so that $V' = V \cup \{u_1, u_2\}$.
    \item[(iii)] Add five new edges among $v_1, v_2, u_1,$and $u_2$ so that 
    $E' = (E \setminus \{v_1v_2\}) \cup \{v_1u_1, v_1u_2, v_2u_1, v_2u_2, u_1u_2\}$.
    \end{itemize}

    Let $G$ be a simple graph.
    A graph $G'$ is obtained from $G$ by a \textit{generalised vertex splitting} if $G'$ is obtained from $G$ by the following process:
    \begin{itemize}
    \item[(i)]Choose $v \in V$, a partition $N_0, N_1$ of the neighbours of $v$, and a vertex $x \in V(G) \setminus N_0$.
    \item[(ii)] Delete $v$ from $G$ and add two new vertices $v_0$ and $v_1$ joined to $N_0, N_1$, respectively so that $V(G') = (V(G) \setminus \{v\}) \cup \{v_0, v_1\}$.
    \item[(iii)] Add two new edges $v_0v_1$ and $v_0 x$.
    \end{itemize}

Our proof of Theorem~\ref{thm:sugiyama} uses
the following inductive construction due to Dewar, Hewetson, and Nixon~\cite{dewar2024uniquely}, which is built on a result by Jackson and Nixon~\cite{jackson2019}. 

\begin{theorem}[Dewar, Hewetson, and Nixon~\cite{dewar2024uniquely}]\label{thm:DHN (2,2)-connected}
Let $G$ be a graph. Then, $G$ is 2-connected and redundantly two-tree-connected if and only if $G$ can be generated from $K_5^{-}$ or $B_1$ depicted in Figure \ref{fig:base case} by $K_4^{-}$-extension, edge addition, and generalised vertex splits such that each intermediate graph $G'$ is 2-connected and redundantly two-tree-connected.
\end{theorem}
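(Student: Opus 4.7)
The plan is to first reduce to the 2-connected case and then induct using the construction of Theorem~\ref{thm:DHN (2,2)-connected}.

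\textbf{Reduction to the 2-connected case.} I first observe that every block $B_i$ of $G$ is itself 2-connected and redundantly 2-tree-connected: a spanning tree of $G$ restricts to a spanning tree of each block, so $G$ being 2-tree-connected forces each block to be 2-tree-connected, and removing an edge of $B_i$ only affects the spanning subtrees inside $B_i$. Moreover, the stress space of $(G,\bp)$ decomposes as $\bigoplus_i \mathcal{S}_i$, where $\mathcal{S}_i$ is the stress space of $(B_i,\bp|_{V(B_i)})$ extended by zero to $E(G)\setminus E(B_i)$: each such extension is itself a self-stress of $(G,\bp)$ because the equilibrium at a vertex splits along blocks, and a dimension count $\sum_i(|E(B_i)|-d|V(B_i)|+d)=|E(G)|-dn+d$ matches the full stress space of $G$. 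Hence a generic self-stress $\omega$ of $(G,\bp)$ restricts to a generic self-stress $\omega_i$ on each block. Iterating Lemma~\ref{lem:laplacian_property} along the block-cut tree (its nonzero-weight hypothesis is generically satisfied), then applying the 2-connected case to each block, yields
\[
\rank L_{G,\omega^k} \;=\; \sum_{i=1}^{a} \rank L_{B_i,\omega_i^k} \;=\; \sum_{i=1}^{a}(|V(B_i)|-2) \;=\; n-a-1,
\]
where $\sum_i |V(B_i)| = n+a-1$ follows from the tree structure over the cut vertices.

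\textbf{Induction for the 2-connected case.} Now $a=1$ and the target is $\rank L_{G,\omega^k}=n-2$ for both $k\in\{1,2\}$. I induct on $|V(G)|+|E(G)|$ using Theorem~\ref{thm:DHN (2,2)-connected}. By Proposition~\ref{prop: avoiding generic construction}, at each step it suffices to exhibit a single (possibly non-generic) infinitesimally rigid framework $(G,\bp)$ in $\ell_p^2$ together with a self-stress $\omega$ satisfying the rank condition for both $k$. The base cases $K_5^-$ and $B_1$ are verified by explicit computation after a convenient choice of coordinates and stress.

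\textbf{Inductive steps.} For edge addition $G\to G+e$, keep $\bp$ and take $\omega' = \omega + c\,\omega_e$ for a new stress $\omega_e$; at $c=0$ the coordinated Laplacian equals $L_{G,\omega^k}$, which already has the maximal possible rank $n-2$, so by openness the rank is preserved for generic $c$. For a $K_4^-$-extension, place the new vertices $u_1,u_2$ near the midpoint of the deleted edge $v_1v_2$ and determine the five new stress values from the equilibrium equations at $u_1,u_2,v_1,v_2$; a $2\times 2$ Schur complement at $\{u_1,u_2\}$ then reduces the Laplacian rank computation to $\rank L_{G,\omega^k}+2 = n$, analogously to the argument of Lemma~\ref{lem: edge subdivision}. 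For the generalised vertex split, set $\bp(v_1):=\bp(v)$ and $\bp(v_0):=\bp(v)+\delta$ for a small perturbation $\delta$, transfer the old stress on $\delta(v)$ to $\delta(v_0)\cup\delta(v_1)$ according to the partition $(N_0,N_1)$, and solve the two equilibrium equations at $v_0$ (one per coordinate) for the two unknown stress values $\omega'(v_0v_1)$ and $\omega'(v_0x)$; a further Schur complement analysis yields the required rank $n'-2$.

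\textbf{Main obstacle.} The principal difficulty is maintaining the Laplacian rank condition for \emph{both} coordinate-weighted Laplacians $L_{G,\omega^1}$ and $L_{G,\omega^2}$ at once. The coordinated weight $\omega^k(ij)$ involves $((\bp(j))_k-(\bp(i))_k)^{p-2}$, so any new vertex placement that equalises a coordinate of two adjacent vertices degenerates the corresponding Laplacian weight. The generalised vertex split is especially delicate: the two-dimensional vector equilibrium at $v_0$ must be uniquely solvable for the two free stress parameters $\omega'(v_0v_1),\omega'(v_0x)$, forcing $\delta$ to be chosen so that the $2\times 2$ coefficient matrix is non-singular and, simultaneously, so that no coordinate difference along an edge of $G'$ vanishes for either $k=1$ or $k=2$. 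This coordination across both axes is the core new ingredient beyond the one-dimensional subdivision analysis in Lemma~\ref{lem: edge subdivision}, and securing it uniformly across the three construction operations is where the technical heart of the proof lies.
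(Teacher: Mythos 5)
Your proposal does not prove the statement in question. Theorem~\ref{thm:DHN (2,2)-connected} is a purely graph-theoretic assertion: a graph is 2-connected and redundantly 2-tree-connected if and only if it can be built from $K_5^-$ or $B_1$ by $K_4^-$-extensions, edge additions, and generalised vertex splits with every intermediate graph remaining in the class. What you have written is instead a proof sketch of Theorem~\ref{thm:sugiyama} (the existence of a self-stress $\omega$ with $\rank L_{G,\omega^k}=n-a-1$): your block decomposition of the stress space, the Laplacian rank bookkeeping, the Schur-complement arguments for the three operations, and the discussion of keeping both coordinated Laplacians non-degenerate all concern stress matrices, which play no role in the statement to be proved. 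Worse, your induction explicitly \emph{invokes} Theorem~\ref{thm:DHN (2,2)-connected} as its engine, so read as a proof of that theorem the argument is circular. (Note also that the paper itself does not prove this theorem; it is quoted from Dewar, Hewetson, and Nixon, building on Jackson and Nixon.)

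A genuine proof would need two entirely different ingredients. For the ``if'' direction one must verify that each of the three operations preserves both 2-connectivity and redundant 2-tree-connectivity, which is a check on the sparsity count $|E|\geq 2|V|-2$ for the relevant subgraphs after deleting any edge, together with elementary connectivity arguments. The substantive ``only if'' direction requires showing that every 2-connected redundantly 2-tree-connected graph other than $K_5^-$ and $B_1$ admits an inverse operation (an edge deletion, a $K_4^-$-reduction, or a contraction undoing a generalised vertex split) landing back in the class; this is typically carried out by a combinatorial analysis of minimum-degree vertices and tight sets in the count matroid, and none of that machinery appears in your write-up.
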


The following lemma solves Step 2 in the proof of Theorem~\ref{thm:sugiyama}.

\begin{lemma}\label{lem: base case of 2-dim}
    Let $G = K_5^{-}$ or $G = B_1$ and $(G, \bp)$ be a generic two-dimensional  framework.
    Then, $(G, \bp)$ has a self-stress $\omega$ such that $\rank L_{G, \omega^k} = |V(G)|-2$ for all $k$.
\end{lemma}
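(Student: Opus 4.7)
The plan is to invoke the ``moreover'' clause of Proposition~\ref{prop: avoiding generic construction}, which reduces the lemma to exhibiting, for each base graph $G\in\{K_5^-, B_1\}$, a single (not necessarily generic) framework $(G,\bp^*)$ in $\ell_p^2$ together with a self-stress $\omega$ satisfying $\rank L_{G,\omega^k}=|V(G)|-2$ for merely one value of $k$. Once this data is produced, the proposition promotes the rank condition to every generic framework, every generic self-stress, and every coordinate direction simultaneously, which is precisely what the lemma asserts.

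The construction of $(G,\bp^*)$ I propose is a collinear degeneration: set $\bp^*(v)=(x(v),0)$ where $x\colon V(G)\to\mathbb{R}$ is a generic one-dimensional point configuration. Because $p-1\ge 1$, the vector $(\bp^*(j)-\bp^*(i))^{p-1}$ has vanishing second coordinate, so the equilibrium system (\ref{eq:equilibrium_p}) decouples: its $y$-component is trivially satisfied, while its $x$-component coincides with the one-dimensional equilibrium for $(G,x)$. Consequently, every self-stress of the one-dimensional framework $(G,x)$ lifts verbatim to a self-stress $\omega$ of $(G,\bp^*)$, and the coordinated weight (\ref{eq:coordinated}) for $k=1$ is $\omega(ij)(x(j)-x(i))^{p-2}$, which is exactly the first-coordinated self-stress of $(G,x)$. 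Hence the $2$-dimensional Laplacian $L_{G,\omega^1}$ coincides with the one-dimensional one.

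The remaining input is Theorem~\ref{thm:laplacian of 1-dim}. Both $K_5^-$ and $B_1$ are $2$-connected (the former by inspection, the latter by its role as a base graph in Theorem~\ref{thm:DHN (2,2)-connected}), so in the notation of Theorem~\ref{thm:laplacian of 1-dim} we have $a=b=1$ and the generic one-dimensional self-stress $\omega$ satisfies $\rank L_{G,\omega^1}=|V(G)|-a-b=|V(G)|-2$. Pulling this back along the collinear embedding supplies the required data on the $2$-dimensional side.

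The only point that might require care is to confirm that the ``moreover'' clause of Proposition~\ref{prop: avoiding generic construction} applies to our non-rigid framework $(G,\bp^*)$: reading the statement, the clause demands only a self-stress with the coordinated rank condition, not infinitesimal rigidity of $(G,\bp^*)$ itself, so the collinear construction is admissible. If one prefers to feed the proposition an infinitesimally rigid framework, a small perturbation of the second coordinates, chosen algebraically independent over $\mathbb{Q}(x)$, restores infinitesimal rigidity while the stress $\omega$ deforms to a generic self-stress whose coordinated Laplacians preserve the rank by the rational dependence of $\omega$ and $L_{G,\omega^k}$ on $\bp$. This perturbation argument is the main obstacle to watch, but the rationality structure makes it routine.
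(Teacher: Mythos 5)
There is a genuine gap, and it sits exactly where you flagged it: the application of the ``moreover'' clause of Proposition~\ref{prop: avoiding generic construction} to a collinear witness. The justification of that proposition is the rational parametrisation $\omega(\bp,\bc)=\sum_e c_e\omega_e(\bp)$ of self-stresses over a minimally locally rigid spanning subgraph, together with lower semicontinuity of rank: the rank of $L_{G,\omega(\bp,\bc)^k}$ is maximised at generic $(\bp,\bc)$, so a witness forces the generic rank up only if the witness pair (framework, stress) is a specialisation of that family, i.e.\ a limit of pairs (generic framework, its stress). Infinitesimal rigidity of the witness guarantees this, because then the stress space already has the generic dimension $|E|-(2|V|-2)$ and varies continuously nearby; this is why the paper, each time it invokes the proposition (in its proof of this lemma and in Lemmas~\ref{lem: edge subdivision} and~\ref{lem:K4-extension}), also verifies $\rank J(\pi_G\circ f_p^{\times 2})(\bp)=2|V(G)|-2$ for the exhibited framework. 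Your collinear configuration fails this badly: all Jacobian columns belonging to the second coordinate vanish, so $\rank J=|V(G)|-1$ and the stress space has dimension $|E|-(|V|-1)$ ($5$ for $K_5^-$, $7$ for $B_1$) instead of the generic dimension $1$. A generic one-dimensional stress of $(G,\bx)$ is then not, in general, a limit of stresses of nearby non-degenerate two-dimensional frameworks (that limit is only a $1$-dimensional subspace of your $5$- or $7$-dimensional stress space), so the semicontinuity argument never reaches your witness; the literal reading of the ``moreover'' clause without a rigidity hypothesis is precisely the case its proof does not cover.

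Your fallback perturbation does not repair this. Perturbing the second coordinates collapses the stress space from dimension $5$ (resp.\ $7$) down to $1$, and there is no reason the essentially unique stress of the perturbed generic framework is a deformation of your chosen generic one-dimensional stress; ``rational dependence'' maximises rank along the family $\omega(\bp,\bc)$, into which your $\omega$ has never been placed. To make the degeneration idea work you would have to identify the actual limit of the two-dimensional stress spaces at the collinear configuration and show that the first-coordinated Laplacian of that particular limiting stress has rank $|V(G)|-2$ --- a delicate limit analysis of the kind the paper performs in Lemma~\ref{lem: generalised vertex split}, and which Theorem~\ref{thm:laplacian of 1-dim} (a statement about generic one-dimensional stresses) does not supply. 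The paper sidesteps all of this by exhibiting explicit non-degenerate integer configurations for $K_5^-$ and $B_1$, checking $\rank J(\pi_G\circ f_p^{\times 2})(\bp)=2|V(G)|-2$, computing the unique self-stress, and verifying the rank of one coordinated Laplacian by hand; only then does Proposition~\ref{prop: avoiding generic construction} legitimately transfer the conclusion to every generic framework, generic stress, and every coordinate $k$.
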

\begin{figure}
\centering
\begin{minipage}{0.4\textwidth}
\centering
\includegraphics[scale=0.6]{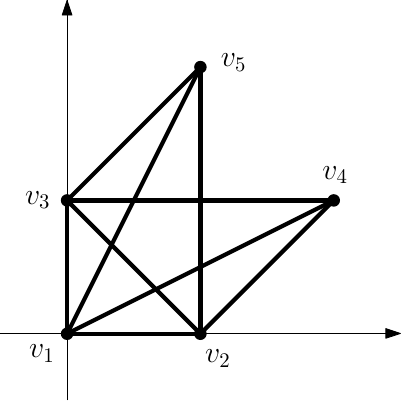}
    

\par
(a)
\end{minipage}
\begin{minipage}{0.4\textwidth}
\centering
\includegraphics[scale=0.6]{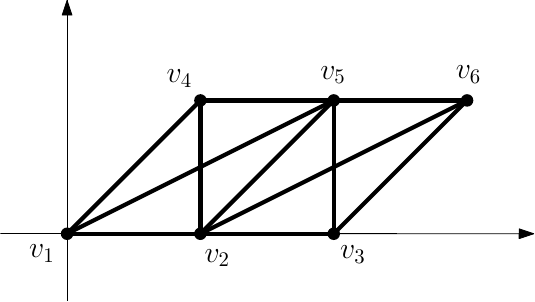}

\end{minipage}
\caption{(a) $K_5^-$ and (b) $B_1$.} \label{fig:base case}
\end{figure}

\begin{proof}
    By Proposition \ref{prop: avoiding generic construction}, it suffices to show that there exists a framework $(G, \bp)$ with
    $J(\pi_G\circ f_p^{\times d})(\bp)=2|V(G)|-2$ and having a $k$-th coordinated self-stress $\omega^k$ for some $k$ such that $\rank L_{G, \omega^k} = |V(G)| - 2$.
    We shall construct such $\bp$ explicitly for each of $K_5^-$ and $B_1$ as follows.
    
\medskip

\noindent 
{\bf Case 1.} 
Suppose  $G = K_5^{-}$.
    Let $(G, \bp)$ be defined by 
    \begin{equation*}
       \bp = (\bp(v_1), \ldots, \bp(v_5)) = ((0, 0), (1, 0), (0, 1), (2, 1), (1, 2)).
    \end{equation*}
    See Figure~\ref{fig:base case}(a). One can rapidly check that $\rank J(\pi_G\circ f_p^{\times 2})(\bp) = 8 = 2|V(G)|-2$ and 
    \begin{align*}
        \omega&=(\omega(v_1v_2), \omega(v_1v_3), \omega(v_1v_4), \omega(v_1v_5), \omega(v_2v_3), \omega(v_2v_4), \omega(v_2v_5), \omega(v_3v_4), \omega(v_3v_5))\\
        &= (2^p+2, 2^p+2, -2, -2, -2^p, 2, 2 - 2^{2-p}, 2 - 2^{2-p}, 2)
    \end{align*}
    is a unique self-stress of $(G, \bp)$ up to scalar multiplication. The Laplacian matrix weighted by the first-coordinated self-stress $\omega^1$ is 
    \begin{align*}
        L_{G, \omega^1} &= \begin{pmatrix}
            2^{p-1} & -2^p - 2 & 0 & 2^{p-1} & 2\\
            -2^p-2 & 4 & 2^p & -2 & 0\\
            0 & 2^p & 1 - 2^{p-1} & 1 - 2^{p-1} & -2\\
            2^{p-1} & -2 & 1 - 2^{p-1} & 1 & 0\\
            2 & 0 & -2 & 0 & 0
        \end{pmatrix}.
    \end{align*}
    By the triangular structure in the right-bottom three-by-three submatrix implies $\rank L_{G, \omega^1}\geq 3$.
    Since $\rank L_{G, \omega^1}\leq 3$, we obtain $\rank L_{G, \omega^1}=3$ for all $p$.

\medskip

\noindent 
{\bf Case 2.} 
Suppose $G = B_1$.
    Let $(G, \bp)$ be defined by 
    \begin{equation*}
        \bp = (\bp(v_1), \ldots, \bp(v_6))=
    ((0, 0), (1, 0), (2, 0), (1, 1), (2, 1), (3, 1)).
    \end{equation*}
    See Figure~\ref{fig:base case}(b). One can rapidly check that $\rank J(\pi_G\circ f_p^{\times 2})(\bp) = 10 = 2|V(G)|-2$ and 
    \begin{align*}
        \omega&=(\omega(v_1v_2), \omega(v_1v_4), \omega(v_1v_5), \omega(v_2v_3), \omega(v_2v_4), \omega(v_2v_5), \omega(v_2v_6), \omega(v_3v_5), \omega(v_3v_6), \omega(v_4v_5), \omega(v_5v_6))\\
        &= (2^{p-1}-1, 1, -1, -1, -1, 0, 1, 1, -1, 1, -2^{p-1}+1)
    \end{align*}
    is a unique self-stress of $(G, \bp)$ up to scalar multiplication.
    The Laplacian matrix weighted by the second coordinated self-stress $\omega^2$ is 
    \begin{align*}
         L_{G, \omega^2} &= \begin{pmatrix}
             0 & 0 & 0 & -1 & 1 & 0\\
             0 & 0 & 0 & 1 & 0 & -1\\
             0 & 0 & 0 & 0 & -1 & 1\\
             -1 & 1 & 0 & 0 & 0 & 0\\
             1 & 0 & -1 & 0 & 0 & 0\\
             0 & -1 & 1 & 0 & 0 & 0
         \end{pmatrix}
    \end{align*}
    and $\rank L_{G, \omega^2} = 4 = |V(G)| - 2$ for all $p$.
\end{proof}

Now we move to Step 3 in the proof of Theorem~\ref{thm:sugiyama}.
We first look at $K_4^{-}$-extension operations.
Our proof of the following lemma for $K_4^{-}$-extension operations is similar to the argument of Connelly~\cite{connelly2005generic} for the fact that a 1-extension operation preserves the stress condition in the Euclidean case.
Instead of using the dependency of a colinear triangle, we use the fact that $K_4$ placed on the vertices of an axis-aligned rectangle is dependent in $\ell_p^2$.

\begin{lemma}\label{lem:K4-extension}
Let $p$ be an integer with $p>2$ and $(G, \bp)$ and $(G', \bp')$ be generic locally rigid frameworks in $\ell_p^2$.
Suppose that $G'$ is a $K_4^{-}$-extension of $G$ along an edge $e$ and that  $(G, \bp)$ has a self-stress $\omega$ such that
$\omega(e)\neq 0$ and $\rank L_{G, \omega^{k}} = |V(G)| - 2$ for all $k$.
Then, $(G', \bp')$ has a self-stress $\omega'$ such that $\rank L_{G', {\omega'}^{k}} = |V(G')| - 2$ for all $k\in \{1,2\}$.
\end{lemma}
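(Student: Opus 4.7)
My plan is to invoke Proposition~\ref{prop: avoiding generic construction}, which reduces the claim to producing a single, possibly non-generic, infinitesimally rigid framework $(G', \bp')$ carrying a self-stress $\omega'$ with $\rank L_{G', {\omega'}^1} = |V(G')|-2$; the conclusion for the generic $(G', \bp')$ of the hypothesis and for both $k \in \{1, 2\}$ then follows. Starting from the given $(G, \bp)$, I keep $\bp' = \bp$ on $V(G)$ and place the two new vertices so that $\{v_1, v_2, u_1, u_2\}$ form the corners of an \emph{axis-aligned rectangle} with $v_1v_2$ as a diagonal: writing $\bp(v_i) = (x_i, y_i)$, I set $\bp'(u_1) = (x_1, y_2)$ and $\bp'(u_2) = (x_2, y_1)$. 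Genericity of $\bp$ guarantees $x_1\neq x_2$ and $y_1\neq y_2$, so the rectangle is non-degenerate. Intuitively, this choice plays the role of the collinear configuration used by Connelly in the Euclidean case.

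The key geometric fact is that in $\ell_p^2$ with $p$ even, $K_4$ placed at the four corners of an axis-aligned rectangle admits a nontrivial self-stress: a direct coordinate computation, using $0^{p-1}=0$ and $(-t)^{p-1} = -t^{p-1}$, yields the Jacobian row-dependence
\begin{equation*}
r_{v_1v_2} + r_{u_1u_2} - r_{v_1u_1} - r_{v_1u_2} - r_{v_2u_1} - r_{v_2u_2} = 0,
\end{equation*}
where $r_e$ denotes the row of $J(\pi_{K_4}\circ f_p^{\times 2})(\bp')$ corresponding to edge $e$. Using this, I define $\omega'$ on $E(G')$ by $\omega'(e) = \omega(e)$ for $e\in E(G)\setminus\{v_1v_2\}$, $\omega'(v_iu_j) = \omega(v_1v_2)$ on the four rectangle sides, and $\omega'(u_1u_2) = -\omega(v_1v_2)$. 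The dependence immediately yields equilibrium at $u_1, u_2$; at $v_1, v_2$ the same dependence restricted to those columns, combined with the equilibrium of $\omega$ in $(G,\bp)$, cancels the missing contribution of the removed edge $v_1v_2$. Infinitesimal rigidity of $(G', \bp')$ follows from a block-rank argument: removing the row $r_{v_1v_2}$ from $J(\pi_G\circ f_p^{\times 2})(\bp)$ preserves its rank because $\omega(v_1v_2)\neq 0$, and the $5\times 4$ submatrix of $J(\pi_{G'}\circ f_p^{\times 2})(\bp')$ indexed by the five new edges and by the four new columns at $u_1, u_2$ has an explicit sparse form with full column rank.

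For the Laplacian rank I will use a Schur complement at the $2\times 2$ block $D$ indexed by $\{u_1, u_2\}$. The vertical sides $v_1u_1$ and $v_2u_2$ have zero first-coordinate differences, so $\omega'^1(v_1u_1) = \omega'^1(v_2u_2) = 0$, and the remaining contributions from $v_1u_2, v_2u_1, u_1u_2$ are each proportional to $(x_2-x_1)^{p-2}$. Consequently $D$ is anti-diagonal of the form $\bigl(\begin{smallmatrix} 0 & \lambda \\ \lambda & 0\end{smallmatrix}\bigr)$ with $\lambda = \omega(v_1v_2)(x_2-x_1)^{p-2}\neq 0$, hence invertible. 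A direct computation then shows that the Schur complement $M - CD^{-1}C^\top$ coincides with $L_{G, \omega^1}$: the correction $-CD^{-1}C^\top$ precisely restores the $(v_1, v_2)$ and $(v_2, v_1)$ entries that were absent from $M$ because $v_1v_2\notin E(G')$. Therefore $\rank L_{G', {\omega'}^1} = 2 + \rank L_{G, \omega^1} = |V(G')|-2$, and Proposition~\ref{prop: avoiding generic construction} finishes the proof.

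The main obstacle is identifying the correct non-generic placement of $u_1, u_2$. The axis-aligned rectangle is the natural candidate because it simultaneously (a) produces the $K_4$ Jacobian dependence displayed above---a phenomenon unavailable in Euclidean space, which is why Connelly is forced to use collinearity instead---so that $\omega'$ can be constructed from $\omega$, and (b) makes the first-coordinate differences along the vertical sides $v_1u_1$ and $v_2u_2$ vanish, which is exactly what causes the Schur complement to collapse cleanly onto $L_{G,\omega^1}$. Once this configuration is in hand, all remaining verifications reduce to routine linear algebra.
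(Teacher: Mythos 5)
Your proposal is correct and is essentially the paper's own argument: the same axis-aligned-rectangle placement of the two new vertices (mixed coordinates of the endpoints of the deleted edge), the same transfer of $\omega$ to $\omega'$ via the $K_4$ row dependence, the same Schur complement at the $2\times 2$ block of the new vertices collapsing onto $L_{G,\omega^1}$, and the same reduction via Proposition~\ref{prop: avoiding generic construction}. The only cosmetic difference is how infinitesimal rigidity of the special framework is checked (your block-rank lower bound versus the paper's explicit row reduction), and both verifications are valid.
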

\begin{proof}
    Let  $V(G) = \{v_2, \ldots, v_n\}$ and suppose that $G'$ is constructed from $G$ by deleting $e=v_2v_3$, adding two new vertices $v_0, v_1$ and five new edges $v_0v_1, v_0v_2, v_0v_3, v_1v_2, v_1v_3$.
    By Proposition \ref{prop: avoiding generic construction}, it suffices to show that there exist a framework $(G', \bq)$ with $\rank  J(\pi_{G'}\circ f_p^{\times 2})(\bq) = 2|V(G')| - 2$ and a self-stress $\omega'$ of $(G',\bq)$ such that $\rank L_{G', {\omega'}^{k}} = |V(G')| - 2$ for some $k$.

    For each $v_i \in V(G)$, denote $\bp(v_i)=(x_i,y_i)$.
    We define $\bq:V(G')\rightarrow \mathbb{R}^2$ by  
    \begin{equation*}
        \bq(u) = \begin{cases}
            (x_0,y_0):=(x_2,y_3) & (u=v_0)\\
            (x_1,y_1):=(x_3,y_2) & (u=v_1)\\
            \bp(u) &(u \in V(G))
        \end{cases}
    \end{equation*}
    for $u\in V(G')$.
    Now consider the rows of $J(\pi_{G'}\circ f_p^{\times 2})(\bq)$ corresponding to the five new edges,
    \begin{equation*}
    \fontsize{4pt}{4pt}\selectfont
    \begin{blockarray}{ccccccc}
    v_0&v_1&v_2&v_3& v_4 & \cdots&v_n\\
        \begin{block}{(ccccccc)}
         (x_0-x_1)^{p-1}\ (y_0-y_1)^{p-1} & (x_1-x_0)^{p-1}\ (y_1-y_0)^{p-1} &0&0& 0 & \ldots&0\\
        (x_0-x_2)^{p-1}\ (y_0-y_2)^{p-1}  & 0 & (x_2-x_0)^{p-1}\ (y_2-y_0)^{p-1} &0&0 & \ldots&0\\
         (x_0-x_3)^{p-1}\ (y_0-y_3)^{p-1} &0&0& (x_3-x_0)^{p-1}\ (y_3-y_0)^{p-1} & 0 &\ldots&0\\
         0& (x_1-x_2)^{p-1}\ (y_1-y_2)^{p-1} & (x_2-x_1)^{p-1}\ (y_2-y_1)^{p-1} &0& 0 &\ldots&0\\
        0& (x_1-x_3)^{p-1}\ (y_1-y_3)^{p-1} &0& (x_3-x_1)^{p-1}\ (y_3-y_1)^{p-1} & 0 & \ldots&0\\
        \end{block}
    \end{blockarray}.
    \end{equation*}
    By $x_0=x_2, y_0=y_3, x_1=x_3, y_1=y_2$, the sum of the second, third, fourth, and fifth rows minus the first row is equal to 
    \begin{equation*}\footnotesize
        \begin{blockarray}{ccccccc}
        v_0&v_1&v_2&v_3& v_4 & \cdots&v_n\\
            \begin{block}{(ccccccc)}
             0 & 0 & (x_2-x_3)^{p-1}\ (y_2-y_3)^{p-1} & (x_3-x_2)^{p-1}\ (y_3-y_2)^{p-1} & 0 & \ldots&0 \\
            \end{block}
        \end{blockarray}
    \end{equation*}
    which is the row of $v_2v_3$.
    This implies that, by elementary operations, one can convert
    $J(\pi_{G'}\circ f_p^{\times 2})(\bq)$  to the following form
    \begin{equation*}
        \left(
        \begin{array}{c | c | c}
         {\begin{array}{cc} (x_0-x_2)^{p-1} &  (y_0-y_2)^{p-1}  \\ (x_0-x_3)^{p-1} & (y_0-y_3)^{p-1} \end{array}
        } & 
         O & \ast
        \\
        \hline
        O & 
        {\begin{array}{cc}(x_1-x_2)^{p-1} & (y_1-y_2)^{p-1} \\ 
        (x_1-x_3)^{p-1} & (y_1-y_3)^{p-1} \end{array}
        } & \ast
        \\
        \hline
        O & O & J(\pi_G\circ f_p^{\times 2})(\bp)
        \end{array}
        \right).
    \end{equation*}
    where the top four rows are corresponding to 
    $v_0v_2, v_0v_3, v_1v_2, v_1v_3$.
    Since the top-left and the middle two-by-two matrices are both non-singular, we obtain $\rank  J(\pi_{G'}\circ f_p^{\times 2})(\bq) = \rank  J(\pi_G\circ f_p^{\times 2})(\bp) + 4 =2|V(G')| - 2$.

    Next we define a self-stress $\omega'$ of $(G',\bq)$.
    Let $\omega$ be a generic self-stress of $(G,\bp)$,
    and define  $\omega'$ by
    \begin{equation*}
        \omega'(e) = \begin{cases}
            -\omega(v_1v_3) & (e = v_0v_1) \\
            \omega(v_2v_3) & (e \in \{v_0 v_2, v_0v_3, v_1v_2, v_1 v_3\})\\
            \omega(e) & (e \in E \setminus (v_2 v_3))
        \end{cases}
    \end{equation*}
    for $e\in E(G')$.
    Since the sum of the rows of $v_0v_2, v_0v_3, v_1v_2, v_1v_3$ minus that of $v_0v_1$ leads to the row of $v_2v_3$, $\omega'$ is a self-stress of $(G',\bq)$.

   Now, we look at the first-coordinated self-stress of $\omega'$. 
   By $x_0=x_2$ and $x_1=x_3$, 
   we have $\omega'^1(v_0v_1)=\omega'(v_0v_1)(x_0-x_1)^{p-2}=-\omega(v_2v_3)(x_2-x_3)^{p-2}=-\omega^1(v_2v_3)$.
   Similarly, we have 
   $\omega'^1(v_0v_2)=\omega^1(v_2v_3)$, 
   $\omega'^1(v_0v_3)=0$, 
   $\omega'^1(v_1v_2)=0$, 
   $\omega'^1(v_1v_3)=\omega^1(v_2v_3)$.
   Therefore,  the Laplacian matrix $L_{G', \omega'^1}$ is given by
    \begin{align}\label{eq:K4ex}
        \left(
         \begin{array}{c|c}
            \begin{array}{cc}
                0 & 0 \\
                0 & 0 
            \end{array}&  O\\
             \hline
             O & L_{G, \omega^1}
         \end{array}
        \right)
        +
        \left(\begin{array}{c | c}
        \begin{array}{cccc}
                0 & \omega^1(v_2v_3) & - \omega^1(v_2v_3) & 0\\
                \omega^1(v_2v_3) & 0 & 0 & -\omega^1(v_2v_3) \\
                - \omega^1(v_2v_3) & 0 & 0 & \omega^1(v_2v_3) \\
                0 & - \omega^1(v_2v_3) & \omega^1(v_2v_3) & 0 
            \end{array}
        & O \\
       \hline
       O & O
        \end{array}
        \right).
    \end{align}
    Since $\omega(v_2v_3)=\omega(e) \neq  0$ by the lemma assumption, $\omega^1(v_2v_3) \neq  0$ and hence the top-left two-by-two submatrix 
    $\begin{pmatrix} 0 & \omega^1(v_2v_3) \\\omega^1(v_2v_3) & 0 \end{pmatrix}$ 
    is non-singular. Moreover, the Schur complement of the top-left two-by-two submatrix in the right matrix in (\ref{eq:K4ex}) is the zero matrix.
    Hence, the Schur complement of the top-left two-by-two submatrix in $L_{G',\omega'^1}$ results in $L_{G,\omega^1}$.  
    Thus, $\rank L_{G', \omega'^1} = \rank L_{G, \omega^1} + 2 = |V(G')| - 2$.
    This completes the proof.
\end{proof}

The proof of the corresponding statement for generalised vertex splitting is more involved.
We introduce special notations to simplify the presentation.

For a two-dimensional framework $(G,\bp)$
and an edge weight $\omega:E(G)\rightarrow \mathbb{R}$, 
the $k$-th weighted direction vector $\bd_{\bp,\omega}^k(u,v)$ from $u$ to $v$ for $uv\in E$ is defined by 
\[
\bd_{\bp,\omega}^k(u,v)=\omega(uv) (\bp(v)-\bp(u))^k\in \mathbb{R}^2.
\]
Observe that the equilibrium condition at a vertex $u$ is written as 
$\sum_{v\in N_G(u)} \bd_{\bp,\omega}^{p-1}(u,v)=0$,
where $N_G(u)$ denotes the set of neighbours of $u$ in $G$.

\begin{lemma}\label{lem: generalised vertex split}
Let $G$ and $G'$ be 2-connected and redundantly 2-tree-connected graphs such that $G'$ is obtained from $G$ by a generalised vertex splitting.
Suppose a generic two-dimensional framework $(G, \bp)$ has a self-stress $\omega$ such that $\rank L_{G, \omega^{k}} = |V(G)| - 2$ for all $k \in \{1,2\}$.
Then, a generic two-dimensional framework $(G', \bp')$ has a self-stress $\omega'$ such that $\rank L_{G', {\omega'}^{k}} = |V(G')| - 2$ for all $k \in \{1,2\}$.
\end{lemma}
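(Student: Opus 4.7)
The strategy parallels the proofs of Lemmas~\ref{lem: edge subdivision} and~\ref{lem:K4-extension}. By Proposition~\ref{prop: avoiding generic construction}, it suffices to exhibit one (possibly non-generic) infinitesimally rigid framework $(G', \bp')$ together with a self-stress $\omega'$ such that $\rank L_{G', \omega'^{k}} = |V(G')| - 2$ for some $k \in \{1,2\}$; the full conclusion on generic frameworks and all $k$ then follows automatically. My plan is to construct $(G', \bp')$ and $\omega'$ as an explicit ``collapsed'' extension of $(G, \bp)$ and $\omega$.

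I would keep $\bp'(u) = \bp(u)$ for $u \in V(G) \setminus \{v\}$, set $\bp'(v_1) = \bp(v)$, and put $\bp'(v_0) = \bp(v) + \bm{\delta}$ for a small vector $\bm{\delta} \in \mathbb{R}^2$ to be chosen. Writing $A := \sum_{u \in N_1} \omega(vu)(\bp(u) - \bp(v))^{p-1}$, which is generically a nonzero element of $\mathbb{R}^2$, the inheritance $\omega'(v_1 u) = \omega(vu)$ for $u \in N_1$ together with the equilibrium condition at $v_1$ forces $\omega'(v_0 v_1)\,\bm{\delta}^{p-1} = -A$. Since $p-1$ is odd, this is solvable by choosing $\bm{\delta}$ parallel to the coordinatewise $(p-1)$-th root of $A$, parametrised by a real scalar $t$. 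The remaining new weights $\omega'(v_0 u)$ for $u \in N_0$ and $\omega'(v_0 x)$ are then fixed (with one scalar of leftover freedom) by the equilibrium at $v_0$, whose consistency rests on the identity $\sum_{u \in N_0} \omega(vu)(\bp(u) - \bp(v))^{p-1} = -A$ coming from the equilibrium of $\omega$ at $v$. Infinitesimal rigidity of $(G', \bp')$ is then verified by row reduction on $J(\pi_{G'} \circ f_p^{\times 2})(\bp')$ in the spirit of Lemma~\ref{lem:K4-extension}: linear combinations of the rows indexed by the new and modified edges incident to $v_0, v_1$, using $\bp'(v_0) - \bp'(v_1) = \bm{\delta}$, recover the rows of the edges of $G$ incident to $v$ plus two additional independent rows, pushing the rank up to $2(n+1) - 2$.

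The Laplacian rank claim would then be established by a Schur complement of $L_{G', \omega'^{k}}$ at the $2 \times 2$ block indexed by $\{v_0, v_1\}$, which is invertible since $\omega'^{k}(v_0 v_1) \neq 0$. The Schur complement is a symmetric matrix on $V(G) \setminus \{v\}$ that I expect to coincide with the Schur complement of $L_{G, \omega^{k}}$ at $\{v\}$ modulo a rank-one or rank-two perturbation coming from the new edge $v_0 x$. The hypothesis $\rank L_{G, \omega^{k}} = |V(G)| - 2$ forces the Schur complement of $L_{G, \omega^{k}}$ at $\{v\}$ to have rank $|V(G)| - 3$; showing that the perturbation does not alter this rank would give $\rank L_{G', \omega'^{k}} = 2 + (|V(G)| - 3) = |V(G')| - 2$. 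The principal obstacle is exactly this perturbation analysis: the new edge $v_0 x$ is the essential feature distinguishing generalised vertex splitting from ordinary vertex splitting, and its contribution must be shown not to collapse the rank. A judicious choice of the parameter $t$, and possibly of the coordinate $k$, together with the genericity of $\bp$, should rule out degenerate cancellations.
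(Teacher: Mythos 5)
Your overall shape is right (collapse $v_0$ onto the split vertex along the direction of the residual force $A$, then relate the two Laplacians by a Schur complement at the new vertex), and this is indeed the paper's strategy. However, there are genuine gaps. The central one is the stress construction: once you move $v_0$ to $\bp(v)+\bm{\delta}$ with $\bm{\delta}\neq 0$, you cannot keep $\omega'(e)=\omega(e)$ on the old edges and repair things only at $v_0$ and $v_1$. For each $u\in N_0$ the equilibrium at $u$ now reads $\omega'(v_0u)\,(\bp(v)+\bm{\delta}-\bp(u))^{p-1}+\{\text{unchanged terms}\}=0$, which is two scalar equations in the single unknown $\omega'(v_0u)$; generically it has no solution, and the same problem occurs at $x$ once $\omega'(v_0x)\neq 0$. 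So your $\omega'$ is not a self-stress of $(G',\bp')$ for any $\bm{\delta}\neq 0$, while at $\bm{\delta}=0$ the equilibrium at $v_1$ forces $\omega'(v_0v_1)\cdot 0=-A\neq 0$, which is impossible. The paper resolves exactly this tension by a limit argument: it divides the row of $v_0v_1$ in the rigidity matrix by $t$, extends the degenerate stress (value $-1$ on $v_0v_1$, $0$ on $v_0v_2$, $\omega^*$ elsewhere) continuously through the left kernels of the rescaled matrices $R_t$, and obtains genuine self-stresses $\hat{\omega}_t$ of $(G',\hat{\bp}_t)$ whose value on $v_0v_1$ blows up like $1/t$ and whose value on the extra edge $v_0v_2$ tends to $0$. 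That vanishing weight on $v_0x$ is precisely what makes the ``perturbation from the new edge'' harmless: the paper shows the diagonal entry of $L_{G',\hat{\omega}_t^{1}}$ at $v_0$ diverges and the Schur complement at $v_0$ converges to $L_{G,\omega^{*1}}$ as $t\to 0$, so the rank statement follows by semicontinuity. In your sketch this rank-preservation step is exactly the part you leave open (``should rule out degenerate cancellations''), and it is the heart of the proof, not a routine genericity check.

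Two further points need real arguments rather than assertions. First, the infinitesimal rigidity of the collapsed configuration is not obtainable by a local row reduction as in Lemma~\ref{lem:K4-extension}: since the rows of the edges $v_0u$, $u\in N_0$, at $\bp'$ only approximate the corresponding rows of $G$ at $\bp$, no exact elementary row operations recover $J(\pi_G\circ f_p^{\times 2})(\bp)$. The paper instead invokes the $v_0v_1$-coincident rigidity characterisation of Dewar--Hewetson--Nixon, which uses the hypotheses that $G'$ is redundantly rigid and that $G'/v_0v_1=G+v_1v_2$ is rigid, and then passes to small $t>0$ by semicontinuity of rank. Second, the nonvanishing of your vector $A$ (the paper's $\bd^*$) is not automatic from genericity of $\bp$ alone; the paper proves it by a rank count that again relies on the coincident-rigidity statement. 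Without these ingredients and without the $t\to 0$ limit analysis of the stress and the Schur complement, the proposal does not yet constitute a proof.
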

\begin{proof}
    Let $V(G) = \{v_1, \ldots, v_n\}$ and suppose that $G'$ is constructed from $G$ by first partitioning $N_G(v_1)$ into $N_0$ and $N_1$, replacing the edges $v_1u$ with $v_0u$ for $u\in N_0$, and adding $v_0 v_1$ and $v_0 v_2$ for some $v_2 \in V(G) \setminus (N_G(v_1) \setminus N_0)$.
    An edge weight $\omega$ of $G$ can be considered as an edge weight $\omega'$ of $G'$ by setting $\omega'(e)=\omega(e)$ if $e\in E(G')\cap E(G)$, $\omega'(v_0u)=\omega(v_1u)$ for $u\in N_0$, and $\omega'(v_0v_1)=\omega'(v_0v_2)=0$.
    By this convention, in the subsequent discussion we shall sometimes regard an edge weight of $G$ as that of $G'$.
    
    Pick a generic framework $(G,\bp)$ and a generic self-stress $\omega^*$ of $(G,\bp)$.
    Set a vector
    \begin{equation}\label{eq:d*}
    \bd^*:=\sum_{v_j\in N_0} \bd_{\bp,\omega^*}^{p-1}(v_j,v_1),
    \end{equation}
    and, for each real parameter $t$, 
    define a framework $(G',\hat{\bp}_t)$ by 
    \begin{equation*}
        \hat{\bp}_t(u) = \begin{cases}
            \bp(v_1) + (t \bd^* )^{\frac{1}{p-1}} &(u = v_0)\\
            \bp(u) & (u \in V(G))
        \end{cases}
    \end{equation*}
    for $u\in V(G')$.
    (Recall our convention of the $k$-th power notation for vectors.)
    We first investigate properties of $\bd^*$ and $(G',\hat{\bp}_0)$.
    \begin{claim}\label{claim:1}
    $(G'-v_0v_1,\hat{\bp}_0)$ is infinitesimally rigid
    and $\bd^*$ is a non-zero vector.
    \end{claim}
    \begin{proof}
    This is a direct consequence of the $uv$-coincidence rigidity characterisation due to Dewar, Hewetson, and Nixon~\cite{sean2024}.
    A graph $H$ is said to be {\em $uv$-coincident rigid} in $\ell_p^2$
    if there is an infinitesimally rigid framework $(H-uv,\bq)$ in $\ell_p^2$ such that $\bq(u)=\bq(v)$.
    Dewar, Hewetson, and Nixon~\cite{sean2024} showed that 
    a graph $H$ is $uv$-coincident rigid in $\ell_p^2$ if and only if 
    $H-uv$ and $H/uv$ are rigid in $\ell_p^2$,
    where $H/uv$ denotes the graph obtained from $H$ by identifying $u$ and $v$.

    Back to our problem, $G'-v_0v_1$ is rigid since $G'$ is redundantly rigid, and $G'/v_0v_1$ is rigid since $G'/v_0v_1=G+v_1v_2$ and $G$ is rigid.
    Hence, $G'$ is $v_0v_1$-coincident rigid,
    and so is $G'-v_0v_1$.
    Since $(G',\hat{\bp}_0)$ is a realisation of $G'$ such
    that $\hat{\bp}_0(v_0)=\hat{\bp}_0(v_1)$
    and the restriction of $\hat{\bp}_0$ to $V(G)$ is generic,
    $(G'-v_0v_1,\hat{\bp}_0)$ is infinitesimally rigid.

    To see $\bd^*\neq 0$, assume $\bd^*=0$.
    Then (using our conversion of converting an edge weight of $G$ to that of $G'$) we claim 
    \begin{equation}\label{eq:claim12}
    \text{every self-stress $\omega$ of $(G,\bp)$ is a self-stress of $(G'-v_0v_1-v_0v_2,\hat{\bp}_0)$.}
    \end{equation}
    Indeed, since $\bd^*=0$ is a linear relation in the entries of $\omega^*$ (cf.~(\ref{eq:d*}))
    and $\omega^*$ is taken to be generic, $\bd^*=0$ implies 
    that 
    $\sum_{u\in N_0} d_{\bp, \omega}^{p-1}(u,v_1)=0$
    holds
    for any self-stress $\omega$ of $(G,\bp)$.
    Also, the equilibrium condition at $v_1$ in $(G,\bp)$ gives
    $\sum_{u\in N_G(v_1)} d_{\bp, \omega}^{p-1}(u,v_1)=0$,
    which combined with the last relation gives
    $\sum_{u\in N_G(v_1)\setminus N_0} d_{\bp, \omega}^{p-1}(u,v_1)=0$.
    By $N_{G'-v_0v_1-v_0v_2}(v_0)=N_0$ and $N_{G'-v_0v_1-v_0v_2}(v_1)=N_G(v_1)\setminus N_0$, these are written as 
    \begin{equation*}
    \sum_{u\in N_{G'-v_0v_1-v_0v_2}(v_0)} d_{\hat{\bp}_0, \omega}^{p-1}(u,v_1)=0 \quad \text{and} \quad
    \sum_{u\in N_{G'-v_0v_1-v_0v_2}(v_1)} d_{\hat{\bp}_0, \omega}^{p-1}(u,v_1)=0,
    \end{equation*}
    and thus $\omega$ satisfies the equilibrium condition at $v_0$ and $v_1$ in $(G'-v_0v_1-v_0v_2,\hat{\bp}_0)$.
    The equilibrium condition at other vertices is a straightforward consequence  of the definition of $\hat{\bp}_0$ and the fact that $\omega$ is a self-stress of $(G,\bp)$,
    and hence (\ref{eq:claim12}) follows.

    Now (\ref{eq:claim12}) gives
    $\dim \ker J(\pi_G\circ f_p^{\times 2})(\bp)^{\top} 
    \leq \dim \ker J(\pi_{G'-v_0v_1-v_0v_2}\circ f_p^{\times 2})(\hat{\bp}_0)^{\top} 
    \leq \dim \ker J(\pi_{G'-v_0v_1}\circ f_p^{\times 2})(\hat{\bp}_0)^{\top}$.
    Also the rigidity of $G$ gives 
    $\dim \ker J(\pi_G\circ f_p^{\times 2})(\bp)^{\top}=|E(G)|-(2|V(G)|-2)$.
    Hence, by $|E(G'-v_0v_1)|=|E(G)|+1$ and $|V(G')|=|V(G)|+1$,
    $\rank J(\pi_{G'-v_0v_1}\circ f_p^{\times 2})(\hat{\bp}_0)
    = |E(G'-v_0v_1)|-\dim \ker J(\pi_{G'-v_0v_1}\circ f_p^{\times 2})(\hat{\bp}_0)^{\top}
    \leq |E(G'-v_0v_1)|- (|E(G)|-(2|V(G)|-2))
    =2|V(G)|-3$.
    This contradicts the infinitesimal rigidity of $(G'-v_0v_1,\hat{\bp}_0)$ proved in the first part.
    \end{proof}

    By the definition of $\hat{\bp}_t$, the row of $v_0 v_1$ in $J(\pi_{G'}\circ f_p^{\times 2})(\hat{\bp}_t)$ is given as
    \begin{equation}\label{eq:v0v1}
    \begin{blockarray}{ccccc}
    v_0&v_1&v_2&  \cdots&v_n\\
        \begin{block}{(ccccc)}
         t\bd^* & -t\bd^* & 0 & \ldots&0 \\
        \end{block}
    \end{blockarray}
    \end{equation}
    Let $R_t$ be the matrix in which the row corresponding to $v_0 v_1$ in $J(\pi_{G'}\circ f_p^{\times 2})(\hat{\bp}_t)$ is divided by $t$.
    Note that $R_0$ is well-defined by (\ref{eq:v0v1}).
    Since $(G'-v_0v_1,\hat{\bp}_0)$ is infinitesimally rigid by Claim~\ref{claim:1}, $\rank R_0=2|V(G')|-2$.
    Since the entries of $R_t$ are rational functions in $t$,
    there exists a real number $\bar{t}>0$ such that 
    $\rank R_t=2|V(G')|-2$ for all $t$ with $0\leq t < \bar{t}$.
    This also implies that 
    $\rank J(\pi_{G'}\circ f_p^{\times 2})(\hat{\bp}_t)=2|V(G')|-2$ 
    for any $t$ with $0<t<\bar{t}$.


    Next, we define an edge weight $\bar{\omega}_0:E(G')\rightarrow \mathbb{R}$ by 
    \begin{equation}\label{eq:last2}
        \bar{\omega}_0(e) = \begin{cases}
            -1 & (e = v_0 v_1)\\
            0 & (e = v_0 v_2)\\
            \omega^* (e) & (e \in E(G')\setminus \{v_0v_1, v_0v_2\}).
        \end{cases}
    \end{equation}
    With this definition, $\bar{\omega}_0$ is in the left kernel of $R_0$
    by the definition (\ref{eq:d*}) of $\bd^*$ and the fact that $\omega^*$ is a self-stress of $(G,\bp)$.
    Since the entries of $R_t$ are rational functions in $t$ 
    and the rank of $R_t$ is invariant over $0 \le t < \bar{t}$, 
    we can extend $\bar{\omega}_0$ to a family of maps $\bar{\omega}_t: E(G')\rightarrow \mathbb{R}$ continuous in $t$ over $[0, \bar{t})$ such that $\bar{\omega}_t$ is in the left kernel of $R_t$.
    Recall that $J(\pi_{G'}\circ f_p^{\times 2})(\hat{\bp}_t)$ is obtained from $R_t$ by multiplying the row of $v_0v_1$ by $t$.
    So, if we define $\hat{\omega}_t: E(G')\rightarrow\mathbb{R}$ by 
    \begin{equation}\label{eq:hat_omega}
    \hat{\omega}_t(e)=\begin{cases}
    \frac{1}{t}\bar{\omega}_t(e) & (e=v_0v_1) \\
    \bar{\omega}_t(e) & (e\in E(G')\setminus \{v_0v_1\}),
    \end{cases}
    \end{equation}
    then we obtain a family of maps $\hat{\omega}_t$ continuous in $t$ over $(0, \bar{t})$ such that $\hat{\omega}_t$ is in the left kernel of $J(\pi_{G'}\circ f_p^{\times 2})(\hat{\bp}_t)$,
    i.e., $\hat{\omega}_t$ is a self-stress of $(G',\hat{\bp}_t)$.
    
    It remains to prove $\rank L_{G',\hat{\omega}_t^1}=|V(G')|-2$
    for the first coordinated stress $\hat{\omega}_t^1$ for all sufficiently small $t>0$.
    For this, a key step is to relate $L_{G',\hat{\omega}_t^1}$
    with $L_{G,\omega^{*1}}$ (since we know $\rank L_{G,\omega^{*1}}=|V(G)|-2$ by the lemma assumption).
    Our strategy for establishing this relationship is to look at the Shur complement at the diagonal of $v_0$ in   $L_{G',\hat{\omega}_t^1}$.
    Hence, we first need to show that the diagonal at $v_0$ in  $L_{G',\hat{\omega}_t^1}$ is nonzero, which is the stratement of the next claim.
    
    \begin{claim}\label{claim:3}
    Let $\varepsilon_t=L_{G', \hat{\omega}_t^{1}}[v_0, v_0]$.
    Then $\lim_{t \to 0} \frac{1}{\varepsilon_t}=0$.
    \end{claim}
    \begin{proof}
    By the equilibrium condition of $\hat{\omega}$ at $v_0$ in $(G',\hat{\bp}_t)$, 
    $
    \bd_{\hat{\bp}_t,\hat{\omega}}^{p-1}(v_0,v_1)+
    \sum_{u\in N_0\cup \{v_2\}} \bd_{\hat{\bp}_t,\hat{\omega}}^{p-1}(v_0,u)=0.
    $
    By substituting the definition of $\hat{\bp}_t$, 
    \begin{equation}\label{eq:last1}
    \hat{\omega}_t(v_0 v_1) t \bd^*
         +\sum_{u \in N_0\cup \{v_2\}}\hat{\omega}_t(v_0 u)\left(\bp(u)  - \bp(v_0) - (t\bd^*)^{\frac{1}{p-1}}\right)^{p-1} = 0.
    \end{equation}
    By (\ref{eq:last2}) and (\ref{eq:hat_omega}), $\lim_{t\to 0} \hat{\omega}_t(v_0v_2)=\bar{\omega}_0(v_0v_2)=0$
    and $\lim_{t\to 0} \hat{\omega}_t(v_0u)=\bar{\omega}_0(v_0u)=\omega(v_0u)$
    for each $u\in N_0$.
    Hence, letting $t \to 0$, the second term of the left side of (\ref{eq:last1}) converges to
    $\bd^*$ .
    Hence, by (\ref{eq:last1}), 
    \begin{equation}\label{eq:last4}
        \lim_{t \to 0}t \hat{\omega}_t(v_0 v_1) = -1.
    \end{equation}

    Now we look at $\varepsilon_t:=L_{G', \hat{\omega}_t^{1}}[v_0,v_0]$.
    For each $u\in V(G')$, let $\hat{\bx}_t(u)$ be the first entry of $\hat{\bp}_t(u)$. Similarly, denote $\bx(u)$ to be the first entry of $\bp(u)$.
    Since $L_{G',\hat{\omega}_t^1}$ is a Laplacian,
    \begin{align}\nonumber
        \varepsilon_t=L_{G', \hat{\omega}_t^{1}}[v_0, v_0] &=
        \sum_{u\in N_0\cup\{v_1,v_2\}} \hat{\omega}_t^1(v_0u)\\
        & = \hat{\omega}_t(v_0 v_1)(\hat{\bx}_t(v_1) - \hat{\bx}_t(v_0))^{p-2}
        + \sum_{u \in N_0\cup \{v_2\}}\hat{\omega}_t(v_0 v_i)(\hat{\bx}_t(u) - \hat{\bx}_t(v_0))^{p-2}. \label{eq:last3}
    \end{align}
    Since each entry of $\hat{\bx}_t$ converges to a finite vector when $t\to 0$ and $\lim_{t\to 0} \hat{\omega}_t(v_0v_2)=0$ and $\lim_{t\to 0} \hat{\omega}_t(v_0u)=\omega(v_0u)$,
    the second term in the right side of (\ref{eq:last3}) converges to a finite value when $t\to 0$.
    On the other hand, the first term of the right side of (\ref{eq:last3}) is 
    \begin{equation}\label{eq:last5}
        \hat{\omega}_t(v_0 v_1)(\hat{\bx}_t(v_1)- \hat{\bx}_t(v_0))^{p-2}
        = \hat{\omega}_t(v_0 v_1) (td_1^*)^{\frac{p-2}{p-1}} = (t \hat{\omega}_t(v_0 v_1))t^{-\frac{1}{p-1}} (d_1^*)^{\frac{p-2}{p-1}},
    \end{equation}
    where $d_1^*$ denotes the first entry of $\bd^*$.
    By (\ref{eq:last4}), the right side of (\ref{eq:last5}) diverges when $t\to 0$.
    Hence, by (\ref{eq:last3}), $\varepsilon_t$ also diverges when $t\to 0$.
    This completes the proof.
    \end{proof}
    
    Denote 
    \[
    s_t:= \sum_{u \in N_0\cup \{v_2\}} L_{G',{\hat{\omega}_t}^{1}}[v_0,u]=\sum_{u \in N_0\cup \{v_2\}}-\hat{\omega_t}^{1}(v_0 u).
    \]
    Since $L_{G',{\hat{\omega}_t}^{1}}$  is a Laplacian, the sum of the entries of the row of $v_0$ is equal to zero, which gives 
    \[
    L_{G',{\hat{\omega}_t}^{1}}[v_0, v_1]=-\varepsilon_t-s_t.
    \]
    Also, denote by $\gamma_t$ the vector of dimension $|N_0\cup \{v_2\}|$ obtained by aligning 
    $L_{G',{\hat{\omega}_t}^{1}}[v_0,u]$ for $u\in N_0\cup \{v_2\}$.
    Then $L_{G',{\hat{\omega}_t}^{1}}$ can be written as
    \begin{equation}\label{eq: laplacian of G'}
       L_{G', \hat{\omega}_t^{1}}
       = \left(
         \begin{array}{c|c}
            0&  O\\
             \hline
             O & L_{G' - v_0, \hat{\omega}_t^{1}}
         \end{array}
        \right)
       + \begin{blockarray}{ccccc}
           \, & v_0 & v_1  & N_0\cup\{v_2\} & Y\\
           \begin{block}{c(ccc|c)}
               v_0 & \varepsilon_t & -\varepsilon_t - s_t & \gamma_t &  0\\
               v_1 & -\varepsilon_t - s_t & \varepsilon_t + s_t & 0  & 0 \\
               N_0\cup\{v_2\} & \gamma_t^{\top}  &  0 & -\mathrm{diag}(\gamma_t) & 0  \\\cline{2-5}
               Y   & 0 & 0 & 0  & 0 \\
           \end{block}
       \end{blockarray}
    \end{equation}
    where the first term is the contribution of the edges in $G'-v_0$ whereas the second term is the contribution of the edges incident to $v_0$ in $G'$.
    Note that $s_t$ and $\gamma_t$ converge to a finite value and a finite vector, respectively, when $t\to 0$.
    Hence, if we take the Shur complement of (\ref{eq: laplacian of G'}) at the left-top corner, then the resulting matrix $L_{G', {\omega'_t}^{1}}/v_0$ becomes
    \begin{align}\label{eq:last6} 
        L_{G', \hat{\omega}_t^{1}}/v_0
        &= L_{G' - v_0, \hat{\omega}_t^{1}}
        +\begin{blockarray}{cccc}
            \, & v_1 &   N_0\cup\{v_2\} & Y \\
            \begin{block}{c(cc|c)}
            v_1 & -s_t &  \gamma_t & 0\\
            N_0\cup\{v_2\} & \gamma_t &  -\mathrm{diag}(\gamma_t) & 0 \\\cline{2-4}
            Y & 0 & 0  & 0 \\
            \end{block}
        \end{blockarray}+ O\left(\frac{1}{\varepsilon_t}\right).
    \end{align}
    We now show that $L_{G', \hat{\omega}_t^{1}}/v_0$ converges to  $L_{G,\omega^{*1}}$ when $t\to 0$.
    To see this, observe first that, $G'-v_0$ is isomorphic to the graph $H$ obtained from $G$ by removing the edges $v_1u$ for all $u\in N_0$.
    Hence, by (\ref{eq:last2}), 
    the first term of the right side of (\ref{eq:last6}) converges to $L_{H,\omega^{*1}}$.
    The second term of the right side of (\ref{eq:last6}) converges to 
    the Laplacian corresponding to the edges $v_1u$ for all $u\in N_0$
    since 
    $\lim_{t\to 0} -s_t=\sum_{u\in N_0\cup \{v_2\}} \bar{\omega}_0^1(v_0u)=\sum_{u\in N_0} \omega^{*1}(v_0u)$ by (\ref{eq:last2})
    and the fact that the entry of $\gamma_t$ indexed by $u\in N_0\cup \{v_2\}$ converges to $L_{G,\omega^{*1}}[v_1,u]$ if $u\in N_0$ or zero if $u=v_2$ by (\ref{eq:last2}).
    Therefore, by Claim~\ref{claim:3}, the right side of (\ref{eq:last6}) converges to $L_{G,\omega^{*1}}$ when $t\to 0$,
    and we obtain 
    \begin{equation}\label{eq:last7}
    \lim_{t\to 0} L_{G', \hat{\omega}_t^{1}}/v_0 =L_{G,\omega^{*1}}.
    \end{equation}

    By the lemma assumption, $\rank L_{G,\omega^{*1}}=|V(G')|-2$.
    Hence, by (\ref{eq:last7}), $\rank L_{G', \hat{\omega}_t^{1}}/v_0=|V(G')|-2$ for any sufficiently small $t>0$.
    Claim~\ref{claim:3} also implies that $L_{G', \hat{\omega}_t^{1}}[v_0,v_0]=\varepsilon_t$ is nonzero for any sufficiently small $t>0$. 
    Thus, $\rank L_{G', \hat{\omega}_t^{1}}=\rank L_{G', \hat{\omega}_t^{1}}/v_0+1=|V(G)|-1=|V(G')|-2$ for any sufficiently small $t>0$.
    This completes the proof.
    \end{proof}

We are now ready to complete the proof of Theorem~\ref{thm:sugiyama}.

\begin{proof}[Proof of Theorem \ref{thm:sugiyama}]
Suppose $G$ is 2-connected.
Then, by Theorem \ref{thm:DHN (2,2)-connected}, $G$ can be constructed inductively 
from $K_5^-$ or $B_1$ by $K_4$-extensions and generalised vertex splittings keeping 
2-connectivity and redundant 2-tree-connectivity.
For $K_5^-$ and $B_1$, the theorem holds by Lemma~\ref{lem: base case of 2-dim}.
In general, the theorem holds by Lemmas~\ref{lem:K4-extension} and~\ref{lem: generalised vertex split}.

Finally, we consider a general case.
The proof is identical to that of Theorem~\ref{thm:laplacian of 1-dim}.
Note that (redundant) 2-tree-connectivity implies 2-edge-connectivity.
\end{proof}

Based on Theorem \ref{thm:sugiyama}, we now complete the proof of our main theorem, Theorem~\ref{thm:main}.
We actually prove the following stronger statement.
\begin{theorem}\label{thm:last_main}
Let $p$ be an even positive integer with $p\neq 2$, $n\geq 3$ be a positive integer, and $G$ be a connected graph with $n$ vertices.
Then the following are equivalent.
\begin{itemize}
\item[(i)] A/every generic 2-dimensional framework $(G,\bp)$ is globally rigid in $\ell_p^2$.
\item[(ii)] $\overline{\pi_G({\cal CM}^p_n)}$ is $2$-identifiable.
\item[(iii)] $\overline{\pi_G({\cal CM}^p_n)}$ is not  2-tangentially weakly defective.
\item[(iv)] A generic 2-dimensional framework $(G,\bp)$ has a $k$-th coordinated self-stress $\omega^k$ such that $\rank L_{G,\omega^k}=n-2$ for all $k$.
\item[(v)] $G$ is 2-connected and redundantly two-tree-connected.
\end{itemize}
\end{theorem}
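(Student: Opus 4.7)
The plan is to prove the five-way equivalence as a single cycle
\[
(\text{v}) \Longrightarrow (\text{iv}) \Longrightarrow (\text{iii}) \Longrightarrow (\text{ii}) \Longrightarrow (\text{i}) \Longrightarrow (\text{v}),
\]
in which every arrow is essentially a direct invocation of a result already in the paper. The content of the theorem is really that these five a priori different conditions coincide; almost no new calculation is needed, and the job consists in checking that each cited result applies with the correct quantifiers (in particular for the ``a/every'' clauses in (i) and (iv)) and under the standing hypothesis that $p$ is even with $p\ne 2$ and $n\ge 3$.

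For (v)$\Rightarrow$(iv), since $G$ is 2-connected, the number of 2-connected components is $a=1$, so Theorem~\ref{thm:sugiyama} produces a self-stress $\omega$ of a generic $(G,\bp)$ with
$\rank L_{G,\omega^k}=n-a-1=n-2$ simultaneously for $k\in\{1,2\}$; note that $\omega$ itself plays the role of a ``single'' stress giving both coordinated stresses $\omega^1,\omega^2$ of the required rank. For (iv)$\Rightarrow$(iii), we apply Lemma~\ref{lem:COV_test} with $d=2$: the rank condition on $L_{G,\omega^k}$ for all $k$ is precisely the hypothesis of the lemma, and its conclusion is that $\overline{\pi_G(\mathcal{CM}_n^p)}$ is not 2-tangentially weakly defective. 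For (iii)$\Rightarrow$(ii), we appeal to Proposition~\ref{prop:twnd_iden}, which says non-2-tangential-weak-defectiveness implies 2-identifiability for any connected graph with $n\ge3$. For (ii)$\Rightarrow$(i), we invoke Proposition~\ref{prop:iden_global} with $d=2$: 2-identifiability implies that every generic framework $(G,\bp)$ in $\ell_p^2$ is globally rigid. Finally, (i)$\Rightarrow$(v) is precisely the content of Theorem~\ref{thm:necessity} (Dewar--Hewetson--Nixon): global rigidity of some generic framework forces $G$ to be 2-connected and redundantly 2-tree-connected.

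The one issue that requires a brief justification is the ``a/every'' clause appearing in (i) and (iv). For (iv), Proposition~\ref{prop: avoiding generic construction} ($\ell_p$-analogue of Connelly--Whiteley) guarantees that the rank of $L_{G,\omega^k}$ is maximised at generic $\bp$ and generic self-stress, and the maximum is invariant under permutations of the coordinate indices $k$; hence ``a generic framework has such a self-stress'' and ``every generic framework has such a self-stress with $\rank=n-2$ for every $k$'' are equivalent formulations, so the implication (v)$\Rightarrow$(iv) in fact produces the stronger ``every'' version while (iv)$\Rightarrow$(iii) needs only the weaker ``a'' version. For (i), note that conditions (ii)--(v) depend only on the graph $G$, not on the choice of $\bp$, so once the cycle is closed, (i) also reduces to a graph property; in particular ``a generic realisation is globally rigid'' and ``every generic realisation is globally rigid'' coincide. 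Since no step in the cycle presents a genuine obstacle beyond this quantifier bookkeeping, once Theorem~\ref{thm:sugiyama} is in hand the proof of Theorem~\ref{thm:last_main} is essentially a matter of assembling previously established pieces; the real substance of the paper is already contained in Theorem~\ref{thm:sugiyama} and Lemma~\ref{lem:COV_test}.
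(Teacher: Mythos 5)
Your proof is correct and follows essentially the same route as the paper, which closes the identical cycle (v)$\Rightarrow$(iv)$\Rightarrow$(iii)$\Rightarrow$(ii)$\Rightarrow$(i)$\Rightarrow$(v) by invoking Theorem~\ref{thm:sugiyama}, Lemma~\ref{lem:COV_test}, Proposition~\ref{prop:twnd_iden}, Proposition~\ref{prop:iden_global}, and the Dewar--Hewetson--Nixon necessity result. In fact, your citation of Theorem~\ref{thm:necessity} for (i)$\Rightarrow$(v) is the appropriate one --- the paper's text cites Theorem~\ref{thm:GHT} at that step, which appears to be a reference slip.
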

\begin{proof}
(i) implies (v) by Theorem~\ref{thm:GHT}.
(v) implies (iv) by Theorem~\ref{thm:sugiyama}.
(iv) implies (iii) by Lemma~\ref{lem:COV_test}.
(iii) implies (ii) by Proposition~\ref{prop:twnd_iden}.
(ii) implies (i) by Proposition~\ref{prop:iden_global}.
\end{proof}

An obvious open problem is to extend the result to the  general dimensional case.
\begin{conj}\label{conj:last_main}
Let $p$ be an even positive integer with $p\neq 2$, $n\geq 3$ be a positive integer, and $G$ be a connected graph with $n$ vertices.
Then the following are equivalent.
\begin{itemize}
\item[(i)] A/every generic $d$-dimensional framework $(G,\bp)$ is globally rigid in $\ell_p^d$.
\item[(ii)] $\overline{\pi_G({\cal CM}^p_n)}$ is $d$-identifiable.
\item[(iii)] $\overline{\pi_G({\cal CM}^p_n)}$ is not $d$-tangentially weakly defective.
\item[(iv)] A generic $d$-dimensional framework $(G,\bp)$ has a $k$-th coordinated self-stress $\omega^k$ such that $\rank L_{G,\omega^k}=n-2$ for all $k$.
\item[(v)] $G$ is 2-connected and redundantly $d$-tree-connected.
\end{itemize}
\end{conj}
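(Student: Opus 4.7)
The proof would follow the cyclic strategy of Theorem~\ref{thm:last_main}. Four of the five implications extend to general dimension $d$ without change: (i)$\Rightarrow$(v) is exactly Theorem~\ref{thm:necessity}, which is already stated for arbitrary $d$; (iv)$\Rightarrow$(iii) is Lemma~\ref{lem:COV_test}, also valid in every dimension; (iii)$\Rightarrow$(ii) is Proposition~\ref{prop:twnd_iden}; and (ii)$\Rightarrow$(i) combines Proposition~\ref{prop:iden_connectivity} and Proposition~\ref{prop:iden_global}, both proved for all $d \geq 2$. Thus the entire content of the conjecture reduces to proving the $d$-dimensional analogue of Theorem~\ref{thm:sugiyama}: every 2-connected, redundantly $d$-tree-connected graph $G$ with $n$ vertices admits a generic realisation $(G,\bp)$ in $\ell_p^d$ carrying a self-stress $\omega$ with $\rank L_{G,\omega^k} = n-2$ for every coordinate $k$.

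To establish this implication I would follow the three-step template of the $d=2$ proof. First, obtain an inductive construction of the target class: a finite list of base graphs together with a finite set of operations under which the class of 2-connected, redundantly $d$-tree-connected graphs is closed and from which every such graph can be generated. Second, verify the coordinated-Laplacian rank condition on each base case by an explicit computation in the spirit of Lemma~\ref{lem: base case of 2-dim}, then invoke Proposition~\ref{prop: avoiding generic construction} to promote the single witness to every generic realisation. Third, prove higher-dimensional analogues of Lemma~\ref{lem:K4-extension} and Lemma~\ref{lem: generalised vertex split} showing that each operation preserves the coordinated stress-rank condition. The generalised-vertex-split argument is the more delicate of the two but should carry over, since its engine is a $v_0 v_1$-coincidence rigidity fact together with a Schur-complement limit; both should admit $d$-dimensional analogues, provided the coincidence-rigidity characterisation of Dewar--Hewetson--Nixon extends to $\ell_p^d$. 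The $K_4^-$-extension generalises in the expected way, using that $K_{d+2}$ placed on the vertices of an axis-aligned $d$-dimensional box is dependent in $\ell_p^d$.

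The principal obstacle, by a wide margin, is the first step. No inductive characterisation of 2-connected, redundantly $d$-tree-connected graphs is known for $d \geq 3$; this is a long-standing open problem in combinatorial rigidity, and the class may well require new operations beyond the two listed above. Two possible circumventions deserve exploration. The first is a coning approach: if an $\ell_p$-analogue of Whiteley's coning theorem could be proved, asserting that generic global rigidity lifts through the addition of a universally-connected vertex, then Theorem~\ref{thm:suff} would reduce (v)$\Rightarrow$(iv) to a local-rigidity question in dimension $d+1$, accessible through the programme of Kitson and Power. The second is to bypass the induction entirely by exhibiting, for any 2-connected, redundantly $d$-tree-connected graph $G$, an explicit (possibly non-generic) realisation whose coordinated stress matrix is of rank $n-2$ by direct Schur-complement analysis in the spirit of Lemma~\ref{lem:laplacian_property}. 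Absent one of these routes, or a genuinely new combinatorial idea, Conjecture~\ref{conj:last_main} will remain out of reach.
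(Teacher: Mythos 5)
You are addressing Conjecture~\ref{conj:last_main}, which the paper itself leaves unproved and explicitly poses as an open problem, so there is no proof of the authors to compare yours against. Your submission is accordingly a research plan rather than a proof, and it is candid about that; the bottleneck you single out --- the absence of any inductive construction of $2$-connected, redundantly $d$-tree-connected graphs for $d\geq 3$ playing the role of Theorem~\ref{thm:DHN (2,2)-connected}, which would be needed to push Theorem~\ref{thm:sugiyama} beyond the plane --- is indeed the obstruction that confines the paper's Theorem~\ref{thm:last_main} to $d=2$.

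However, your reduction of the conjecture to this single missing step overstates what the general-dimensional results in the paper provide. The implications (iv)$\Rightarrow$(iii)$\Rightarrow$(ii)$\Rightarrow$(i) do hold for every $d$ (Lemma~\ref{lem:COV_test}, Proposition~\ref{prop:twnd_iden}, Proposition~\ref{prop:iden_global}), but (i)$\Rightarrow$(v) is \emph{not} ``exactly Theorem~\ref{thm:necessity}'': that theorem concludes only that $G$ is $2$-connected and redundantly $2$-tree-connected, whereas item (v) of the conjecture demands redundant $d$-tree-connectivity, strictly stronger for $d\geq 3$. Obtaining it from a Hendrickson-type argument (global rigidity implies redundant local rigidity in $\ell_p^d$) would in addition require the Kitson--Power conjecture that $d$-tree-connectivity characterises generic local rigidity in $\ell_p^d$, which the paper notes is still open; so the conjecture conceals at least two open combinatorial problems, not one. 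Finally, your proposed coning-style circumvention cannot close the cycle as stated: Theorem~\ref{thm:suff} assumes local rigidity in $\ell_p^{d+1}$, which forces roughly $(d+1)(n-1)$ edges, while a redundantly $d$-tree-connected graph may have as few as $d(n-1)+1$; the sufficient condition of Theorem~\ref{thm:suff} therefore covers only a proper subclass of the graphs in (v) and cannot by itself yield (v)$\Rightarrow$(iv) or (v)$\Rightarrow$(i).
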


Another natural open problem is to give a characterization in the case where $p$ is odd.
As we remarked at the end of Section~\ref{subsec:cayley-menger},
there is a gap between the $p$-Cayley Menger variety defined in 
Section~\ref{subsec:cayley-menger}) and the image of the $\ell_p$-distance measurement map when $p$ is odd,
 and we are currently missing Theorem~\ref{thm:necessity} and  Proposition~\ref{prop:iden_global}.
Extending the techniques developed in this paper to settings where a rigidity map is a piecewise polynomial (including the $\ell_p^d$-distance measurement map) is an interesting open problem.
See~\cite{dewar2025uniquely} for recent developments concerning polynomial-norm measurement maps.

\paragraph{Acknowledgement} 
We thank the anonymous referee for several suggestions that improved the paper.
We also thank D{\'a}niel Garamv{\"o}lgyi for his careful reading of the manuscript and constructive comments.
The work was supported by 
JST PRESTO Grant Number JPMJPR2126.

\bibliographystyle{abbrv}
\bibliography{reference.bib}

\end{document}